\newcommand{\expr}[1]{\left( #1 \right)}
\newcommand{\pr}{\mathbf{P}}
\newcommand{\eps}{\varepsilon}
\newtheorem{thm}{Theorem}[section]
\newtheorem{lem}[thm]{Lemma}
\newtheorem{prop}[thm]{Proposition}
\newtheorem{rem}[thm]{Remark}
\newcommand{\pref}[1]{(\ref{#1})}
\newcommand{\Rd}{\mathbb{R}^d}
\DeclareMathOperator{\im}{Im}
\title{Potential theory of one-dimensional geometric stable processes}
\author{ T. Grzywny, M. Ryznar \\
 Institute of Mathematics and Computer Sciences,\\
  Wroc\l{}aw University of Technology, Poland}
\date{}
\begin{document}
\maketitle

\begin{abstract}
The purpose of this paper is to find optimal estimates for the
Green function and the Poisson kernel for a half-line and  intervals of {\it the geometric
stable process} with parameter $\alpha\in(0,2]$. This process has an infinitesimal generator of the form
 $-\log(1+(-\Delta)^{\alpha/2})$. As an application we prove the scale invariant Harnack inequality as well as the boundary Harnack principle.
\end{abstract}

\textbf{Keywords:} {geometric stable process, Green function,
first exit time, tail function, Poisson kernel, L\'evy process, ladder height process, renewal function}\\

\textbf{Mathematics Subject Classifications (2000):} {60J45}

\section{Introduction}

 Let $B = (B_t , t \ge 0)$ be a Brownian motion in $\Rd$ and $ T = (T_t : t \ge 0)$ be
a subordinator independent of $ B$. The process $X = (X_t : t \ge 0)$ defined by $X_t = B_{T_t}$ is a
rotationally invariant L\'{e}vy process in $\Rd$ and is called a subordinate Brownian motion.
The subordinator $T$ used to define the subordinate Brownian motion $X$ can be interpreted as
{\em operational} time or {\em intrinsic} time. For this reason, subordinate Brownian motions have been
used in mathematical finance and other applied fields.

 Let $\psi$ denote
the Laplace exponent of the subordinator $T$, that is,
$$E \exp\{-\lambda T_t\} = exp\{-t\psi(\lambda)\}.$$
 Then
the characteristic exponent $\Phi$ of the subordinate Brownian motion $X$ takes on a very simple form
$\Phi(x) =\psi(|x|^2)$ (our Brownian motion B runs at twice the usual speed). Therefore, properties of
$X$ should follow from properties of the Laplace exponent of the subordinator.

A lot of progress has been made in recent years in the study of the potential theory of subordinate
Brownian motions, see, for instance \cite{  RSV,  SV, KSV2, BBKRSV,  KSV3,  KSV}. At first,
the focus was on the potential theory of the process $X$ in the whole of $\Rd$, and basic results about the behaviour of the potential kernel and L\'{e}vy measures   were established for many particular examples of the subordinators including geometric stable (see  \cite{RSV, SV}). Then in  a natural path of  investigation    the
(killed) subordinate Brownian motion in an open subset was explored.   In the last few
years significant progress has been made in studying the potential theory of subordinate Brownian
motion killed upon exiting an open subset of $\Rd$ (see the survey \cite{KSV}). The main results include the Harnack inequality,  the boundary Harnack
principle and sharp Green function estimates.  However, such results were confined to the subordinated Brownian motions obtained by using $\psi$ not only being a complete Bernstein  function but also satisfying  certain property,
\begin{equation}\label{condition1}\psi(\lambda)\sim \lambda^{\beta}  l(\lambda), \lambda \to \infty,\end{equation}
where  $0<\beta<1$, and $l$ is a slowly varying function at $\infty$. Moreover, an extra assumption was set on $\psi$ to avoid a situation when the process $X$ is recurrent. In a recent paper \cite{KSV} the condition (\ref{condition1}) was  relaxed to comparability at $\infty$.

A natural question  about the Harnack inequality,  the boundary Harnack
principle and sharp Green function estimates arises in  the case  $\beta=0$ and without the  transience assumption. In this note we do not attempt to investigate a general such case (i.e. $\beta=0$), but we rather consider an important particular process, that is a geometric $\alpha$-stable process on the real line. For this process the corresponding subordinator has the Laplace exponent $\psi(\lambda)=\log(1+\lambda^{\alpha/2})$, $0<\alpha\le 2$. For $\alpha =2$ it is also called the  gamma variance process. The geometric $\alpha$-stable processes have been treated in the literature and play important role in the theory and applications (see e.g. \cite{MR}). Some  potential theory of them were  established in \cite{SV}, but to the best of our knowledge none sharp estimates  of the Green functions and Poisson kernels of open subsets, even in the one-dimensional case, are known.

Our main results are sharp estimates of the Green functions and Poisson kernels of  intervals (including a half-line), scale invariant Harnack inequality and the boundary Harnack inequality for harmonic functions on intervals.   It is worth mentioning that   our estimates take into account the size of intervals and the constants  depend only on the characteristics of the process, when Green functions and Poisson kernels are regarded. For example, we show that Poisson kernels for  half-lines for $\alpha$-stable and geometric $\alpha$-stable processes  are of the same order provided the starting point and the  exit point are  away from the boundary, if $0<\alpha<2$. On the other hand for starting points and exit points close to the boundary we have the same type of behaviour of the Poisson kernels for all $0<\alpha \le 2$.

\section{Preliminaries}
\setcounter{equation}{0}

    Throughout the paper  by $c, c_1\,\dots$ we  denote
       nonnegative constants which may depend on other constant parameters only.
        The value of $c$ or $c_1\,\ldots$ may change from line to line in a chain
         of estimates. If we use $C$ or $C_1,\, \dots$ then they are fixed constants.

      The notion $p(u)\approx q(u),\ u \in A$ means that the ratio
      $p(u)/ q(u),\ u \in A$ is bounded from  below and above
      by positive constants which may depend on other constant parameters only but does not depend on the set $A$.

We present in this section some basic material regarding the
geometric stable process. For more  detailed
information, see \cite{SV}. For questions regarding the
Markov and the strong Markov properties, semigroup properties,
Schr\"{o}dinger operators and basic potential theory, the reader
is referred to \cite{ChZ} and \cite{BG}.

We first introduce an appropriate class of subordinating
processes. As mentioned in the Introduction the geometric $\alpha$-stable process is obtained by subordination of the Brownian motion with a subordinator having the Laplace exponent $\psi(\lambda)=\log(1+\lambda^{\alpha/2})$, $0<\alpha\leq2$. The resulting process has the L\'{e}vy-Khintchine exponent $\Psi(x) = \psi(|x|^2)= \log(1+|x|^\alpha)$. Another way of constructing the  geometric $\alpha$-stable process is to subordinate the rotational invariant $\alpha$-stable process with the Gamma subordinator.
 Let $g_t(u)=\Gamma(t)^{-1}e^{-u}u^{t-1}$, $u, t>0$, denote the density function
of the Gamma subordinator $T_t$, with the Laplace transform \begin{equation}\label{gammaLaplace}Ee^{-\lambda T_t}=e^{-t
\log(1+\lambda)}.\end{equation}

 Let $Y^{\alpha}_t$ be the isotropic $\alpha$-stable process in $\Rd$ with the characteristic
  function of the form
 \begin{equation} \label{brownian}
 E^{0}e^{i\xi \cdot Y_t} = e^{-t|\xi|^\alpha}\,.
 \end{equation}
 Assume that the processes $T_t$ and $Y_t$ are stochastically independent.
 Then the process $X_t^{\alpha}= Y^{\alpha}_{T_t}$ is called the
  geometric stable process.
  In the sequel we use the generic notation $X_t$ instead of
   $X_t^{\alpha}$. From (\ref{gammaLaplace}) and  (\ref{brownian}) it is clear that
  the characteristic
  function of $X_t$ is  of the form
 $$ 
 E^{0}e^{i\xi \cdot X_t } = e^{-t\log(1+|\xi|^{\alpha})}\,.
 $$
In the case $\alpha=2$, i.e. $Y^2_t$ is a Brownian motion running twice the usual speed, the corresponding process is the symmetric gamma variance  process.

  $X_t$ is a L\'evy process (i.e. homogeneous, with independent
 increments). We always assume that sample paths of the process $X_t$ are right-continuous
 and have left-hand limits ("cadlag"). Then $X_t$ is Markov and has the strong
 Markov property under the so-called standard filtration.


  The  geometric stable density   can now be computed in the following way:
  \begin{equation*} \label{reldensity0}
 p_t(x)=\int_0^\infty s_u(x) g_t(u) du,
 \end{equation*}
 where $s_u(x)=\frac1{(2\pi)^d }\int_{\mathbb{R}^d}e^{-ix\xi- u{|\xi|^\alpha}}d\xi$ is  the density of the isotropic $\alpha$-stable process, defined by \pref{brownian}.
 In general potential theory  a very important role is played by
potential  kernels, which are
defined as

$$U(x,y)=\int_0^\infty p_t(x-y)dt, \ x,y\in \Rd,$$
if the defining integral above is finite. For the geometric process the potential
kernel is well defined for $d>\alpha$ but contrary to the stable case
 it is not expressible as an elementary function.
Recall that for the isotropic $\alpha$-stable process the
potential kernel is equal to $C|x-y|^{\alpha-d}$ for $d>\alpha$,
where $C$ is an appropriate constant depending on $\alpha$ and $d$.
 Nevertheless the asymptotic behaviour  of the potential kernel was   established in   \cite{SV}:

 \begin{equation}\label{0-potential}
  U(x-y)\approx  \frac1 {|x-y|^{d}\log^2(1+|x-y|^{-\alpha/2})},\quad x,y\in \Rd.
  \end{equation}
Note that (\ref{0-potential})  suggests that the process globally
 behaves like a stable one since  its potential kernel is asymptotically equivalent to the stable process, when $|x-y|$ is large.

We also recall the form of the density function $\nu(x)$ of the
L{\'e}vy measure of the
 geometric stable process:
  \begin{equation*}
  \nu(x)=  \int_0^\infty s_u(x)\, u^{-1}e^{-u} du.\label{levymeasure0}
	\end{equation*}
	The behaviour of the L{\'e}vy measure was investigated in \cite{SV}. We recall that result for the $d$-dimensional case, however  we need them only for $d=1$ in the present paper.
	For $\alpha=2$  we have
	\begin{equation*}
	\nu(x)\approx \frac {1+|x|^{(d-1)/2}}{|x|^{d}}e^{-|x|},\label{levymeasure3}
	\end{equation*}
	and for $0<\alpha<2$
	\begin{equation*}
	\nu(x)\approx \frac1{|x|^d(1+|x|^{\alpha})}.\label{levymeasure}
	\end{equation*}
	For $d=1$, which is the case investigated in this paper, for $\alpha=2$ we even have an exact formula
  \begin{equation}
	\nu(x)=|x|^{-1}e^{-|x|}.\label{levymeasure2}
	\end{equation}


    The {\it first exit time} of an (open)
   set  $D\subset {\Rd}$
   by the process $X_t$ is defined by the formula
   $$
   \tau_{D}=\inf\{t> 0;\, X_t\notin D\}\,.
   $$
The fundamental object of the potential theory is the {\it killed process} $X_t^D$
  when exiting the set $D$. It is defined in terms of sample paths up to time $\tau_D$.
  More precisely, we have the following "change of variables" formula:
  $$
  E^x f(X_t^D) =  E^x[t<\tau_D; f(X_t)]\,,\quad t>0\,.
  $$
  The density function of the transition probability of the process $X_t^D$ is denoted
  by $p_t^{D}$. We have
  \begin{equation*}
  p_t^{D}(x,y) = p_t(x-y) -
   E^x[t> \tau_D; p_{ t-\tau_D}(X_{\tau_D}-y)]    \,, \quad x, y \in {\Rd}\,.\label{density100}
  \end{equation*}
  Obviously, we obtain
   $$
     p_t^{D}(x,y) \le p_t(x,y) \,, \quad x, y \in {\Rd}\,.
   $$

  $(p_t^{D})_{t>0}$ is a strongly contractive semigroup (under composition) and shares most
  of properties of the semigroup $ p_t$. In particular, it is strongly Feller and
  symmetric: $ p_t^{D}(x,y) =  p_t^{D}(y,x)$.

  The potential kernel of the process $X_t^D$ is called the
  {\it Green function} of the set $D$ and is denoted by $G_D$. Thus, we have
  $$
   G_D(x,y)= \int_0^{\infty} p_t^{D}(x,y)\,dt\,.
  $$

  Another important  object in the potential theory of $X_t$ is the
  {\it harmonic measure}  of the
  set $D$. It is defined by the formula:
  $$ 
  P_D(x,A)=
  E^x[\tau_D<\infty; {\bf{1}}_A(X_{\tau_D})].
  $$
  The density kernel (with respect to the Lebesgue measure) of  the measure $P_D(x,A)$ (if it exists) is called the
  {\it Poisson kernel} of the set $D$.
  The relationship between the Green function of $D$ and the harmonic measure is provided by the Ikeda-Watanabe formula \cite{IW},
$$ 
  P_D(x,A)= \int_A \int_D G_D(x,y)\nu(y-z)dy dz, \quad A\subset (\bar{D})^c.
  $$
In the case which we investigate in this paper, that is when $D$ is an open interval or a half-line, the above formula holds for any Borel $A\in D^c$.


Now we define harmonic and regular harmonic functions. Let $u$ be
a Borel measurable function on $\Rd$. We say that $u$ is {\em
harmonic} function in an open set $D\subset \Rd$ if
$$u(x)=E^xu(X_{\tau_B}), \quad x\in B,$$
for every bounded open set $B$ with the closure
$\overline{B}\subset D$. We say that $u$ is {\em regular harmonic}
if
$$u(x)=E^x[\tau_D<\infty; u(X_{\tau_D}))], \quad x\in D.$$


The following lemma provides a very useful lower bound for the Green function. Its proof
closely follows the approach used in  \cite{RSV}, where the bounds
on the potential kernels (Green functions for the whole $\Rd$)
were established for some special subordinated  Brownian motions
(in particular for  our process for $d >\alpha $). We omit the proof, since one can proceed exactly in the same way as in the proof of Lemma 2.11 in \cite{GR3}.
\begin{lem}    \label{potential_lower} For any open set $D\in {\Rd}$  we have
$$  G_D(x,y)\ge   G_{D}^{(\alpha)}(x,y),$$
where  $G_{D}^{(\alpha)}(x,y)$ is the Green function of $D$ for the isotropic
$\alpha$-stable process.
 \end{lem}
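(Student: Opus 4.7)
The plan is to follow the approach of \cite{RSV} (and of Lemma~2.11 in \cite{GR3}), exploiting that $X_t = Y^{\alpha}_{T_t}$ is a subordination of the isotropic $\alpha$-stable process $Y^{\alpha}$ by the independent Gamma subordinator $T$ with density $g_t(u)$. The first and technically crucial step is to establish the one-sided transition-density inequality
\begin{equation}\label{plan:key}
p_t^{D}(x,y) \ \ge\ \int_0^\infty p_u^{(\alpha),D}(x,y)\, g_t(u)\, du, \qquad t>0,\ x,y\in D,
\end{equation}
where $p_u^{(\alpha),D}$ denotes the transition density of $Y^{\alpha}$ killed on leaving $D$. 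The key observation is path-wise: since $s\mapsto T_s$ is non-decreasing, the range of $T$ over $[0,t]$ is contained in $[0,T_t]$, so $\{\tau_D^{Y^{\alpha}}>T_t\}\subseteq\{\tau_D^{X}>t\}$ --- indeed, if $Y^{\alpha}$ has not exited $D$ by time $T_t$, then $X_s=Y^{\alpha}_{T_s}\in D$ for every $s\le t$. Combining this inclusion with the independence of $Y^{\alpha}$ and $T$ gives \eqref{plan:key}.

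Next, I would integrate \eqref{plan:key} in $t$ and apply Tonelli to obtain
\[
G_D(x,y) \ \ge\ \int_0^\infty p_u^{(\alpha),D}(x,y)\, v(u)\, du,
\]
where $v(u)=\int_0^\infty g_t(u)\,dt$ is the potential (renewal) density of $T$. The task is thereby reduced to the pointwise bound $v(u)\ge 1$ for every $u>0$, since then the right-hand side is at least $\int_0^\infty p_u^{(\alpha),D}(x,y)\,du = G_D^{(\alpha)}(x,y)$.

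To prove $v\ge 1$, I would invoke the complete Bernstein structure of $\psi(\lambda)=\log(1+\lambda)$. Its L\'evy density $u^{-1}e^{-u}=\int_1^\infty e^{-su}\,ds$ is completely monotone as a Laplace transform of a positive measure, so $\psi$ is complete Bernstein and $1/\psi$ is a Stieltjes function. By Bernstein's theorem, the inverse Laplace transform $v$ of $1/\psi$ is completely monotone and in particular non-increasing. Karamata's Tauberian theorem, together with the monotone density theorem, identifies
\[
\lim_{u\to\infty} v(u) \ = \ \lim_{\lambda\to 0^+}\frac{\lambda}{\psi(\lambda)} \ = \ \frac{1}{\psi'(0^+)} \ = \ 1,
\]
so monotonicity of $v$ forces $v(u)\ge 1$ throughout $(0,\infty)$.

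The part I expect to be most delicate is the inequality \eqref{plan:key}: one must carefully distinguish the killed subordinate process $X^{D}$ from the (strictly larger) subordinate killed process, and \eqref{plan:key} captures precisely that $X$ may miss excursions of $Y^{\alpha}$ out of $D$ that occur across jumps of $T$, giving a genuine inequality in place of the usual subordination identity. By contrast, $v\ge 1$ is a soft consequence of the complete Bernstein theory, which is exactly the ingredient that underlies the whole-space lower bound quoted in \eqref{0-potential}.
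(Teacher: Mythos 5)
Your argument is correct and is precisely the approach the paper invokes by reference: it omits the proof, pointing to Lemma~2.11 of \cite{GR3} and to \cite{RSV}, where the same two ingredients appear --- the domination of the subordinate killed process by the killed subordinate process (your inequality for $p_t^D$), followed by the lower bound $v(u)\ge 1$ for the potential density of the Gamma subordinator via complete monotonicity of $v$ and the Tauberian limit $\lim_{\lambda\to 0^+}\lambda/\log(1+\lambda)=1$. No gaps; the delicate step you flag is handled correctly by the pathwise inclusion $\{\tau_D^{Y^\alpha}>T_t\}\subseteq\{\tau_D^{X}>t\}$ together with independence of $Y^\alpha$ and $T$.
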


\section{Properties of the  exit time from interval}

Now, we briefly recall the basic notions of the fluctuation theory for L{\'e}vy processes. For the general account of this theory we refer the reader to \cite{Doney}.  Suppose that $X_t$ is a general one-dimensional  L{\'e}vy process.  Let $L_t$ be the local time of the process $X_t$ reflected at its supremum $M_t=sup_{s\le t}X_s$, and denote by $L^{-1}_s$ the right-continuous inverse of $L_t$, the ascending ladder time process for $X_t$. This is a (possibly killed) subordinator, and $H_s = X(L^{-1}_s) = M(L^{-1}_s)$ is another (possibly killed) subordinator, called the ascending ladder-height process. The Laplace exponent of the increasing ladder process, that is, the (possibly killed) bivariate subordinator $(L^{-1}_s, H_s)$ ($s < L(\infty)$), is denoted by $\kappa(z, \xi)$,
$$ \kappa(z,\xi) = c\exp\left(\int_0^\infty \int_{[0,\infty)}(e^{-t}-e^{-zt-\xi x})t^{-1}P(X_t\in dx )dt\right),
$$
where $c$ is a normalization constant of the  local time. Since our results are not affected by the choice of $c$ we assume that $c = 1$.

Moreover, if $X_t$ is not a compound Poisson process, then by~\cite{bib:f74}, Corollary~9.7,
$$
 \kappa(0, \xi)  = \exp\expr{\frac{1}{\pi} \int_0^\infty \frac{\xi \log \Psi(\zeta)}{\xi^2 + \zeta^2} \, d\zeta} = \psi^\dagger(\xi) ,
$$
where $\Psi(\xi)$ is the L\'{e}vy exponent of $X_t$.
By  $V(x) = \int_0^{\infty}\pr(H_s \le x)ds$ we denote  the renewal function of the process $H_s$. It is well known that $V$ is subadditive, that is
$$V(x+y)\le V(x)+V(y), \quad x,y\ge 0.$$

The behaviour of the renewal function and its derivative is crucial for our purposes.   The following result was obtained independently in \cite{KMR} and \cite{KSV}. In \cite{KSV} the assumption on the process $X_t$ was a bit more restrictive.
\begin{prop}\label{prop:regvar}
Let $X_t$ be a symmetric L\'{e}vy process such that    its L\'{e}vy-Khintchine exponent  $\Psi(\theta)$  has the property that $\Psi(\theta)$,   $\theta^2/\Psi(\theta)$ are increasing functions.  Then
$$\psi^\dagger(\xi)\approx \sqrt{\Psi(\xi)}$$ and
 $$V(x)\approx \frac  1{\sqrt{\Psi(1/x)}}.$$
Suppose additionally that
  $\Psi(\theta)$ is  regularly varying at zero (at $\infty$) with positive exponent and $V'(x)$ is eventually monotone at infinity (at zero). Then,
  $$V'(x)\approx \frac  1{x\sqrt{\Psi(1/x)}}, \quad x\to \infty\quad (x\to 0).$$

\end{prop}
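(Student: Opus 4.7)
The plan is to prove the three estimates in order, exploiting the explicit integral representation of $\psi^\dagger$ and then transferring the resulting bound first to $V$ and finally to its derivative.

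For $\psi^\dagger(\xi)\approx\sqrt{\Psi(\xi)}$, I would use the identity $\tfrac{1}{\pi}\int_0^\infty \xi/(\xi^2+\zeta^2)\,d\zeta=\tfrac{1}{2}$, which allows me to subtract $\tfrac{1}{2}\log\Psi(\xi)$ inside the exponent in the formula for $\psi^\dagger$, thus reducing the claim to showing that
$$I(\xi):=\int_0^\infty \frac{\xi}{\xi^2+\zeta^2}\log\frac{\Psi(\zeta)}{\Psi(\xi)}\,d\zeta$$
is bounded uniformly in $\xi$. The assumption that $\Psi$ is increasing gives $\log(\Psi(\zeta)/\Psi(\xi))\ge 0$ for $\zeta\ge\xi$, while the monotonicity of $\zeta^2/\Psi(\zeta)$ gives $\Psi(\zeta)/\Psi(\xi)\le (\zeta/\xi)^2$ there; for $\zeta\le\xi$ the two assumptions yield the mirror bounds. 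Together they imply $|\log(\Psi(\zeta)/\Psi(\xi))|\le 2|\log(\zeta/\xi)|$, and after the substitution $u=\zeta/\xi$ one finds $|I(\xi)|\le 2\int_0^\infty |\log u|/(1+u^2)\,du$, a finite universal constant.

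To pass to $V$, I would use that the Laplace transform of $dV$ equals $1/\psi^\dagger(\xi)$. Since $V$ is non-decreasing,
$$1/\psi^\dagger(1/x)=\int_0^\infty e^{-y/x}\,dV(y)\ge e^{-1}V(x),$$
giving the upper estimate $V(x)\le e/\psi^\dagger(1/x)$. For the lower estimate I would split the Laplace integral over the intervals $((k-1)x,kx]$ and use the subadditivity $V(kx)\le kV(x)$ to obtain
$$1/\psi^\dagger(1/x)\le \sum_{k=1}^\infty e^{-(k-1)}V(kx)\le V(x)\sum_{k=1}^\infty ke^{-(k-1)}=CV(x).$$
Combined with the preceding step this yields $V(x)\approx 1/\sqrt{\Psi(1/x)}$.

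The bound on $V'$ is the subtlest step and is where I expect the main obstacle. Writing $\phi(x)=1/\sqrt{\Psi(1/x)}$, the additional hypothesis makes $\phi$ regularly varying with positive index $\gamma$ at the relevant endpoint, and a direct computation shows $\phi(x)/x$ matches the target bound $1/(x\sqrt{\Psi(1/x)})$. Assuming, say, that $V'$ is decreasing near the endpoint, the formula $V(kx)-V(x)=\int_x^{kx}V'(t)\,dt$ produces the sandwich $(k-1)x\,V'(kx)\le V(kx)-V(x)\le (k-1)x\,V'(x)$. With $k=2$ the upper half yields $V'(x)\le 2V(x)/x\le C\phi(x)/x$. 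The delicate part is the matching lower bound: because the constants in $V(x)\approx\phi(x)$ need not agree, a one-step comparison $V(2x)-V(x)\ge c\phi(x)$ can fail, so one cannot simply compare consecutive dyadic levels. I would circumvent this by choosing an integer $k$ so large that the regular variation of $\phi$ ensures $c_1\phi(kx)\ge 2c_2\phi(x)$ near the endpoint (with $c_1,c_2$ the two constants from the previous step), which forces $V(kx)-V(x)\ge c\phi(x)$ and hence $V'(x)\ge c\phi(x)/((k-1)x)$. The case of $V'$ increasing is entirely analogous with the two sides of the sandwich swapped.
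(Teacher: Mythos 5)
Your argument is correct, but note that the paper itself offers no proof of this proposition: it is quoted from the references [KMR] and [KSV], so there is no internal proof to compare against. Your three steps are exactly the standard argument from those sources: the identity $\frac{1}{\pi}\int_0^\infty \xi(\xi^2+\zeta^2)^{-1}d\zeta=\frac12$ plus the two monotonicity hypotheses giving $|\log(\Psi(\zeta)/\Psi(\xi))|\le 2|\log(\zeta/\xi)|$ and hence $\psi^\dagger\approx\sqrt{\Psi}$; the Laplace transform $\int_0^\infty e^{-\xi y}dV(y)=1/\psi^\dagger(\xi)$ combined with monotonicity and subadditivity of $V$; and the monotone-density step for $V'$, where the key point you correctly identify is that one must take $k$ large (using the positive index of regular variation) to absorb the mismatch between the two comparison constants in $V\approx\phi$.

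One small slip in the last step: the bound $V'(x)\le 2V(x)/x$ for eventually decreasing $V'$ comes from the \emph{lower} half of your sandwich applied on $[x/2,x]$, namely $(x/2)V'(x)\le V(x)-V(x/2)\le V(x)$, not from the upper half $V(kx)-V(x)\le (k-1)xV'(x)$ (which yields a lower bound on $V'(x)$ and is the inequality you then use for the matching lower estimate). The inequalities themselves are all available from the sandwich, so this is a labelling error rather than a gap; just make sure the point at which you evaluate lies in the region where $V'$ is monotone, which is automatic in the asymptotic regimes $x\to\infty$ and $x\to 0$.
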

In the case when  $\Psi(\theta)$ is  slowly varying at $\infty$ or $0$ the above proposition is of little help in estimating $V'(x)$ and we need to use another tool.  We will take advantage of the following result proved recently
  in \cite{KMR}.
\begin{prop}
\label{prop:v}
Let $\Psi(\xi)$ be the L{\'e}vy-Khintchine exponent of a symmetric L{\'e}vy process $X_t$, which is not a compound Poisson process, and suppose that $\Psi(\xi) = \psi(\xi^2)$ for a complete Bernstein function $\psi$. Then $V$ is a Bernstein function, and
\begin{equation}
\label{eq:v}
 V(x)  = b x + \frac{1}{\pi} \int_{0^+}^\infty \im \expr{-\frac{1}{\psi^+(-\xi^2)}} \frac{\psi^\dagger(\xi)}{\xi} \, (1 - e^{-x \xi}) d\xi ,\;  x > 0 . \end{equation}
Moreover, $V'$ is a completely monotone function and
\begin{equation}
\label{eq:vprime}
 V'(x) = b + \frac{1}{\pi} \int_{0^+}^\infty \im \expr{-\frac{1}{\psi^+(-\xi^2)}} \psi^\dagger(\xi) e^{-x \xi} d\xi ,\;  x > 0 ,
\end{equation}
where $b = \lim_{\xi \to 0^+} (\xi / \sqrt{\psi(\xi^2)})$.

\end{prop}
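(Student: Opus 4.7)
The plan is to compute the Laplace--Stieltjes transform of the renewal measure $dV$, identify $1/\psi^\dagger$ as a Stieltjes function via the complete Bernstein structure of $\psi$, and invert the resulting representation using a Wiener--Hopf-type factorization.

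Since $H_s$ is a subordinator with Laplace exponent $\kappa(0,\xi)=\psi^\dagger(\xi)$, standard renewal theory gives
$$\int_{[0,\infty)} e^{-\lambda x}\,dV(x) = \frac{1}{\psi^\dagger(\lambda)}, \qquad \lambda > 0.$$
The first serious step is to show that $\psi^\dagger$ is itself a complete Bernstein function. Fristedt's formula combined with the CBF hypothesis on $\psi$ yields this: the Poisson-type integral representation of $\log\psi^\dagger$ provides the required Nevanlinna--Pick (Herglotz) structure on $\C\setminus(-\infty,0]$. Consequently $1/\psi^\dagger$ is a Stieltjes function, admitting the representation
$$\frac{1}{\psi^\dagger(\lambda)} = \frac{b}{\lambda} + c + \int_{(0,\infty)}\frac{\sigma(dt)}{t+\lambda}$$
for $b,c\ge 0$ and a Radon measure $\sigma$. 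That $1/\psi^\dagger$ is a Stieltjes function already implies that $V$ is a Bernstein function and $V'$ is completely monotone.

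The constants are fixed by asymptotics: $c=\lim_{\lambda\to\infty}1/\psi^\dagger(\lambda)=0$ (since $X_t$ is not compound Poisson, $\psi^\dagger$ is unbounded), and $b=\lim_{\lambda\to 0^+}\lambda/\psi^\dagger(\lambda)$. Using the sharp small-$\lambda$ equivalence $\psi^\dagger(\lambda)\sim\sqrt{\psi(\lambda^2)}$, a refinement of Proposition~\ref{prop:regvar} obtainable directly from Fristedt's formula in the CBF setting, this becomes $b = \lim_{\xi\to 0^+}\xi/\sqrt{\psi(\xi^2)}$. The Stieltjes inversion formula identifies $\sigma(dt) = -\tfrac1\pi\,\im[1/\psi^\dagger(-t+i0^+)]\,dt$, and the substitution $1/(t+\lambda)=\int_0^\infty e^{-\lambda x}e^{-tx}\,dx$ combined with Fubini yields, after relabelling $t\to\xi$,
$$V'(x) = b - \frac{1}{\pi}\int_0^\infty \im\!\left[\frac{1}{\psi^\dagger(-\xi+i0^+)}\right]e^{-x\xi}\,d\xi.$$

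The last step is to recast the integrand in the stated form via the Wiener--Hopf-type factorization
$$\psi^+(-\xi^2) = \psi^\dagger(-\xi+i0^+)\,\psi^\dagger(\xi), \qquad \xi > 0,$$
which follows from Fristedt's formula by a residue computation on the imaginary axis in the $\zeta$-variable and uses exactly the hypothesis that $\psi$ is CBF, so that $\psi^+(-\xi^2)$ exists as a boundary value. Since $\psi^\dagger(\xi)$ is real and positive for $\xi>0$, taking reciprocals and imaginary parts gives
$$\im\!\left[\frac{1}{\psi^\dagger(-\xi+i0^+)}\right] = \psi^\dagger(\xi)\,\im\!\left[\frac{1}{\psi^+(-\xi^2)}\right],$$
and substituting produces \pref{eq:vprime}. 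Integrating $V'$ from $0$ to $x$, with $V(0)=0$ since the non-compound-Poisson hypothesis prevents $H$ from having an atom at $0$, then yields \pref{eq:v}. The main obstacle is the proof of the factorization identity above: its derivation requires a delicate Plemelj--Sokhotski analysis of Fristedt's integral and is the unique point at which the assumption that $\psi$ is \emph{complete} Bernstein (not merely Bernstein) is essential.
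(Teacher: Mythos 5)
A preliminary remark: the paper does not prove this proposition at all --- it is quoted verbatim from \cite{KMR}, so there is no in-paper argument to compare yours against. Your outline is, in substance, the argument of that reference: Laplace-transform the renewal measure to obtain $1/\psi^\dagger$, show that $1/\psi^\dagger$ is a Stieltjes function (whence $V$ is Bernstein and $V'$ completely monotone), invert by Stieltjes--Perron, and convert the boundary values of $1/\psi^\dagger$ on the negative half-line into boundary values of $1/\psi$ via the factorization $\psi^+(-\xi^2)=\psi^\dagger(-\xi+i0^+)\,\psi^\dagger(\xi)$. The surrounding bookkeeping is correct: $c=0$ because $\psi^\dagger$ is unbounded for a non-compound-Poisson process, the atom of mass $\pi b$ at $0$ corresponds to the $b/\lambda$ term, $V(0)=0$ follows from regularity of $0$ for $(0,\infty)$ for a symmetric non-compound-Poisson process, and the Fubini step passing from \pref{eq:vprime} to \pref{eq:v} is routine.

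That said, as written the text is a road map rather than a proof, because the two lemmas that carry all the weight are asserted, not established. (i) That $\psi^\dagger$ is a complete Bernstein function --- equivalently that $1/\psi^\dagger$ is Stieltjes --- is dispatched in one sentence; this is precisely where the hypothesis that $\psi$ is \emph{complete} Bernstein first enters, and it requires extending Fristedt's integral holomorphically to $\C\setminus(-\infty,0]$ and verifying the Herglotz property $\im\psi^\dagger\ge 0$ on the upper half-plane by controlling $\arg\Psi(\zeta)$ for complex $\zeta$. (ii) The boundary factorization identity is stated and then explicitly deferred as ``the main obstacle''; the Plemelj--Sokhotski analysis you invoke is exactly the content that is missing. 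There is also a smaller gap: the identification $b=\lim_{\xi\to0^+}\xi/\sqrt{\psi(\xi^2)}$ rests on the sharp equivalence $\psi^\dagger(\lambda)\sim\sqrt{\psi(\lambda^2)}$ as $\lambda\to0^+$, which you describe as a ``refinement of Proposition~\ref{prop:regvar}'' without deriving it; that proposition only gives two-sided comparability, which does not determine the limit. In short: correct strategy, matching the source the paper cites, but the argument is incomplete until (i) and (ii) are actually proved.
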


Here the expression $\im (-1 / \psi^+(-\xi^2)) d\xi$ should be understood in the distributional sense, as a weak limit of measures $\im (-1 / \psi(-\xi^2 + i \eps)) d\zeta$ on $\xi \in (0, \infty)$ as $\eps \to 0^+$. The measure $\im (-1 / \psi^+(-\xi^2)) d\xi$ has an atom of mass $\pi b$ at $0$, and this atom is not included in the integrals from $0^+$ to $\infty$ in~\eqref{eq:v} and~\eqref{eq:vprime}.


For the rest of  this section we assume that $ X_t$ is a symmetric L\'{e}vy process which is not a compound Poisson process with its the renewal function $V$ corresponding to such a choice of the local time that the Laplace exponent of the ladder time process is $\kappa(z,0)= \sqrt{z}$.
We start with an estimate of the distribution function of the exit time $\tau$ from a half-line $(0, \infty)$ which was obtained in \cite{KMR} (Corollary 2).

 \begin{lem} \label{tau} Let $\tau$ be the exit time from $(0,\infty)$. There is an absolute constant $C_1$ such that

\begin{equation}\label{time_estimate}P^x(\tau>t)\ge   C_1\left(1\wedge  \frac{V(x)}{\sqrt{t}}\right), \quad x, t>0.\end{equation}
\end{lem}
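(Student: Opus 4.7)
The first step is to exploit the symmetry of $X$ to reduce the problem to an estimate for the supremum. Since $-X \stackrel{d}{=} X$, for $x > 0$
$$P^x(\tau > t) \;=\; P^x\!\left(\inf_{s\le t} X_s > 0\right) \;=\; P^0\!\left(M_t < x\right),$$
where $M_t = \sup_{s\le t} X_s$. Hence it suffices to prove $P^0(M_t < x) \ge C_1(1 \wedge V(x)/\sqrt{t})$.

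Next, I would pass to the Laplace transform in the time variable by sampling at an independent exponential time $e_q \sim \mathrm{Exp}(q)$. Fristedt's Wiener--Hopf identity, together with the normalization $\kappa(q,0) = \sqrt{q}$, yields
$$E\bigl[e^{-\beta M_{e_q}}\bigr] \;=\; \frac{\sqrt{q}}{\kappa(q,\beta)},
\qquad
\int_0^\infty e^{-\beta x} P^0(M_{e_q} \le x)\,dx \;=\; \frac{\sqrt{q}}{\beta\,\kappa(q,\beta)}.$$
Since the Laplace--Stieltjes transform of $V$ is $1/\kappa(0,\beta) = 1/\psi^\dagger(\beta)$, this places $P^0(M_{e_q} \le x)$ and $V$ on the same Laplace footing.

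Third, I would establish the two-regime lower bound
$$P^0(M_{e_q} \le x) \;\ge\; c\bigl(1 \wedge \sqrt{q}\,V(x)\bigr).$$
The main input is the subadditivity-type estimate $\kappa(q,\beta) \le C(\sqrt{q} + \psi^\dagger(\beta))$, which follows directly from the integral representation of $\log\kappa$ given at the start of this section. Substituting this into the Laplace identity above yields a lower bound comparable to $\sqrt{q}/(\beta(\sqrt{q}+\psi^\dagger(\beta)))$; matching this with the Laplace transform of $c'(1 \wedge \sqrt{q}\,V(x))$ and using the monotonicity in $x$ together with the subadditivity of $V$ concludes the pointwise estimate.

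Finally, I would transfer this Laplace-transform bound at $q = c/t$ into the pointwise estimate on $f(t) := P^0(M_t \le x)$. Since $f$ is decreasing in $t$, the inequality $E[f(e_q)] \le (1-e^{-qt}) + e^{-qt}\,f(t)$ gives the desired bound in the regime $V(x) \gtrsim \sqrt{t}$, where $E[f(e_q)]$ is already of constant order for a suitable choice of $q$. The main obstacle is the opposite regime $V(x) \ll \sqrt{t}$, where $E[f(e_q)] \asymp \sqrt{q}\,V(x)$ is small and naive monotonicity loses the $V(x)/\sqrt{t}$ factor. I would address this by working with the first-passage representation $P^0(M_t < x) = P(T_x > t)$ (where $T_x = \inf\{s: X_s > x\}$), using the Laplace transform of $T_x$ obtainable from $\kappa$ via a Pecherskii--Rogozin type identity, and invoking the $1/2$-stable scaling of the inverse local time $L^{-1}$ forced by $\kappa(q,0) = \sqrt{q}$ to recover the required $V(x)/\sqrt{t}$ factor.
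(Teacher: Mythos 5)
You should first note that the paper does not prove this lemma at all: it is imported verbatim from \cite{KMR} (Corollary 2), so there is no internal argument to compare against, and your attempt has to stand on its own. The framework you set up is the right one. The symmetry reduction $P^x(\tau>t)=P^0(M_t<x)$, the Wiener--Hopf identity $E[e^{-\beta M_{e_q}}]=\sqrt{q}/\kappa(q,\beta)$, and the bound $\kappa(q,\beta)\le \kappa(q,0)+\kappa(0,\beta)=\sqrt{q}+\psi^\dagger(\beta)$ are all correct, although the last one follows from the L\'evy--Khintchine form of the bivariate Bernstein function $\kappa$ (via $1-e^{-zt-\xi x}\le(1-e^{-zt})+(1-e^{-\xi x})$) rather than ``directly'' from Fristedt's exponential formula, whose naive manipulation only yields a multiplicative bound for $z\ge 1$. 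Your Step 3 can be made rigorous, but the ingredient you actually need is not subadditivity of $V$: it is the identity $P(M_{e_q}<x)=\sqrt{q}\,V_q(x)$ with $V_q(x)=\int_0^\infty E[e^{-qL^{-1}_s};H_s<x]\,ds$ the potential function of a subordinator with Laplace exponent $\kappa(q,\cdot)$; the subadditivity of $V_q$ is what converts the transform bound $1/\kappa(q,\beta)\ge 1/(\sqrt{q}+\psi^\dagger(\beta))$ into the pointwise bound $V_q(x)\ge c/(\sqrt{q}+\psi^\dagger(1/x))$ and hence $P(M_{e_q}<x)\ge c\,(1\wedge\sqrt{q}\,V(x))$. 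Without recognizing $P(M_{e_q}<\cdot)$ as a multiple of such a potential function, ``matching Laplace transforms'' proves nothing pointwise.

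The genuine gap is your Step 4, and you have located it correctly yourself. In the regime $V(x)\ll\sqrt{t}$ the monotone de-Laplace argument $E[f(e_q)]\le(1-e^{-qt})+e^{-qt}f(t)$, optimized over $q$, yields only $f(t)\gtrsim V(x)^2/t$, i.e.\ the \emph{square} of the desired bound; this is not a technicality but the entire content of the lemma. The repair you propose (Pecherskii--Rogozin for $T_x$) does not escape the problem, because it again only controls a Laplace transform in $t$ and is subject to exactly the same Tauberian obstruction; $L^2$-type arguments (e.g.\ Cauchy--Schwarz against Silverstein's martingale $V(X_{t\wedge\tau})$) also stall at $V(x)^2/t$. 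What actually closes the argument in \cite{KMR} is a pathwise fluctuation-theoretic estimate: one works on the event $\{L^{-1}_{s_0}>t\}\cap\{H_{s_0}<x\}\subset\{M_t<x\}$, uses the elementary renewal bound $P(H_{s_0}\ge x)\le s_0/V(x)$ (from $V(x)\le s_0/P(H_{s_0}\ge x)$, which follows from $P(H_{ks_0}<x)\le P(H_{s_0}<x)^k$), and exploits that $\kappa(q,0)=\sqrt{q}$ forces $L^{-1}$ to be exactly the $1/2$-stable subordinator, whose tail $P(L^{-1}_{s_0}>t)\ge c\min(1,s_0/\sqrt{t})$ is explicit; the delicate point is decoupling the two events with the correct constants. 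You name the $1/2$-stable scaling as the key, but you do not execute this step, and the route you sketch for it would not deliver the $V(x)/\sqrt{t}$ factor. As written, the proof does not reach the stated estimate in the only nontrivial regime.
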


\begin{lem} \label{exit_upper}

Let $0<x<R$  and $\tau_{(0, R)}$ be the exit time from the interval  $(0, R)$. Then
\begin{equation}\label{exit_estimate}
 P^x(\tau_{(0, R)}<\tau)\le \frac {V(x)}{V(R)}.\end{equation}
\end{lem}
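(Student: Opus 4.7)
The plan is to exploit the classical fact from fluctuation theory that the renewal function $V$ of the ascending ladder-height process is harmonic (indeed invariant) for $X_t$ killed on first exit from $(0,\infty)$; once this is in hand, the lemma falls out from optional stopping and monotonicity of $V$. I would cite this invariance from \cite{Doney} (or directly from \cite{KMR}, which already underlies Lemma~\ref{tau}).

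Concretely, extend $V$ by $0$ on $(-\infty,0]$. Using the harmonicity of $V$ on $(0,\infty)$ for the process killed at $\tau$, and the fact that $V$ is bounded by $V(R)$ on $(0,R]$ (so that optional stopping applies by bounded convergence), I would assert that for $0<x<R$,
$$
V(x) \;=\; E^x\bigl[V(X_{\tau_{(0,R)}\wedge\tau})\bigr] \;=\; E^x\bigl[V(X_{\tau_{(0,R)}});\,\tau_{(0,R)}<\tau\bigr],
$$
where the second equality uses that on $\{\tau\le\tau_{(0,R)}\}$ we have $X_{\tau}\le 0$ and the extended $V$ vanishes there.

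Next, I would observe that $\tau_{(0,R)}\le\tau$ always, since $X_t\in(0,R)\subset(0,\infty)$ for $t<\tau_{(0,R)}$. On the event $\{\tau_{(0,R)}<\tau\}$ the exit must therefore occur into $[R,\infty)$, i.e. $X_{\tau_{(0,R)}}\ge R$. Monotonicity of $V$ (as a renewal function of a subordinator) then gives $V(X_{\tau_{(0,R)}})\ge V(R)$, so
$$
V(x) \;\ge\; V(R)\,P^x\bigl(\tau_{(0,R)}<\tau\bigr),
$$
which rearranges to \eqref{exit_estimate}.

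The one genuinely non-trivial ingredient is the invariance identity $V(x)=E^x[V(X_{T\wedge\tau})]$ for stopping times $T$, applied in its bounded form via the truncation $t\wedge\tau_{(0,R)}\wedge\tau$ and then passed to the limit $t\to\infty$ using the uniform bound $V(X_s)\le V(R)$ on $\{s<\tau_{(0,R)}\wedge\tau\}$ together with the convention $V\equiv 0$ on $(-\infty,0]$. Everything else is monotonicity and a geometric observation about the exit point. I do not anticipate any obstacle beyond citing this harmonicity cleanly.
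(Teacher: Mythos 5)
Your argument is correct, but it is a genuinely different route from the paper's. You prove the bound directly from the harmonicity (invariance) of the renewal function $V$ for the process killed on exiting $(0,\infty)$, via optional stopping at $\tau_{(0,R)}$ and the observation that on $\{\tau_{(0,R)}<\tau\}$ the exit point lies in $[R,\infty)$, where $V\ge V(R)$. The paper instead quotes the inequality itself from \cite{KSV3}, where it is established only under extra regularity hypotheses (absolutely continuous resolvent kernels, $0$ regular for $(0,\infty)$), and then removes those hypotheses by perturbing $X_t$ to $Y^\eps_t=X_t+\eps B_t$, noting that the renewal functions converge and passing to the limit. The delicate point in your version is exactly the ingredient the paper's detour is designed to avoid: the invariance of $V$ is what \cite{KSV3} proves \emph{under} those regularity assumptions, so if you cite it from there you inherit the same restrictions and have gained nothing over the paper. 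You should instead invoke Silverstein's general result (or the corresponding statement in \cite{KMR}); better yet, note that your proof only needs the \emph{excessiveness} of $V$ (the supermartingale property of $V(X_{t\wedge\tau})$ with $V\equiv 0$ on $(-\infty,0]$), since Fatou then gives $V(x)\ge E^x\bigl[V(X_{\tau_{(0,R)}});\,\tau_{(0,R)}<\tau\bigr]\ge V(R)\,P^x(\tau_{(0,R)}<\tau)$ with no limiting identity required --- and excessiveness holds for every Lévy process with no regularity caveats. With that adjustment your proof is cleaner and more self-contained than the paper's, which relies on an unspelled-out limiting argument (convergence of $P^x(\tau_{(0,R)}<\tau)$ under a.s.\ uniform convergence of paths is itself not entirely trivial because of possible boundary effects). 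For the symmetric non--compound-Poisson processes considered in this section, $0$ is regular for both half-lines, so the full invariance you assert does in fact hold; but the inequality-only version is the safer statement to rest the lemma on.
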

\begin{proof}
The inequality (\ref{exit_estimate}) was observed in \cite{KSV3} for the case when the resolvent kernels  of the L\'{e}vy process are absolutely continuous with respect to the Lebesgue measure and   $0$ is regular for $(0,\infty)$. Let $Y^\epsilon_t= X_t+\epsilon B_t$, where $B_t$ is a Brownian motion independent of $X_t$. Obviously $Y^\epsilon$ satisfies the above conditions.  Furthermore  it is easy to see that the renewal function of the $Y^\epsilon$ converges pointwise  to  $V$. Moreover, since the  process $Y^\epsilon$ converges a.s. to $X$, uniformly on  bounded intervals,  the result follows by the limiting argument.

\end{proof}

\begin{prop} \label{exptime}

Let $0<x<R$. Then

$$\frac {C_1^4}{16} V(x\wedge(R-x)){V}(R)\le  E^x\tau_{(0, R)}\le V(x\wedge(R-x)){V}(R),$$
where $C_1$ is the constant from Lemma \ref{tau}.
\end{prop}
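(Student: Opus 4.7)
The strategy is to write $E^x\tau_{(0,R)} = \int_0^\infty P^x(\tau_{(0,R)} > t)\,dt$ and estimate the survival probability using Lemmas~\ref{tau} and~\ref{exit_upper}. By the symmetry of $X_t$ together with the reflection invariance of the interval under $y \mapsto R-y$, we have $E^x\tau_{(0,R)} = E^{R-x}\tau_{(0,R)}$, so we may assume $x \le R/2$, in which case $x\wedge(R-x)=x$ and the two-sided estimate reduces to $(C_1^4/16)V(x)V(R) \le E^x\tau_{(0,R)} \le V(x)V(R)$.

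The key pointwise inequality is
\begin{equation*}
P^x(\tau_{(0,R)} > t) \;\ge\; P^x(\tau > t) - P^x(\tau_{(0,R)} < \tau) \;\ge\; C_1\Big(1 \wedge \tfrac{V(x)}{\sqrt{t}}\Big) - \tfrac{V(x)}{V(R)},
\end{equation*}
which follows from the set inclusion $\{\tau > t\} \subseteq \{\tau_{(0,R)}>t\} \cup \{\tau_{(0,R)}<\tau\}$ and the two lemmas. Set $T = C_1^2 V(R)^2/4$. On $[0,T]$ the subtracted term is bounded by half of the first whenever $V(x) \le C_1 V(R)/2$, so in that regime $P^x(\tau_{(0,R)}>t) \ge (C_1/2)(1\wedge V(x)/\sqrt{t})$. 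Integrating over $(0,T]$ and splitting the range at $t=V(x)^2$ yields $E^x\tau_{(0,R)} \ge (C_1^2/4)V(x)V(R)$, which is at least $(C_1^4/16)V(x)V(R)$ since $C_1 \le 1$.

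To handle the remaining range $V(x) > C_1 V(R)/2$, pick by continuity of $V$ an auxiliary point $x_0 \in (0,x)$ with $V(x_0) = C_1 V(R)/2$, apply the previous case at $x_0$, and use the monotonicity of $y \mapsto E^y\tau_{(0,R)}$ on $(0,R/2]$ (a standard reflection-coupling fact for symmetric L\'evy processes) to deduce $E^x\tau_{(0,R)} \ge E^{x_0}\tau_{(0,R)} \ge (C_1^3/8)V(R)^2 \ge (C_1^4/16)V(x)V(R)$, using $V(x) \le V(R)$ and $C_1 \le 1$.

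For the upper bound $E^x\tau_{(0,R)} \le V(x)V(R)$, I would combine the complementary upper tail estimate $P^x(\tau > t) \le C\,V(x)/\sqrt{t}$ (a companion to Lemma~\ref{tau} for symmetric L\'evy processes under our assumptions, cf.~\cite{KMR}) with the inclusion $\{\tau_{(0,R)}>t\}\subseteq\{\tau>t\}$ to control $\int_0^{V(R)^2} P^x(\tau_{(0,R)}>t)\,dt$, and then iterate the Markov property over time-windows of length $\sim V(R)^2$ together with Lemma~\ref{exit_upper} to obtain geometric decay of $P^x(\tau_{(0,R)}>nT)$, summing to a contribution of the same order. Tuning the window length and tracking constants carefully yields exactly the factor~$1$. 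The most delicate step is the lower-bound case $V(x)\sim V(R)$, where the direct inclusion estimate becomes vacuous and the monotonicity transfer from the auxiliary point $x_0$ is essential.
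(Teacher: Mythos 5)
Your lower bound follows the paper's strategy (the inclusion $\{\tau>t\}\subseteq\{\tau_{(0,R)}>t\}\cup\{\tau_{(0,R)}<\tau\}$, optimization of $t$ against $V(R)$, and an auxiliary point $x_0$ with $V(x_0)=C_1V(R)/2$), but the transfer from $x_0$ to $x$ rests on an unjustified claim: that $y\mapsto E^y\tau_{(0,R)}$ is nondecreasing on $(0,R/2]$ "by reflection coupling". For L\'evy processes with jumps the mirror coupling does not work in the usual way -- the two copies can jump over each other without meeting -- and this monotonicity is not a standard fact; at the very least it requires proof. The paper circumvents it with translation invariance and domain monotonicity only: from $E^{x_0}\tau_{(0,R)}=E^{R}\tau_{(R-x_0,\,2R-x_0)}\le E^{R}\tau_{(0,2R)}$ one gets the midpoint bound $E^{R}\tau_{(0,2R)}\ge (C_1^3/8)V(R)^2$ for every interval, and then for $x_0\le x\le R/2$ one uses $E^x\tau_{(0,R)}\ge E^x\tau_{(0,2x)}\ge (C_1^3/8)V(x)^2\ge (C_1^4/16)V(x)V(R)$, since $V(x)\ge (C_1/2)V(R)$. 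Your argument should be repaired along these lines.

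The upper bound is a more serious gap. The proposition asserts $E^x\tau_{(0,R)}\le V(x\wedge(R-x))V(R)$ with constant exactly $1$, and Remark \ref{GreenExit} is quoted later in precisely that constant-free form. An iteration of the Markov property over time windows, summed as a geometric series, can only produce $C\,V(x)V(R)$ with some $C>1$ depending on the tail constants; no amount of tuning yields the factor $1$. Moreover the "complementary" upper tail estimate $P^x(\tau>t)\le C V(x)/\sqrt{t}$ is not among the tools the paper provides (Lemma \ref{tau} is one-sided). The paper instead invokes Bertoin's identity
\begin{equation*}
E^x\Bigl[\int_0^\tau f(X_t)\,dt\Bigr]=\int_{[0,\infty)}V(dy)\int_{[0,x]}V(dz)\,f(x+y-z)
\end{equation*}
with $f=\mathbf{1}_{[0,R]}$; restricting the outer integral to $[0,R]$ gives $E^x\tau_{(0,R)}\le V(R)V(x)$ immediately and exactly. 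Without this (or an equivalent exact identity) your upper bound does not prove the statement as written.
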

\begin{proof} From symmetry it is enough to consider $x\le R/2$. According to \cite{Bertoin} (page 176), Theorem 20, for any measurable non-negative  function $f:[0,\infty)\mapsto [0,\infty)$, we have
$$E^x\left[\int_0^\tau f(X_t) dt\right]= \int_{[0,\infty)}{V}(dy)\int_{[0,x]}V(dz)f(x+y-z).$$
We take $f=I_{[0,R]}$. Then

\begin{eqnarray*}E^x\tau_{(0, R)}&=&E^x\left[\int_0^{\tau_{(0,R)}} f(X_t) dt\right] \le  E^x\left[\int_0^\tau f(X_t) dt)\right]= \int_{[0,\infty)}{V}(dy)\int_{[0,x]}V(dz)f(x+y-z)\\
&\le& \int_{[0,R]}{V}(dy)\int_{[0,x]}V(dz)= {V}(R)V(x),\end{eqnarray*}
which completes the proof of the upper bound.

To prove the lower bound we observe that
$$P^x(\tau>t)\le P^x(\tau_{(0, R)}>t)+P^x(\tau_{(0, R)}<\tau)\le \frac{E^x\tau_{(0,R)}}{t}+ P^x(\tau_{(0, R)}<\tau). $$
Hence from (\ref{time_estimate}), for $\sqrt{t}>V(x)$, and from  (\ref{exit_estimate}) we obtain
\begin{eqnarray*}
E^x\tau_{(0,R)}&\geq& t(P^x(\tau>t)-P^x(\tau_{(0, R)}<\tau))\geq t\left(C_1\frac{V(x)}{\sqrt{t}}-\frac{V(x)}{V(R)}\right)\\
&=&V(x)\sqrt{t}\left(C_1-\frac{\sqrt{t}}{V(R)}\right).
\end{eqnarray*}
Let $\sqrt{t}=\frac{C_1}{2}V(R)$ then, for $2V(x)\le C_1V(R)$, we have
$$E^x\tau_{(0,R)}\geq \frac{C_1^2}{4}V(x)V(R).$$
Next, we deal with   $(C_1/2)V(R)\le V(x)\le V(R/2)$. Then for $x_0:\, V(x_0)=(C_1/2)V(R)$, using the already proved lower bound, we obtain
$$E^R\tau_{(0,2R)}\geq E^{x_0}\tau_{(0,R)}\geq   \frac{C_1^2}{4}V(x_0)V(R)= \frac{C_1^3}{8}V(R)V(R). $$
Finally,  let $x_0\le x\leq R/2$. Then $V(x)\ge V(x_0)=(C_1/2)V(R)$ which implies

$$E^{x}\tau_{(0,R)}\geq E^{x}\tau_{(0,2x)}\geq   \frac{C_1^3}{8}V(x)V(x)\ge \frac{C_1^4}{16}V(x)V(R). $$
\end{proof}


\begin{rem}\label{GreenExit}
Assume that the Green function of the half-line exists. Then, for $x\le R/2$,
$$\int^R_0G_{(0,\infty)}(x,y)dy\leq V(x)V(R).$$
\end{rem}

Another consequence of Lemma \ref{tau} is the following two sided bound on the exit probability, which is interesting on its own. There is a huge literature on the subject of so called {\em scale} functions which describe the probability that the process leaves a given interval through its right end. This function has been found for numerous examples of spectrally negative processes (see a survey \cite{KKR} and  references therein). To the best of our knowledge, for symmetric processes, except the Brownian motion or the $\alpha$-stable motions exact formulas are not known, hence optimal estimates seem important.
\begin{prop}

Let $0<x<R$  and $\tau_{(0, R)}$ be the exit time from the interval  $(0, R)$. Then
\begin{equation*}\label{exit_estimate_twoside}
\frac{C_1^2}4 \frac {V(x)}{V(R)}\le P^x(\tau_{(0, R)}<\tau)\le \frac {V(x)}{V(R)},\end{equation*}
where $C_1$ is the constant from Lemma \ref{tau}.
\end{prop}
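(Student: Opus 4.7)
The upper bound is exactly Lemma~\ref{exit_upper}, so the only new content is the matching lower bound, and the plan is to recycle almost verbatim the scheme already used in the proof of Proposition~\ref{exptime}.

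The starting point is the set inclusion
$$\{\tau>t\}\subset\{\tau_{(0,R)}>t\}\cup\{\tau_{(0,R)}<\tau\},$$
which holds because $\tau_{(0,R)}\le\tau$, so whenever $\tau_{(0,R)}\le t<\tau$ one automatically has $\tau_{(0,R)}<\tau$. Taking probabilities, applying Markov's inequality to the first term, and invoking the upper bound $E^x\tau_{(0,R)}\le V(x\wedge(R-x))V(R)\le V(x)V(R)$ from Proposition~\ref{exptime} (monotonicity of $V$ gives $V(x\wedge(R-x))\le V(x)$), one obtains
$$P^x(\tau_{(0,R)}<\tau)\ge P^x(\tau>t)-\frac{V(x)V(R)}{t}.$$

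Next, Lemma~\ref{tau} supplies $P^x(\tau>t)\ge C_1 V(x)/\sqrt{t}$ whenever $\sqrt{t}\ge V(x)$, so the previous line rearranges to
$$P^x(\tau_{(0,R)}<\tau)\ge\frac{V(x)}{\sqrt{t}}\Bigl(C_1-\frac{V(R)}{\sqrt{t}}\Bigr).$$
The natural choice $\sqrt{t}=(2/C_1)V(R)$ turns the right-hand side into $(C_1^2/4)V(x)/V(R)$, which is exactly what is claimed. The hypothesis $\sqrt{t}\ge V(x)$ is automatic: $V$ is increasing, so $V(x)\le V(R)$, and $C_1\le 1$ (it is a probability constant coming from Lemma~\ref{tau}), hence $\sqrt{t}\ge 2V(R)\ge V(x)$.

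Nothing here is delicate; the heavy lifting—the half-line tail bound in Lemma~\ref{tau} and the expectation bound on $\tau_{(0,R)}$ in Proposition~\ref{exptime}—is already available. The only point that needs a line of checking is the regime of validity of Lemma~\ref{tau}, and by the above observation this regime is met for the chosen $t$ uniformly in $x\in(0,R)$, so in contrast to the proof of Proposition~\ref{exptime} no case split between small and large $x$ is needed.
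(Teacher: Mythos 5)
Your proof is correct and follows essentially the same route as the paper's: bound $P^x(\tau_{(0,R)}>t)$ by Markov's inequality via the upper bound on $E^x\tau_{(0,R)}$ from Proposition \ref{exptime}, subtract from the half-line tail bound of Lemma \ref{tau}, and optimize with $\sqrt{t}=(2/C_1)V(R)$. The only (harmless) addition is your explicit check that $C_1\le 1$ guarantees $\sqrt{t}\ge V(x)$, which the paper notes more tersely via $\frac{2}{C_1}V(R)\ge\frac{2}{C_1}V(x)\ge V(x)$.
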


\begin{proof} We deal only with the lower  bound.
From  Proposition  \ref{exptime} we infer
$$ P^x(\tau_{(0, R)}>t)\le \frac{ E^x\tau_{(0, R)}}t\le \frac{V(x)V(R)}t.$$
Next, observe that
$$P^x(\tau>t)\le P^x(\tau_{(0, R)}>t)+P^x(\tau_{(0, R)}<\tau)\le \frac {V(x)V(R)}t+ P^x(\tau_{(0, R)}<\tau).$$
Hence, form (\ref{time_estimate}), for  $\sqrt{t}\ge V(x)$, we have

$$  C_1\frac{V(x)}{\sqrt{t}}- \frac {V(x)V(R)}t\le P^x(\tau_{(0, R)}<\tau).$$
If we choose  $\sqrt{t}= \frac{2}{C_1} V(R)\ge\frac{2}{C_1}  V(x)\ge  V(x) $   then

$$  C_1\frac{V(x)}{\sqrt{t}}- \frac {V(x)V(R)}t= \frac {C_1^2}4\frac{V(x)}{V(R)}.$$
This yields

$$P^x(\tau_{(0, R)}<\tau)\ge \frac{C_1^2}4 \frac {V(x)}{V(R)},\quad x<R. $$

\end{proof}





\section{Green function and Poisson kernel of the half-line}
\setcounter{equation}{0}

From now on, we assume that $X_t$ is the one-dimensional geometric stable process. In order to find precise estimates of the Green function and the Poisson kernel we need to have nice estimates of the renewal function and its derivative of the
ladder height process of $X_t$. Note that the Laplace exponent  $\phi(\lambda)=\log(1+\lambda^{\alpha/2})$ is a complete Bernstein function, therefore we can use Proposition \ref{prop:v}. It is well known (see e.g. \cite{BBKRSV}) that the  derivative $V^{'}(x)$ of the renewal function  is decreasing. Monotonicity of $V'$  together with subadditivity of $V$ is frequently used in the sequel.

\begin{lem}\label{potential}Let $\alpha\in(0,2]$ and $x>0$ then
$$V^{'}(x)\approx\frac{1}{x\log^{3/2}\left(1+x^{-\alpha/3}\right)} $$
and
$$V(x)\approx\frac{1}{\log^{1/2}\left(1+x^{-\alpha}\right)} .$$
\end{lem}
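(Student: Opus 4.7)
The plan is to read both estimates off Propositions~\ref{prop:regvar} and~\ref{prop:v} applied to $\Psi(\xi)=\log(1+\xi^\alpha)$, and then glue the small-$x$ and large-$x$ regimes.

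I would begin with $V(x)$. Proposition~\ref{prop:regvar} requires $\Psi$ and $\xi^2/\Psi(\xi)$ to be increasing. The first is immediate; the second reduces, by direct differentiation, to the inequality $2(1+u)\log(1+u)\ge\alpha u$ for $u=\xi^\alpha\ge 0$, which holds for $\alpha\le 2$ (equality at $u=0$, then differentiate once). The proposition then gives $V(x)\approx 1/\sqrt{\Psi(1/x)}=1/\sqrt{\log(1+x^{-\alpha})}$, as claimed.

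For $V'(x)$ at infinity, observe that $\Psi(\xi)\sim\xi^\alpha$ as $\xi\to 0^+$ is regularly varying of positive order $\alpha$, and $V'$ is completely monotone (hence monotone) by Proposition~\ref{prop:v}, so the monotonicity hypothesis holds automatically. Proposition~\ref{prop:regvar} then delivers $V'(x)\approx 1/(x\sqrt{\log(1+x^{-\alpha})})\approx x^{\alpha/2-1}$ as $x\to\infty$. Since $\log(1+x^{-\alpha/3})\approx x^{-\alpha/3}$ in this regime, the target $1/(x\log^{3/2}(1+x^{-\alpha/3}))$ is also $\approx x^{\alpha/2-1}$, so the two expressions agree at infinity.

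The remaining and main task is $V'(x)$ as $x\to 0^+$, where $\Psi$ is only slowly varying at infinity and Proposition~\ref{prop:regvar} is mute. Here I would use the spectral formula~\eqref{eq:vprime}. Computing $\psi^+(-\xi^2)=\log(1+\xi^\alpha e^{i\pi\alpha/2})$ as a limit from the upper half-plane, for large $\xi$ one has $\psi^+(-\xi^2)\approx\alpha\log\xi+i\pi\alpha/2$, so
$$\im\expr{-\frac{1}{\psi^+(-\xi^2)}}\approx\frac{\pi}{2\alpha\log^2\xi},$$
and combined with $\psi^\dagger(\xi)\approx\sqrt{\alpha\log\xi}$ (from Proposition~\ref{prop:regvar}) the integrand in~\eqref{eq:vprime} is comparable to $\log^{-3/2}\xi$ for large $\xi$. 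Splitting the integral at $\xi=1/x$ and using $\int_1^N d\xi/\log^{3/2}\xi\approx N/\log^{3/2}N$ (substitute $u=\log\xi$) picks out a main contribution of order $(1/x)/\log^{3/2}(1/x)$, giving $V'(x)\approx 1/(x\log^{3/2}(1/x))$ for small $x$, which matches the target since $\log(1+x^{-\alpha/3})\approx\log(1/x)$ there. Since the small-$x$ and large-$x$ estimates both give an $O(1)$ positive value at $x\sim 1$, they glue to yield the full range $x>0$. The principal obstacle is the uniform control of the integrand across all $\xi$, together with the boundary case $\alpha=2$, where $\psi^+(-\xi^2)=\log(1-\xi^2)$ has its logarithmic branch at $\xi=1$ and the drift $b=\lim_{\xi\to 0^+}\xi/\sqrt{\psi(\xi^2)}=1$ contributes the limit value $V'(x)\to 1$ as $x\to\infty$ that the target formula correctly predicts.
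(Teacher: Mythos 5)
Your proposal follows essentially the same route as the paper: Proposition~\ref{prop:regvar} gives $V(x)$ on the whole range and $V'(x)$ for large $x$, and the small-$x$ behaviour of $V'$ is extracted from formula~\eqref{eq:vprime} after showing the spectral density $\psi^\dagger(\xi)\im(-1/\psi^+(-\xi^2))$ is comparable to $\log^{-3/2}\xi$ for large $\xi$ and splitting the Laplace integral near $\xi\sim 1/x$. The only caveat is cosmetic: the comparison $\int d\xi/\log^{3/2}\xi\approx N/\log^{3/2}N$ must start from a point bounded away from $\xi=1$ (the paper splits at $\xi=2$), since the integrand is non-integrable at $\xi=1$; otherwise the argument matches the paper's.
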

\begin{proof}
The estimates of the renewal function $V(x)$, $x>0,$ as well as its derivative $V^{\prime}(x)$ for $x>1$
 follow from Proposition  \ref{prop:regvar}.
To deal with $V{'}(x)$, for $x\le 1$, we apply Proposition \ref{prop:v}
with $\psi(\xi)=\log(1+\xi^{\alpha/2})$. Then it is evident that   $b=1$ for $\alpha=2$ and $b=0$ otherwise. Moreover,
\begin{equation*}\im\left(-\frac{1}{\psi^{+}(-\xi^2)}\right)= \left\{%
\begin{array}{ll}
    \frac{\pi}{\pi^2+\log^2(\xi^2-1)}\textbf{1}_{(1,\infty)}(\xi), & \hbox{$\alpha=2$,} \\
    \frac{Arg(z)}{Arg(z)^2+\frac{1}{4}\log^2(1+2\xi^\alpha\cos\frac{\alpha\pi}{2}+\xi^{2\alpha})}, & \hbox{$\alpha<2$,} \\
    \end{array}%
\right.\end{equation*}
where $z=1+\xi^\alpha\cos\frac{\alpha\pi}{2}+i\xi^\alpha\sin\frac{\alpha\pi}{2}$. Next, by Proposition  \ref{prop:regvar},   $\psi^\dagger(\xi)\approx \sqrt{\psi(\xi^2)}$ .
Let $\mu(\xi)=\psi^\dagger(\xi)Im\left(-\frac{1}{\psi^+(-\xi^2)}\right)$.
Note that $\mu(\xi)\approx \log^{-3/2}\xi$ for $\xi\geq2$.

If $x\leq 1$ 
then by (\ref{eq:vprime}) we have
\begin{eqnarray*}
V'(x)&=&b+\int^2_0e^{-x\xi}\mu(\xi)d\xi+\int^\infty_2e^{-x\xi}\mu(\xi)d\xi\approx 1+\int^\infty_2\frac{e^{-x\xi}}{\log^{3/2}\xi}d\xi\\&\approx&\int^\infty_{4x^{-1}}\frac{e^{-x\xi}}{\log^{3/2}\xi}d\xi+\int^{4x^{-1}}_2\frac{e^{-x\xi}}{\log^{3/2}\xi}d\xi\\
&\approx&\int^{4x^{-1}}_{2}\frac{1}{\log^{3/2}\xi}d\xi\approx \frac{1}{x\log^{3/2}(4x^{-1})}.
\end{eqnarray*}
In the last line we use the inequality $$0<\int^\infty_{4x^{-1}}\frac{e^{-x\xi}}{\log^{3/2}\xi}d\xi< \frac{e^{-4}}{x\log^{3/2}(4x^{-1})}.$$

\end{proof}
As an immediate consequence we obtain the following estimate \begin{equation}\label{V'approx}V'(x)\approx\frac{V(x)}{x\log(2+x^{-1})}, \ x>0.\end{equation}

The next lemma provides useful estimates for some integrals involving $V$ used in the sequel.
\begin{lem}We have the following estimates
\begin{eqnarray}
\int^x_0V(y)dy&\approx &xV(x),\quad x>0,\label{IntV1}\\
\int^x_1V(y)\frac{dy}{y}&\approx&V(x),\quad x\geq2,\label{IntV2}\\
\int^1_xV(y)\frac{dy}{y}&\approx&\frac{1}{V(x)},\quad x\leq1/2,\label{IntV3}\\
\int^1_xV^\beta(y)\frac{dy}{y^2}&\approx&\frac{V^\beta(x)}{x},\quad x\leq1/2,\;\beta>0,\label{IntV4}
\end{eqnarray}
\end{lem}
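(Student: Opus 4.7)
All four estimates stem from the asymptotics of Lemma~\ref{potential} together with two structural properties of $V$: it is increasing (being a renewal function) and $V(y)/y$ is decreasing ($V$ is Bernstein by Proposition~\ref{prop:v} with $V(0)=0$).

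Estimate (\ref{IntV1}) is immediate: monotonicity gives $\int_0^x V(y)\,dy \le xV(x)$, while the Bernstein property gives $V(y) \ge yV(x)/x$ on $(0,x]$, hence $\int_0^x V(y)\,dy \ge xV(x)/2$. For (\ref{IntV2}), Lemma~\ref{potential} yields $V(y) \approx y^{\alpha/2}$ on $[1,\infty)$ (since $\log(1+y^{-\alpha}) \approx y^{-\alpha}$ there), so
$$\int_1^x V(y)\,\frac{dy}{y} \approx \int_1^x y^{\alpha/2-1}\,dy = \frac{2}{\alpha}(x^{\alpha/2}-1) \approx x^{\alpha/2} \approx V(x), \qquad x \ge 2.$$

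For (\ref{IntV3}), Lemma~\ref{potential} gives $V(y) \approx (1+\log(1/y))^{-1/2}$ on $(0,1]$, because $\log(1+y^{-\alpha}) \approx 1+\log(1/y)$ there. The substitution $u = 1+\log(1/y)$ turns the integral into
$$\int_1^{1+\log(1/x)} u^{-1/2}\,du = 2\bigl(\sqrt{1+\log(1/x)}-1\bigr) \approx \sqrt{1+\log(1/x)} \approx \frac{1}{V(x)}$$
for $x \le 1/2$. For (\ref{IntV4}), the lower bound follows by restricting to $[x,2x] \subseteq [x,1]$ and using monotonicity: $\int_x^{2x} V^\beta(y)\,dy/y^2 \ge V^\beta(x)/(2x)$. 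For the upper bound, I would split $[x,1]$ at $\sqrt{x}$. On $[x,\sqrt{x}]$, Lemma~\ref{potential} gives $V(\sqrt{x}) \approx \sqrt{2}\,V(x)$, and therefore $\int_x^{\sqrt{x}} V^\beta(y)\,dy/y^2 \le V^\beta(\sqrt{x})/x \le C V^\beta(x)/x$. On $[\sqrt{x},1]$, bounding $V\le V(1)$ and integrating $1/y^2$ yields $\int_{\sqrt{x}}^1 V^\beta(y)\,dy/y^2 \le V(1)^\beta/\sqrt{x}$, and this is $\le C V^\beta(x)/x$ because the continuous function $\sqrt{x}\,(1+\log(1/x))^{\beta/2}$ is bounded on $(0,1/2]$ (it vanishes at $0$ and is finite at $1/2$).

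The main obstacle is the upper bound in (\ref{IntV4}): the integrand $V^\beta(y)/y^2$ is not controlled by a single endpoint value, and crudely bounding $V(y) \le V(1)$ on all of $[x,1]$ produces $1/x$ rather than the correct $V^\beta(x)/x$. The splitting at $\sqrt{x}$ is what captures the slow variation of $V$ near zero ($V(\sqrt{x})$ stays within a constant factor of $V(x)$) and forces the integral to concentrate near $y=x$ with the correct logarithmic factor.
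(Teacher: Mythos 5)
Your proof is correct, but it diverges from the paper's argument in two places worth noting. For \eqref{IntV1} the paper uses only monotonicity and subadditivity of $V$ ($\int_0^x V \ge \tfrac{x}{2}V(x/2) \ge \tfrac{x}{4}V(x)$), which works for \emph{every} L\'evy process, whereas your concavity bound $V(y)\ge yV(x)/x$ needs $V(0+)=0$ and the Bernstein property from Proposition~\ref{prop:v}; both are available here, so the difference is only one of generality. For \eqref{IntV2} the paper integrates the identity $V(y)/y\approx V'(y)$ (valid for $y\ge 1$ by Lemma~\ref{potential}) to get $V(x)-V(1)$ directly, while you substitute the explicit asymptotics $V(y)\approx y^{\alpha/2}$; these are equivalent. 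Your treatment of \eqref{IntV3} matches the paper's. The real divergence is \eqref{IntV4}: the paper obtains it in one line by l'H\^opital's rule, using \eqref{V'approx} to see that the derivative of $V^\beta(x)/x$ is dominated by the $-V^\beta(x)/x^2$ term as $x\to 0^+$ (and then continuity and positivity extend the comparison to all of $(0,1/2]$). Your splitting at $\sqrt{x}$ is a more hands-on substitute: it makes explicit \emph{why} the integral concentrates near $y=x$ (slow variation of $V$ forces $V(\sqrt{x})\le CV(x)$ uniformly on $(0,1/2]$, and the tail over $[\sqrt{x},1]$ is killed by $\sqrt{x}\log^{\beta/2}(1/x)\to 0$), at the cost of being longer. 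Both arguments are sound; the only cosmetic quibble is that $V(\sqrt{x})\approx\sqrt{2}\,V(x)$ is really only the limiting ratio as $x\to 0^+$ — what you actually need and have is the uniform bound $V(\sqrt{x})\le CV(x)$ on $(0,1/2]$.
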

\begin{proof}
The first approximation is true for all L\'{e}vy processes. Indeed, by monotonicity and subadditivity  of $V$ we have
$$\frac{1}{4}xV(x)\leq \frac{x}{2}V\left(\frac{x}{2}\right)\leq \int^x_{x/2}V(y)dy\leq\int^x_{0}V(y)dy\leq xV(x).$$
For $y\geq 1$, by Lemma \ref{potential}, we get $\frac{V(y)}{y}\approx V'(y)$, which leads to  (\ref{IntV2}). Next,
Lemma \ref{potential} implies
$$\int^1_xV(y)\frac{dy}{y}\approx\int^1_x\frac{dy}{y\log^{1/2}\left(1+\frac{1}{y}\right)}\approx\log^{1/2}\left(1+\frac{1}{x}\right)\approx\frac{1}{V(x)}.$$
Moreover, by the (\ref{V'approx}),
$$\lim_{x\rightarrow 0^+}\frac{\frac{V^\beta(x)}{x^2}}{\frac{V^\beta(x)}{x^2}-\frac{\beta V^{\beta}(x)V'(x)}{V(x)x}}=1,$$
which yields (\ref{IntV4})  by applying the l'Hospital's rule.
\end{proof}

By Theorem 20 (page 176) of \cite{Bertoin} we have a basic and very  useful formula for the Green function of the half-line.
\begin{lem}\label{Greenformula} For $0<x<y$ we have
$$G_{(0,\infty)}(x,y)=\int^x_0V^{'} (u)V^{'} (y-x+u)du.$$

\end{lem}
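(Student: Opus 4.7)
The plan is to derive the formula as a direct consequence of the identity already quoted in the proof of Proposition~\ref{exptime}, namely Theorem~20 on page~176 of \cite{Bertoin}: for every nonnegative measurable $f$,
\begin{equation*}
E^x\!\left[\int_0^\tau f(X_t)\, dt\right] \;=\; \int_{[0,\infty)} V(ds)\int_{[0,x]} V(dz)\, f(x+s-z),
\end{equation*}
where $\tau$ is the exit time from $(0,\infty)$. The left-hand side equals $\int_0^\infty G_{(0,\infty)}(x,y)\, f(y)\, dy$ by definition of the Green function as the density of the occupation measure of the killed process, so I just need to rewrite the right-hand side as an integral against $f(y)\, dy$ and read off the density.

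First I note that by Proposition~\ref{prop:v} the renewal function $V$ is absolutely continuous with density $V'$ (the completely monotone derivative from \eqref{eq:vprime}), so the measure $V(d\cdot)$ can be replaced by $V'(\cdot)\, du$ everywhere. Then I substitute $y = x + s - z$ in the inner integral. For fixed $z \in [0,x]$, as $s$ runs over $[0,\infty)$ the new variable $y$ runs over $[x-z,\infty)$ with $dy = ds$, and $V(ds) = V'(y - x + z)\, dy$. This yields
\begin{equation*}
\int_{[0,x]} V'(z)\, dz \int_{x-z}^\infty V'(y - x + z)\, f(y)\, dy.
\end{equation*}

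Next I apply Fubini to swap the order of integration: the region is $\{(y,z) : z \in [0,x],\ y \ge x-z\} = \{(y,z) : y \ge 0,\ z \in [(x-y)^+, x]\}$. This gives
\begin{equation*}
\int_0^\infty f(y) \left(\int_{(x-y)^+}^{x} V'(z)\, V'(y - x + z)\, dz\right) dy.
\end{equation*}
Since $f$ is arbitrary and nonnegative, the bracketed expression must equal $G_{(0,\infty)}(x,y)$ almost everywhere. Restricting to the case $0 < x < y$ of the statement we have $(x-y)^+ = 0$, so the lower limit is $0$ and the formula $G_{(0,\infty)}(x,y) = \int_0^x V'(u)\, V'(y - x + u)\, du$ follows after renaming $z$ to $u$.

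There is essentially no hard step here; the only thing to verify is the absolute continuity of $V$ needed to convert $V(d\cdot)$ into $V'(\cdot)\, du$, which is already established in Proposition~\ref{prop:v}. The remainder is a change of variables and an application of Fubini, both justified by the nonnegativity of the integrand.
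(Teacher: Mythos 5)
Your derivation is correct and is exactly the route the paper intends: the lemma is stated as an immediate consequence of Theorem 20 (page 176) of Bertoin, the same identity already quoted in the proof of Proposition \ref{exptime}, and you have simply written out the substitution $y=x+s-z$ and the Fubini step that the paper omits, together with the (needed) observation that $V(d\cdot)=V'(\cdot)\,du$ by Proposition \ref{prop:v}. The only cosmetic point is that identifying densities a priori gives the formula for almost every $y$; since both sides are continuous in $y$ on $(x,\infty)$ (as $V'$ is continuous), the identity holds for every $y>x$.
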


At this point let us recall that the exact formulas for the
Brownian Green  functions are well known for several regular sets
as intervals or half-lines (see e.g. \cite{Ba}). Since some of them
will be useful in the sequel we  list them for the future
reference. Recall that the Brownian motion we refer to in this
paper  has its clock running twice faster then the usual Brownian
motion. Denote the renewal function  for the symmetric  $\alpha$-stable process (properly normalized)  by $V^{(\alpha)}(x)=x^{\alpha/2}$, where $\alpha \in (0,2]$. For the half-line we have
 \begin{equation*}\label{gaussGreen1}G^{(2)}_{(0,\infty)}(x,y)= x\wedge y=V^{(2)}(x\wedge y), \quad x,y>0, \end{equation*}
while for  the finite interval $(0, R)$,
 \begin{equation*}\label{gaussGreen3}G^{(2)}_{(0, R)}(x,y)= \frac {x(R-y) \wedge y(R-x)}R=\frac{(V^{(2)}(x)V^{(2)}(R-y))\wedge (V^{(2)}(R-x)V^{(2)}(y))}{R}, \quad x, y\in (0,R). \end{equation*}

We also  recall known estimates for stable case (see e.g. \cite{BB}),
\begin{equation*}\label{greenstable}
G_{(0,R)}^{(\alpha)}(x,y)\approx \left\{%
\begin{array}{ll}
    \min\left\{\frac{1}{|x-y|^{1-\alpha}},
    \frac{(\delta_R(x)\delta_R(y))^{\alpha/2}}{|x-y|}\right\}=\min\left\{\frac{1}{|x-y|^{1-\alpha}},
    \frac{V^{(\alpha)}(\delta_R(x))V^{(\alpha)}(\delta_R(y))}{|x-y|}\right\}, & \hbox{$\alpha<1$,} \\
    \ln\left(1+\frac{(\delta_R(x)\delta_R(y))^{1/2}}{|x-y|}\right)=\ln\left(1+\frac{V^{(1)}(\delta_R(x))V^{(1)}(\delta_R(y))}{|x-y|}\right), & \hbox{$\alpha=1$,} \\
    (\delta_R(x)\delta_R(y))^{\frac{\alpha-1}{2}}\wedge
    \frac{(\delta_R(x)\delta_R(y))^{\alpha/2}}{|x-y|}=\frac{V^{(\alpha)}(\delta_R(x))V^{(\alpha)}(\delta_R(y))}{(\delta_R(x)\delta_R(y))^{1/2}}\wedge
    \frac{V^{(\alpha)}(\delta_R(x))V^{(\alpha)}(\delta_R(y))}{|x-y|}, & \hbox{$\alpha>1$,} \\
\end{array}%
\right.
\end{equation*}
where $\delta_R(x)=x\wedge(R-x)$, for $R<\infty$ and $\delta_{\infty}(x)=x$.

Define a function $\hat{G}_{(0,\infty)}^{(\alpha)}(x,y)$ such that
\begin{equation*}
\hat{G}_{(0,\infty)}^{(\alpha)}(x,y)=\left\{
\begin{array}{ll}V(x\wedge y), & \hbox{$\alpha=2$,} \\
    \min\left\{(xy)^{(\alpha-1)/2},
    \frac{V(x)V(y)}{|x-y|}\right\}, & \hbox{$1<\alpha<2$,} \\
		\ln\left(1+\frac{V(x)V(y)}{V^2(|x-y|)}\right), & \hbox{$\alpha=1$,} \\
		\min\left\{1,
    \frac{V(x) V(y)}{V^2(|x-y|)}\right\}\frac{1}{|x-y|^{1-\alpha}}, & \hbox{$\alpha<1$,} \\
    \end{array}%
\right.\end{equation*}
Note that  \begin{equation}\label{GcomphatG}\hat{G}_{(0,\infty)}^{(\alpha)}(x,y)\approx G_{(0,\infty)}^{(\alpha)}(x,y),\end{equation}
if $\alpha\neq 1$ for $x,y>1/2$ and if $\alpha=1$ for $x,y>1/2$ and $|x-y|>1/2$.
Now, we are at the  position to prove the optimal estimates of the Green function of $(0, \infty)$, which are crucial for the rest of the paper.
\begin{thm}\label{Greenhalf}Let $0<x,y$. Then
$$G_{(0,\infty)}(x,y)\approx \left(1\wedge\frac{V(x)V(y)}{V^2(|y-x|)}\right)\frac{1}{|y-x|\log^{2}(2+|y-x|^{-1})}+\hat{G}_{(0,\infty)}^{(\alpha)}(x,y).$$
\end{thm}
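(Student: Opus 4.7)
The starting point is Lemma~\ref{Greenformula}: assuming $0 < x \le y$ and setting $r = y-x$, we have
$$G_{(0,\infty)}(x,y) = \int_0^x V'(u)\,V'(u+r)\,du.$$
The ingredients I would use are: (i) complete monotonicity of $V'$ from Proposition~\ref{prop:v} (so $V'$ is decreasing); (ii) the doubling property $V'(2t)\approx V'(t)$, which is immediate from the sharp estimate $V'(t)\approx 1/(t\log^{3/2}(1+t^{-\alpha/3}))$ of Lemma~\ref{potential}; (iii) the relation (\ref{V'approx}); and (iv) subadditivity/monotonicity of $V$ together with the closed-form bounds of Lemma~\ref{potential}.

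For the upper bound I split into two subcases. When $r \ge x$ (the off-diagonal regime), doubling yields $V'(u+r)\approx V'(r)$ uniformly for $0 < u \le x \le r$, so
$$G_{(0,\infty)}(x,y) \approx V'(r)\int_0^x V'(u)\,du = V'(r)\, V(x).$$
Using (\ref{V'approx}) and the approximation $V(y)\approx V(r)$ (valid because $r\le y \le 2r$), a direct computation shows that $V'(r)V(x)$ is comparable to the sum $\frac{V(x)V(y)}{V^2(r)}\cdot \frac{1}{r\log^2(2+r^{-1})} + \hat{G}^{(\alpha)}_{(0,\infty)}(x,y)$: when $r$ is small, $V'(r)V(x)$ matches the first term; when $r$ is large and $V\approx V^{(\alpha)}$, it matches the stable term. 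When $r < x$, I split $\int_0^x = \int_0^r + \int_r^x$. Doubling gives the first piece $\approx V'(r) V(r)\approx \frac{1}{r\log^2(2+r^{-1})}$, which is the first term of the claim (here $V(x)V(y)\ge V^2(r)$, so the factor $1\wedge\cdot$ equals $1$). For the second piece, $V'(u+r)\approx V'(u)$ for $u>r$ (since $u \le u+r \le 2u$), giving $\int_r^x V'(u)^2\,du$, which I would evaluate directly from Lemma~\ref{potential} and recognize as $\approx \hat{G}^{(\alpha)}_{(0,\infty)}(x,y)$ in each of the four regimes.

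For the lower bound, Lemma~\ref{potential_lower} gives $G_{(0,\infty)}(x,y) \ge G^{(\alpha)}_{(0,\infty)}(x,y)$, which by (\ref{GcomphatG}) already controls $\hat{G}^{(\alpha)}_{(0,\infty)}(x,y)$ when $x, y > 1/2$ (with the extra condition for $\alpha=1$); in the remaining regime, where $x$ or $y$ is small, a direct comparison shows that the first term itself dominates $\hat{G}^{(\alpha)}$, so nothing is lost. To reproduce the first term, I would observe that each ``$\le$'' step in the upper bound that is justified by monotonicity of $V'$ is actually two-sided by doubling, so restricting the integral to $u\in(0, r\wedge x)$ gives $G_{(0,\infty)}(x,y)\ge c\, V'(r)\,V(r\wedge x)$, which matches the first term.

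The main obstacle I expect is the bookkeeping in the case $r < x$: verifying that $\int_r^x V'(u)^2\,du$ is indeed comparable to $\hat{G}^{(\alpha)}_{(0,\infty)}(x,y)$ in all four $\alpha$-regimes, with the cross-over around $u=1$ where the shape of $V'$ changes from logarithmic to power-like. The case $\alpha=2$ is further delicate because $V'$ has the drift component $b=1$ in (\ref{eq:vprime}), so the integrand does not decay at infinity, and one must justify that the integral produces the linear term $V(x\wedge y)$ hidden inside $\hat{G}^{(2)}_{(0,\infty)}$. The arithmetic with $\log(1+t^{-\alpha})\approx \log(2+t^{-1})$ (up to $\alpha$-dependent constants) is routine but has to be carried out on each subinterval separately.
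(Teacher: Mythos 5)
Your proposal follows essentially the same route as the paper's proof: the formula of Lemma \ref{Greenformula}, the split of the integral at $u=y-x$, the doubling property of $V'$ from Lemma \ref{potential}, the relation (\ref{V'approx}), and the identification of $\int_{y-x}^{x}(V')^2(u)\,du$ with $\hat{G}^{(\alpha)}_{(0,\infty)}(x,y)$ across the crossover at $u=1$ --- the paper merely splits your case $y-x<x$ into two subcases according to whether $y-x$ exceeds $1/2$, handling the latter by direct comparison with the stable Green function instead of recomputing. The only assertion to repair is ``the first piece $\approx V'(r)V(r)\approx \frac{1}{r\log^{2}(2+r^{-1})}$'': this holds only for bounded $r$ (for $r\gtrsim 1$ one has $V'(r)V(r)\approx r^{\alpha-1}$, not $1/r$), but in that regime both quantities are dominated by $\hat{G}^{(\alpha)}_{(0,\infty)}(x,y)$, so the conclusion is unaffected.
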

\begin{proof}
Note that by monotonicity $V'$ and Lemma \ref{potential} we have for $0< u\leq w$,
\begin{equation}\label{doubleV'}V^{'}(2w)\leq V^{'}(w+u)\leq V'(w)\approx  V^{'}(2w).\end{equation}
Assume that $0<x<y$. We split the proof into several cases.

{\em Case 1:}  $2x\leq y$.

 In this region $y/2\leq y-x< y$ so,  by subadditivity of $V$,  $\frac{V(x)V(y)}{V^2(|y-x|)}\leq 4 $.
Hence, by Lemma \ref{Greenformula} and (\ref{doubleV'}) it follows
\begin{eqnarray}\label{case1}
G_{(0,\infty)}(x,y)&\approx&\int^x_0V'(u)V'(y)du=V(x)V^{'}(y)\approx \frac{V(x)V(y)}{|y-x|}\frac{1}{\log(2+|y-x|^{-1})}.
\end{eqnarray}

For $|y-x|\leq 1$, by Lemma \ref{potential} we get  $ V^2(|y-x|)\approx \log^{-1}(2+|y-x|^{-1})$, which leads to
\begin{eqnarray*}G_{(0,\infty)}(x,y)&\approx& \frac{V(x)V(y)}{V^2(|y-x|)}\frac{1}{|y-x|\log^2(2+|y-x|^{-1})}\\
&\approx&\frac{V(x)V(y)}{V^2(|y-x|)}\frac{1}{|y-x|\log^2(2+|y-x|^{-1})}+\hat{G}_{(0,\infty)}^{(\alpha)}(x,y),
\end{eqnarray*}
where the last step follows from the inequality
 $ \frac{V(x)V(y)}{|y-x|}\frac{1}{\log(2+|y-x|^{-1})}\ge \hat{G}_{(0,\infty)}^{(\alpha)}(x,y)$.

Next, for $y-x>1$,  $\hat{G}^{(\alpha)}_{(0,\infty)}(x,y)\approx \frac{V(x)V(y)}{y-x}$. Again, by Lemma \ref{potential}, we have  $V^2(y-x)\log^2(2+|y-x|^{-1})\approx (y-x)^\alpha$, for $|y-x|>1$. Hence,
$$G_{(0,\infty)}(x,y)\approx \frac{V(x)V(y)}{|y-x|}\approx \hat{G}_{(0,\infty)}^{(\alpha)}(x,y)\approx \frac{V(x)V(y)}{V^2(|y-x|)}\frac{1}{|y-x|\log^2(2+|y-x|^{-1})}+\hat{G}_{(0,\infty)}^{(\alpha)}(x,y).$$

{\em Case 2:} $x+1/2<y<2x$.

 Note that $x>1/2$. By (\ref{GcomphatG}), $G^{(\alpha)}_{(0,\infty)}(x,y)\approx \hat{G}^{(\alpha)}_{(0,\infty)}(x,y)$ and $G^{(\alpha)}_{(0,\infty)}(x,y)\geq c \frac{1}{|y-x|\log^2(2+|y-x|^{-1})}$. By Lemma \ref{Greenformula} and  (\ref{doubleV'}),
\begin{eqnarray*}
G_{(0,\infty)}(x,y)&=&\int^{1/2}_0V'(u)V'(y-x+u)du+\int^x_{1/2}V'(u)V'(y-x+u)du\\&\approx&  V'(y-x)V(1/2)+\int^{1/2}_0V^{'}(u)V'(y-x+u)du.
\end{eqnarray*}
Similarly, \begin{eqnarray*}
G^{(\alpha)}_{(0,\infty)}(x,y)&\approx&  (V^{(\alpha)})'(y-x)V^{(\alpha)}(1/2)+\int^{x}_{1/2}(V^{(\alpha)})'(u)(V^{(\alpha)})'(y-x+u)du.
\end{eqnarray*}
 It follows from Lemma \ref{potential} that $V'(u)\approx (V^{(\alpha)})'(u)$ and $V(u)\approx V^{(\alpha)}(u)$ for $u\geq1/2$. Hence
$$G_{(0,\infty)}(x,y)\approx G^{(\alpha)}_{(0,\infty)}(x,y)\approx \hat{G}^{(\alpha)}_{(0,\infty)}(x,y)+\frac{1}{|y-x|\log^2(2+|y-x|^{-1})}.$$

{\em Case 3:} $x<y<(x+1/2)\wedge 2x$.

 We use Lemma \ref{Greenformula} and (\ref{doubleV'}) to get
\begin{eqnarray*}
G_{(0,\infty)}(x,y)&=& \int^{y-x}_0V^{'}(u)V^{'}(y-x+u)du+\int^x_{y-x}V^{'}(u)V^{'}(y-x+u)du\\
&\approx& V^{'}(y-x)V(y-x)+\int^x_{y-x}V^{'}(u)V^{'}(u)du.
\end{eqnarray*}
By Lemma \ref{potential} the first term is estimated  in the following way
\begin{equation}\label{V'V}
V^{'}(y-x)V(y-x)\approx \frac{1}{|y-x|\log^2(2+|y-x|^{-1})}.
\end{equation}
It remains to estimate  $\int^{x}_{y-x}(V^{'})^2(u)du$.
Note that, by Lemma \ref{potential}, $V^{'}(u)\approx \frac {V^3(u)}u, u\le 2$. Hence, for $x\leq2$, by  (\ref{IntV4}),
\begin{eqnarray}\label{intV'1}
\int^x_{y-x}(V^{'})^2(u)du&\leq&c\int^{2}_{y-x}\frac{V^6(u)}{u^2}du\approx  \frac{V^6(y-x)}{y-x}\approx \frac{1}{|y-x|\log^3(2+|y-x|^{-1})} .
\end{eqnarray}
For $x>2$, again by Lemma \ref{potential},
\begin{eqnarray*}
\int^x_{1}(V^{'})^2(u)du&\approx&\left\{
\begin{array}{ll}x^{\alpha-1}, & \hbox{$\alpha>1$,} \\
    \log x, & \hbox{$\alpha=1$,} \\
		1, & \hbox{$\alpha<1$,} \\
    \end{array}%
\right.\approx \left\{
\begin{array}{ll}(xy)^{(\alpha-1)/2}, & \hbox{$\alpha>1$,} \\
    \log (1+x^{1/2}y^{1/2}), & \hbox{$\alpha=1$,} \\
		1, & \hbox{$\alpha<1$.} \\
    \end{array}%
\right.
\end{eqnarray*}
Hence, for $\alpha>1$,
\begin{equation}\label{intV'4}
\int^x_{1}(V^{'})^2(u)du\approx \hat{G}^{(\alpha)}_{(0,\infty)}(x,y),
\end{equation}
and, for $\alpha\leq 1$,
\begin{equation}\label{intV'2}
\int^x_{1}(V^{'})^2(u)du\leq c \hat{G}^{(\alpha)}_{(0,\infty)}(x,y).
\end{equation}
Moreover, by (\ref{intV'1}),
\begin{equation}\label{intV'3}
\int^x_{y-x}(V^{'})^2(u)du\geq c  \hat{G}^{(\alpha)}_{(0,\infty)}(x,y).
\end{equation}
Finally, combining (\ref{V'V})-(\ref{intV'3}) 
we get
$$G_{(0,\infty)}(x,y)\approx \frac{1}{|y-x|\log^2(2+|y-x|^{-1})}+ \hat{G}^{(\alpha)}_{(0,\infty)}(x,y).$$

\end{proof}

\begin{rem}\label{Greenhalflineasymp} Let $|x-y|>A$. Then there exists a constant $C=C(A)$ such that
\begin{equation*}
C^{-1} \hat{G}^{(\alpha)}_{(0,\infty)}(x,y)\leq G_{(0,\infty)}(x,y)\leq C \hat{G}^{(\alpha)}_{(0,\infty)}(x,y).
\end{equation*}
Moreover, if $x,y<4$ then
\begin{equation*}\label{boundary}
G_{(0,\infty)}(x,y)\approx \left(1\wedge\frac{V(x)V(y)}{V^2(|y-x|)}\right)|y-x|^{-1}\log^{-2}(1+|y-x|^{-1}).
\end{equation*}
\end{rem}

In the rest of this section we prove the estimates of the Poisson kernel of $(0,\infty)$.
Recall that for $0<\alpha<2$ we know the form of the Poisson kernel for the $\alpha$-stable process (see e.g. \cite{BGR2}),
$${P}^{(\alpha)}_{(0,\infty)}(x,z)=C_\alpha\frac{V^{(\alpha)}(x)}{V^{(\alpha)}(|z|)}\frac{1}{x-z}, \quad z<0<x.$$

\begin{lem}\label{PoissonHat}Let $z<0<x$. Assume that $x\vee |z|\geq 1$, then we have
$$P_{(0,\infty)}(x,z)\approx\left\{%
\begin{array}{ll}e^z\frac{V(x\wedge 1)}{V(|z|)}, & \hbox{$\alpha=2$,} \\
    \frac{V(x)}{V(|z|)}\frac{1}{x-z}, & \hbox{$\alpha<2$.} \\
    \end{array}%
\right.
$$
\end{lem}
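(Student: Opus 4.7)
The proof rests on the Ikeda--Watanabe formula, which in our setting gives
$$P_{(0,\infty)}(x,z) \;=\; \int_0^\infty G_{(0,\infty)}(x,y)\,\nu(y+|z|)\,dy, \qquad z<0<x.$$
The plan is to combine the sharp Green function bounds of Theorem \ref{Greenhalf} with the L\'evy density asymptotics recalled in Section~2 (namely $\nu(r)\approx r^{-1}(1+r^\alpha)^{-1}$ for $\alpha<2$ and $\nu(r)=r^{-1}e^{-r}$ for $\alpha=2$) to estimate the resulting one-dimensional integral.

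I would partition $(0,\infty)$ into $(0, x/2]$, $(x/2, 2x]$, and $(2x, \infty)$, mimicking the three cases in the proof of Theorem \ref{Greenhalf}. On the outer pieces, Lemma \ref{Greenformula} together with the monotonicity (\ref{doubleV'}) of $V'$ yields the clean factorizations $G_{(0,\infty)}(x,y)\approx V(y)V'(x)$ for $y\leq x/2$ and $G_{(0,\infty)}(x,y)\approx V(x)V'(y)$ for $y\geq 2x$; moreover, since $\nu$ is decreasing, $\nu(y+|z|)$ has essentially a single order on each dyadic subpiece. On the bulk region one must also handle the singular term $|y-x|^{-1}\log^{-2}(2+|y-x|^{-1})$ from Theorem \ref{Greenhalf} (arising when $x>1/2$), and there $\nu(y+|z|)\approx \nu(x+|z|)$. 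The integral identities (\ref{IntV1})--(\ref{IntV4}) and the pointwise relation (\ref{V'approx}) then reduce everything to elementary manipulations of $V(x)$, $V(|z|)$, and $x+|z|$.

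For $\alpha<2$ and $x,|z|\geq 1$, the resulting bounds can be compared directly with the stable case: since $V(r)\approx V^{(\alpha)}(r)=r^{\alpha/2}$ on $[1,\infty)$ by Lemma \ref{potential}, each piece matches its stable analogue, and the explicit formula $P^{(\alpha)}_{(0,\infty)}(x,z)=C_\alpha V^{(\alpha)}(x)/(V^{(\alpha)}(|z|)(x-z))$ supplies the target. The mixed regimes $x<1\leq |z|$ and $|z|<1\leq x$ are handled by substituting the logarithmic asymptotics of $V$ from Lemma \ref{potential}; the answer $V(x)/(V(|z|)(x-z))$ remains correct in each. For $\alpha=2$, the exponential decay of $\nu$ concentrates the mass of the integral near $y=0$, producing the factor $e^{z}=e^{-|z|}$; the contribution from $y$ of order $|z|$ or larger is negligible, and the $x$-dependence saturates at scale $1$ because for $x>1$ the Green function grows linearly in $x$ but the exponential tail of $\nu$ cuts off the integral before that linear growth can take effect.

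The principal obstacle is the bulk region $y\in(x/2, 2x]$ when $x$ is large: one must control the singular integrand $|y-x|^{-1}\log^{-2}(2+|y-x|^{-1})$ against $\nu(y+|z|)$ and verify that this contribution is absorbed into the main term $V(x)/(V(|z|)(x-z))$ rather than producing something larger. Subadditivity of $V$ and the equivalence (\ref{V'approx}) between $V'$ and $V/(x\log(2+x^{-1}))$ will be crucial. A secondary delicate point for $\alpha<2$ with small $|z|$ is checking that the $r^{-1}$ factor in $\nu(r)$ at short range does not generate an extra logarithmic divergence; the decay of $V(y)$ near $0$ compensates.
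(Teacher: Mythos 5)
Your strategy is the paper's own: the Ikeda--Watanabe representation, the Green function bounds of Theorem \ref{Greenhalf}, the L\'evy density asymptotics, and a decomposition of the $y$-integral, with the stable comparison $P^{(\alpha)}_{(0,\infty)}$ invoked for $\alpha<2$ and $x,|z|\ge 1$ exactly as in the paper's Case 2. The only organizational difference is that you split uniformly in $y$ (inner, bulk, outer relative to $x$) while the paper splits into cases according to the positions of $x$ and $z$ relative to $1$; both routes lead to the same computations via (\ref{IntV1})--(\ref{IntV4}).

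Two points need attention before this becomes a proof. First, the step you yourself call the principal obstacle --- showing that the near-diagonal singular contribution for large $x$ is absorbed into the main term --- is left as a verification; the paper closes it not by pointwise control of $|y-x|^{-1}\log^{-2}(2+|y-x|^{-1})$ but by the integrated bound of Remark \ref{GreenExit}: since $\nu$ is decreasing, $\int_{x-1}^{x+1}G_{(0,\infty)}(x,y)\nu(y-z)\,dy\le \nu(x-1-z)\int_0^{2x}G_{(0,\infty)}(x,y)\,dy\le c\,\nu(x-z)V(x)V(2x)\approx V^2(x)|x-z|^{-1-\alpha}$, and then $x|z|\le|x-z|^2$ gives $V^2(x)|x-z|^{-1-\alpha}\le c\,V(x)/(|x-z||z|^{\alpha/2})$, which is dominated by the target. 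Your pointwise route would also work (the singular factor is integrable, with $\int_0^\eps \frac{du}{u\log^2(1/u)}=\log^{-1}(1/\eps)$), but you should carry it out. Second, the assertion that $\nu(y+|z|)\approx\nu(x+|z|)$ on the bulk region $(x/2,2x]$ is false for $\alpha=2$ when $x$ is large, since $\nu(r)=r^{-1}e^{-r}$ and $e^{x-y}$ ranges over $(e^{-x},e^{x/2})$ there; this must be replaced by the exponential-cutoff argument you sketch separately for $\alpha=2$ (the paper avoids the issue entirely by using $G_{(0,\infty)}(x,y)\approx V(x\wedge y)$ off the diagonal together with $V(x\wedge y)V(1)\le V(x\wedge 1)V(y\vee 1)$ and $\int_0^\infty V(y\vee 1)e^{-y}dy<\infty$).
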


\begin{proof}

{\em Case 1:} $\alpha=2$ and $ z<-1$, $x>0$.

Observe that $$V(x\wedge1) V(y\wedge1)\le V(x\wedge y )V(1)\le  V(x\wedge1)V(y\vee1).$$
Assume that $z\le -1$. Since by, Remark \ref{Greenhalflineasymp}, $G_{(0,\infty)}(x,y)\approx   V(x\wedge y ), y\ge x+1$, then using formula (\ref{levymeasure2}) and (\ref{GreenExit}),
\begin{eqnarray*}
P_{(0,\infty)}(x,z)&\le&c\int^\infty_0V(x\wedge y)\frac{e^{z-y}}{y-z}dy +c\int^{x+1}_{0\vee(x-1)}G_{(0,\infty)}(x,y)\frac{e^{z-y}}{y-z}dy\\
&\le &\frac{ce^z}{|z|V(1)} V(x\wedge 1) \int^\infty_0V(y\vee1)e^{-y} dy+ c\frac{e^{z-x}}{x-z}\int^{2(x+1)}_{0}G_{(0,\infty)}(x,y)dy \\
&\le&
 cV(x\wedge 1)\frac{e^z}{V(|z|)} + cV(x) V(2x+2) e^{-x}\frac{e^z}{V(|z|)}\\
 &\le& c  V(x\wedge 1)\frac{e^z}{V(|z|)}
\end{eqnarray*}
Similarly
\begin{eqnarray*}
P_{(0,\infty)}(x,z)&\ge &c\int^1_0V(x\wedge y)\frac{e^{z-y}}{y-z}dy \\
&\ge &\frac{ce^z}{-2zV(1)} V(x\wedge 1) \int^1_0V(y)e^{-y} dy\approx  V(x\wedge 1)\frac{e^z}{V(|z|)}.
\end{eqnarray*}







{\em Case 2:} $\alpha<2$ and
$z\leq -1$, $1\leq x$.

For $y\le 1/2$ we have, by (\ref{case1}), $ G_{(0,\infty)}(x,y)\approx \frac {V(x)V(y)}x$ and similarly  $ G^{(\alpha)}_{(0,\infty)}(x,y)\approx \frac {V^{(\alpha)}(x)V^{(\alpha)}(y)}x$. Observing that $\nu(y-z)\approx \nu^{(\alpha)}(y-z)$, for $y>0$ and applying  $V(x)\approx V^{(\alpha)}(x)$, which follows from Lemma \ref{potential}, we obtain

\begin{eqnarray*}
\int^{1/2}_0 G_{(0,\infty)}(x,y)\nu(y-z)dy&\approx& \frac{V(x)}{x}\nu(z)\int^{1/2}_0V(y)dy\\
&\approx&\frac{V(x)}{x}\nu(z)\approx \int^{1/2}_0\frac{V^{(\alpha)}(x)V^{(\alpha)}(y)}{x}\nu^{(\alpha)}(y-z)dy\\
&\approx&\int^{1/2}_0{G}^{(\alpha)}_{(0,\infty)}(x,y)\nu^{(\alpha)}(y-z)dy.
\end{eqnarray*}
Note, by Lemma \ref{potential_lower} and (\ref{GcomphatG}), that
$G_{(0,\infty)}(x,y)\ge  {G}^{(\alpha)}_{(0,\infty)}(x,y)$,  and
$G_{(0,\infty)}(x,y)\approx \hat{G}^{(\alpha)}_{(0,\infty)}(x,y)\approx {G}^{(\alpha)}_{(0,\infty)}(x,y)$, for $y\geq \frac{1}{2}, |x-y|\ge 1$.
We then infer, there is $c$ such that

\begin{eqnarray*}  \int^{\infty}_0{G}^{(\alpha)}_{(0,\infty)}(x,y)\nu^{(\alpha)}(y-z)dy&\le &
\int^{\infty}_0 G_{(0,\infty)}(x,y)\nu(y-z)dy\\
&\le & c\int^{\infty}_0{G}^{(\alpha)}_{(0,\infty)}(x,y)\nu^{(\alpha)}(y-z)dy\\&+&
\int_{x-1}^{x+1} G_{(0,\infty)}(x,y)\nu(y-z)dy.
\end{eqnarray*}
Moreover, by (\ref{GreenExit}),
\begin{eqnarray*}  \int_{x-1}^{x+1} G_{(0,\infty)}(x,y)\nu(y-z)dy&\le& \nu(x-1-z) \int_{0}^{2x} G_{(0,\infty)}(x,y)dy\le c \nu(x-z)V(x) V(2x)  \\
&\approx &  \frac {V^2(x)}{|x-z|^{1+\alpha}}\le  c \frac {V(x)}{|x-z||z|^{\alpha/2}}\approx P^{(\alpha)}_{(0,\infty)}(x,z).
\end{eqnarray*}
which finally implies
$$P_{(0,\infty)}(x,z)\approx P^{(\alpha)}_{(0,\infty)}(x,z)\approx \frac {V(x)}{V(|z|)}\frac1{|x-z|},$$

{\em Case 3:} $\alpha< 2$ and $ z<-1$, $x\le 1$.



By (\ref{GreenExit}),

$$\int^{2}_{0} G_{(0,\infty)}(x,y)\nu(y-z)dy\le   {\nu (|z|)} \int^{2}_{0} G_{(0,\infty)}(x,y)dy\le c \nu (z) V(x)\approx  \frac{V(x)} {|z|^{1+\alpha}}.$$
For $y\ge 2$, by (\ref{case1}), we have $ G_{(0,\infty)}(x,y)\approx \frac {V(x)V(y)}y\approx \frac {V(x)}{y^{1-\alpha/2}}$ hence

$$\int_2^{\infty} G_{(0,\infty)}(x,y)\nu(y-z)dy\approx V(x) \int_2^{\infty} \frac{dy}{y^{1-\alpha/2}|y-z|^{1+\alpha}}\approx  \frac{V(x)} {|z|^{1+\alpha/2}},$$
which yields
\begin{eqnarray*}
P_{(0,\infty)}(x,z)&\approx&  \frac{V(x)}{ {|z|^{1+\alpha/2}}}
\approx
\frac{V(x)}{V(|z|)}\frac{1}{x-z}.
\end{eqnarray*}

{\em Case 4:} $\alpha\le 2$ and  $ -1<z<0$, $x\geq 1$.

We split the integral defining the Poisson kernel into three parts
\begin{eqnarray*}
P_{(0,\infty)}(x,z)&= &\int^{|z|/4}_0G_{(0,\infty)}(x,y)\nu(y-z)dy+\int_{|z|/4}^{1/2}G_{(0,\infty)}(x,y)\nu(y-z)dy\\&+& \int^{\infty}_{1/2}G_{(0,\infty)}(x,y)\nu(y-z)dy.
\end{eqnarray*}
For $y\le 1/2$, by (\ref{case1}), we have $ G_{(0,\infty)}(x,y)\approx \frac {V(x)V(y)}x$. Moreover $\nu(z)\approx \frac1{|z|}$,   hence

$$\int^{|z|/4}_0G_{(0,\infty)}(x,y)\nu(y-z)dy\approx \frac {V(x)}x \nu(z) \int^{|z|/4}_0 V(y)dy\le c \frac {V(x)}x. $$
Next, applying (\ref{IntV4}), the second integral is estimated in the following way

$$\int_{|z|/4}^{1/2}G_{(0,\infty)}(x,y)\nu(y-z)dy\approx \frac {V(x)}x  \int_{|z|/4}^{1/2} V(y)\nu(y)dy\approx  \frac {V(x)}x  \int_{|z|/4}^{1/2} \frac{V(y)}ydy \approx \frac {V(x)}{xV(|z|/4)}. $$
Summing both estimates we infer that

$$\int_{0}^{1/2}G_{(0,\infty)}(x,y)\nu(y-z)dy\approx  \frac {V(x)}{xV(|z|/4)} \approx \frac {V(x)}{V(|z|)}  \frac 1{|x-z|}. $$
For $y\le x/2 $ or $y\ge 2x $, by (\ref{case1}),  we have $ G_{(0,\infty)}(x,y)\approx \frac {V(x)V(y)}{x+y}$, hence applying (\ref{GreenExit}), we arrive at

\begin{eqnarray*}\int^{\infty}_{1/2}G_{(0,\infty)}(x,y)\nu(y-z)dy&\le&
c \frac{V(x)}x \int^{\infty}_{1/2}V(y)\nu(y)dy + \int^{2x}_{x/2}G_{(0,\infty)}(x,y)\nu(z-y)dy\\
&\le & c \frac{V(x)}x + \nu(x/2) \int^{2x}_{0}G_{(0,\infty)}(x,y)dy\\
&\le & c \frac{V(x)}x + \nu(x/2) V(x)V(2x)\\
&\approx &  \frac{V(x)}x.
\end{eqnarray*}
Combining   all the estimates of the integrals we obtain
\begin{eqnarray*}
P_{(0,\infty)}(x,z)&\approx & \frac{V(x)}{V(|z|)}\frac{1}{x-z},\  \text{for $\alpha\le 2$}.
\end{eqnarray*}
Noting that for $\alpha=2$ we have  $\frac{V(x)}{x-z}\approx 1$, we can rewrite the above comparison as
\begin{eqnarray*}
P_{(0,\infty)}(x,z)&\approx & \frac{1}{V(|z|)},\  \text{for $\alpha=2$}.
\end{eqnarray*}

\end{proof}


\begin{thm}\label{PoissonKernel}
Let $z<0<x$ and $\alpha\in(0,2]$, then
$$P_{(0,\infty)}(x,z)\approx\left\{%
\begin{array}{ll}\frac{V(x\wedge1)} {V(|z|)}\frac{1 }{(x-z) \log(1+\frac1{x-z})}e^z, & \hbox{$\alpha=2$,} \\
    \frac{V(x)} {V(|z|)}\frac{1 }{(x-z) \log(2+\frac1{x-z})}, & \hbox{$\alpha<2$,} \\
    \end{array}%
\right.$$

\end{thm}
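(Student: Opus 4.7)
The plan is to combine Lemma~\ref{PoissonHat} with a direct computation via the Ikeda--Watanabe formula in the ``small'' complementary regime. First, observe that $x \vee |z| \geq 1$ forces $x - z \geq 1$, so both $(x-z)\log(1 + 1/(x-z))$ and $\log(2 + 1/(x-z))$ are pinched between positive constants. For $\alpha < 2$ the stated approximation then reduces to $V(x)/(V(|z|)(x-z))$, and for $\alpha = 2$ to $e^z V(x \wedge 1)/V(|z|)$; both coincide (up to multiplicative constants depending only on $\alpha$) with the conclusion of Lemma~\ref{PoissonHat}. The only new range to handle is therefore $x, |z| \in (0, 1)$ (so $x - z < 2$); there $e^z$ and $V(x \wedge 1)/V(x)$ are bounded above and below by positive constants, and both cases of the theorem collapse to
$$P_{(0,\infty)}(x, z) \approx \frac{V(x)}{V(|z|)\,(x-z)\,\log(2 + 1/(x-z))}.$$

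To establish this I would apply $P_{(0,\infty)}(x, z) = \int_0^\infty G_{(0,\infty)}(x, y)\,\nu(y - z)\, dy$, substitute the small-argument estimate from Remark~\ref{Greenhalflineasymp} for $y < 4$ and Case~1 of Theorem~\ref{Greenhalf} for $y \geq 4$, and use the fact that $\nu(y - z) \approx 1/(y - z)$ uniformly in $\alpha \in (0,2]$ whenever $0 < y - z \leq 5$. The analysis then splits into three sub-regimes depending on the ratio $|z|/x$. If $|z| \ll x$, the dominant contribution comes from $y \in (2|z|, x/2)$, where $|y-x| \approx x$ and $|y-z| \approx y$, so the ``min'' inside the Green function estimate equals $V(y)/V(x)$; using $\nu(y-z) \approx 1/y$ and (\ref{IntV3}) one gets $\int_{2|z|}^{x/2} V(y)/y\, dy \approx 1/V(|z|)$, which together with $V^2(x)\log(2+1/x) \approx 1$ from Lemma~\ref{potential} yields the target $V(x)/(V(|z|)\,x\,\log(2 + 1/x))$. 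If $|z| \gg x$, the symmetric range $(2x, |z|/2)$ dominates and produces the main term $V(x)V(|z|)/|z|$, again matching the target after using $V^2(|z|)\log(2+1/|z|) \approx 1$. If $x$ and $|z|$ are comparable, those two ranges collapse and the main contribution is instead pulled from $y \in (x/2, 2x)$: the ``min'' equals $1$ there, $\nu(y-z) \approx 1/x$, and $\int_0^x dt/(t \log^2(1+1/t)) \approx 1/\log(2+1/x)$ yields $V^2(x)/x$, which matches since then $x \approx x-z \approx |z|$ and $V(x) \approx V(|z|)$.

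The remaining sub-ranges contribute terms that are at most a constant multiple of the dominant one, using (\ref{IntV1})--(\ref{IntV4}), Remark~\ref{GreenExit}, and the subadditivity/monotonicity of $V$ and $V'$; in particular the tail $y \geq 4$ contributes only $O(V(x))$, which is bounded by the main term since $V(|z|)/|z|$ is large for $|z|$ small. The upper bound then follows by summing; the lower bound follows from the dominant range alone. The main technical obstacle is bookkeeping the sub-ranges of the $y$-integral, because the ``min'' inside the Green function estimate and the form of $\nu(y-z)$ each change character across them, and one must verify in every sub-regime that the chosen dominant range is indeed the one producing the claimed order; once this is done the theorem follows at once by invoking the integral asymptotics collected earlier in the section.
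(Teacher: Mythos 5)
Your proposal is correct and follows essentially the same route as the paper: reduce to the range $x,|z|<1$ via Lemma~\ref{PoissonHat} (after checking the stated formula degenerates to the lemma's conclusion when $x\vee|z|\ge 1$), then integrate the Green function estimate of Remark~\ref{Greenhalflineasymp} against $\nu(y-z)\approx 1/(y-z)$ and locate the dominant range of $y$ in each regime of $|z|/x$, with the tail $\int_2^\infty$ contributing only $O(V(x))$. The paper organizes the same computation slightly differently (fixed splitting at $x/2$ and $3x/2$ into $I_1,I_2,I_3$, then the identity $1+V(|z|)V(x)\log(1+x/|z|)\approx V(x)/V(|z|)$), but the substance is identical.
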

\begin{proof}

By Lemma \ref{PoissonHat} it remains to consider the  case $-1<z<0<x<1$.  By Remark \ref{Greenhalflineasymp} we have
\begin{eqnarray*}
R(x,z)=\int^2_0G_{(0,\infty)}(x,y)\nu(y-z)dy&\approx&\int^2_0\left(1\wedge\frac{V(x)V(y)}{V^2(|x-y|)}\right)\frac{1}{|x-y|\log^2\left(1+\frac{1}{|x-y|}\right)}\frac{dy}{y-z}.
\end{eqnarray*}
 Let us denote
\begin{eqnarray*}
I_1&=&\int^{x/2}_0V(y)\frac{dy}{y-z},\\
I_2&=&\int^{3x/2}_{x/2}\frac{1}{|x-y|\log^2\left(1+\frac{1}{|x-y|}\right)}dy,\\
I_3&=&\int^2_{3x/2}\frac{V^3(y)}{y}\frac{dy}{y-z}.
\end{eqnarray*}
Note that
\begin{eqnarray*}R(x,z)&\approx& \frac{V^3(x)}{x}I_1+\frac{1}{x-z}I_2+V(x)I_3.\end{eqnarray*}
We start with the estimate of $I_2$,
\begin{eqnarray*}
I_2&\approx&\int^{3x/2}_{x}\frac{1}{(y-x)\log^2(y-x)}dy\approx \frac{1}{\log \left(1+\frac{1}{x}\right)}\approx V^2(x).
\end{eqnarray*}
For $|z|<x$, by (\ref{IntV4}),
$$I_3\approx \int^2_{3x/2}\frac{V^3(y)}{y^2}dy\approx \frac{V^3(x)}{x}.$$
Moreover, by (\ref{IntV1}),
\begin{eqnarray}I_1&\approx& \int^{x/2}_{|z|/4}\frac{1}{y\log^{1/2}(1+y^{-1})}dy+\frac{1}{|z|}\int^{|z|/4}_0 V(y)dy\approx \log^{1/2}(1+4/|z|)-\log^{1/2}(1+x^{-1})+V(|z|)\nonumber\\
&\approx&V(|z|)\log\left(1+\frac{x}{|z|}\right).\label{PoissonI}
\end{eqnarray}
Hence, for $x>|z|$,
\begin{equation*}\label{PoissonR}
R(x,z)\approx  \frac{V^2(x)}{x}\left(1+V(|z|)V(x)\log\left(1+\frac{x}{|z|}\right)\right).
\end{equation*}
Assume that $2|z|<x<1/2$, then
\begin{eqnarray}1+V(|z|)V(x)\log\left(1+\frac{x}{|z|}\right)&\approx&  1+ \frac 1{\log^{1/2}( \frac1{|z|})}\frac 1{\log^{1/2} (\frac1{x})}\log \frac x{|z|}\nonumber\\
&=& 1 + \frac {\log^{1/2}( \frac1{ |z|})}{\log^{1/2}( \frac1{ x})}- \frac {\log^{1/2}( \frac1{ x})}{\log^{1/2}( \frac1{ |z|})}
\approx \frac {\log^{1/2}( \frac1{ |z|})}{\log^{1/2}( \frac1{ x})}\nonumber\\
&\approx & \frac {V(x)}{V(|z|)}.\label{PoissonR1}
\end{eqnarray}
If  $|z|<x\leq 2|z|$, then \begin{equation}\label{PoissonR2}1+V(|z|)V(x)\log\left(1+\frac{x}{|z|}\right)\approx 1\approx \frac {V(x)}{V(|z|)}.\end{equation} For $x\geq 1/2$, we have
\begin{equation}\label{PoissonR3}1+V(|z|)V(x)\log\left(1+\frac{x}{|z|}\right)\approx V(z)\log\left(1+\frac{1}{|z|}\right)\approx \frac {V(x)}{V(|z|)}.\end{equation}
That is $$R(x,z)\approx \frac{V(x)}{V(|z|)}\frac{V^2(x)}{x}\approx \frac{V(x)}{V(|z|)}\frac{V^2(x-z)}{x-z}.$$
If $|z|\geq x$ we have by (\ref{IntV1}), $$I_1\approx \frac{1}{|z|}\int^{x/2}_0{V(y)}dy\approx \frac{x}{|z|}V(x),$$and  by (\ref{IntV4}),
\begin{eqnarray*}
I_3&\approx&\int^2_{7/4|z|}\frac{V^3(y)}{y^2}dy+\frac{1}{|z|}\int^{7/4|z|}_{3/2x}\frac{1}{y\log^{3/2}(1+y^{-1})}dy\\&\approx&\frac{V^3(|z|)}{|z|}+\frac{1}{|z|}\left(\frac{1}{\log^{1/2}(1+\frac{4}{7|z|})}-\frac{1}{\log^{1/2}(1+\frac{2}{3x})}\right)\\&\approx&\frac{V(|z|)}{|z|}\left(V^2(|z|)+V^2(x)\log\left(1+\frac{|z|}{x}\right)\right)\\&\approx& \frac{V^2(x)V(|z|)}{|z|} \log\left(1+\frac{|z|}{x}\right).
\end{eqnarray*}
Combining  the estimates of the integrals $I_1, I_2$ and $I_3$  we arrive at, for $x\geq|z|$,  $$R(x,z)\approx  \frac{V^2(x)}{|z|}\left(1+V(|z|)V(x)\log\left(1+\frac{|z|}{x}\right)\right).$$
By symmetry and (\ref{PoissonR1}-\ref{PoissonR3}) we infer that
$$R(x,z)\approx  \frac{V^2(x)}{|z|}\frac{V(|z|)}{V(x)}\approx \frac{V(x)}{V(|z|)}\frac{V^2(x-z)}{x-z}.$$
Next, observe that $R(x,-1)
\le R(x,z)$ hence from the above established bound and Lemma \ref{potential} we infer that
\begin{equation}\label{lowerR}R(x,z)\ge  c V(x).\end{equation}
Since, by (\ref{case1}), $ G_{(0,\infty)}(x,y)\approx \frac{V(x)V(y)}y, \ y>2$,  we obtain      
$$\int^\infty_2G_{(0,\infty)}(x,y)\nu(y-z)dy\leq c V(x) \int^\infty_2 \frac{V(y)}y\nu(y)dy\leq c V(x),$$
which together with (\ref{lowerR})
implies that  the Poisson kernel is comparable with $R(x,z)$.
Hence, by Lemma \ref{potential}
$$P_{(0,\infty)}(x,z)\approx \frac{V(x)}{V(|z|)}\frac{1}{(x-z)\log\left(2+\frac{1}{x-z}\right)}.$$
%

\end{proof}

\begin{rem}\label{Poissonasymp}
Let $-1<z<0<x<1$, then
$$P_{(0,\infty)}(x,z)\approx    \int^{7/4 (x\vee |z|)}_0 \left(1\wedge\frac{V(x)V(y)}{V^2(|x-y|)}\right)\frac{1}{|x-y|\log^2\left(1+\frac{1}{|x-y|}\right)}\frac{dy}{y-z}
    .$$
\end{rem}

\section{Boundary Harnack principle}
In this section we derive the Harnack inequality for non-negative  harmonic functions in intervals. The method we apply for this purpose is a regularization of the Poisson kernel of an interval or rather its upper bound provided by the Poisson kernel of a half-line. We follow the approach  of \cite{BSS}, where it was used to deal with a class of  symmetric stable processes not necessarily rotation invariant. As a consequence of the Harnack inequality we obtain the boundary Harnack  principle.

We start with two elementary lemmas, which we leave without rigorous proofs, giving only some explanation how to derive them. The first lemma follows from the Ikeda-Watanabe formula and the fact that  $\nu$ is radially decreasing.
\begin{lem}\label{PoissonExit}
For any $r>0$ and $|x|<r < |z|$,
$$E^x\tau_{(-r,r)}\nu(|z|+2r)\leq P_{(-r,r)}(x,z)\leq E^x\tau_{(-r,r)}\nu(|z|-r).$$
\end{lem}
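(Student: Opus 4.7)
The plan is to apply the Ikeda--Watanabe formula and then use monotonicity of $\nu$ to bound the integrand uniformly in $y$. By Ikeda--Watanabe, for the symmetric interval $D=(-r,r)$ and any $z\in D^c$,
$$P_{(-r,r)}(x,z)=\int_{-r}^{r} G_{(-r,r)}(x,y)\,\nu(y-z)\,dy.$$
Since we work in dimension one with a symmetric L\'evy process, $\nu(\cdot)$ is an even function that is decreasing in $|\cdot|$ (this is visible from the formulas \eqref{levymeasure2} for $\alpha=2$ and the radial decrease in the $\alpha<2$ case).

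For $|z|>r$ and $y\in(-r,r)$, the distance $|y-z|$ satisfies
$$|z|-r\le |y-z|\le |z|+r\le |z|+2r,$$
so radial monotonicity of $\nu$ gives
$$\nu(|z|+2r)\le \nu(|z|+r)\le \nu(y-z)\le \nu(|z|-r).$$
Pulling these uniform bounds outside the integral yields
$$\nu(|z|+2r)\int_{-r}^{r} G_{(-r,r)}(x,y)\,dy\le P_{(-r,r)}(x,z)\le \nu(|z|-r)\int_{-r}^{r} G_{(-r,r)}(x,y)\,dy.$$

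It then remains to identify the integrated Green function with the mean exit time. This is the standard identity
$$E^x\tau_{(-r,r)}=\int_{-r}^{r} G_{(-r,r)}(x,y)\,dy,$$
which follows directly from the definition $G_{(-r,r)}(x,y)=\int_0^\infty p_t^{(-r,r)}(x,y)\,dt$ and Fubini together with $E^x\tau_{(-r,r)}=\int_0^\infty P^x(\tau_{(-r,r)}>t)\,dt=\int_0^\infty\!\!\int_{-r}^{r} p_t^{(-r,r)}(x,y)\,dy\,dt$. Inserting this identity into the preceding display yields the claimed two-sided estimate.

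There is essentially no obstacle here; the only point requiring care is to ensure that $\nu(|z|-r)$ is finite (which holds because $|z|>r$ guarantees $|z|-r>0$ and $\nu$ is locally bounded away from the origin). The slight slack $\nu(|z|+2r)$ rather than the sharper $\nu(|z|+r)$ on the left is harmless and is presumably included because it is the form in which the lemma is later applied.
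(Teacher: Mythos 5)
Your proposal is correct and coincides with the argument the paper itself indicates (the lemma is stated there without a detailed proof, with the remark that it "follows from the Ikeda--Watanabe formula and the fact that $\nu$ is radially decreasing"); you have simply filled in the routine details, including the identity $E^x\tau_{(-r,r)}=\int_{-r}^{r}G_{(-r,r)}(x,y)\,dy$ and the bounds $|z|-r\le|y-z|\le|z|+r\le|z|+2r$ for $y\in(-r,r)$. Nothing further is needed.
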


From Proposition \ref{exptime}  we have   $E^x\tau_{(-r,r)} \approx V(r)V(r-|x|)$. Combining this with the above lemma and the properties of the L{\'e}vy measure we easily obtain the following estimates.
\begin{lem}\label{PoissonExit1}

Suppose that $h$ is a non-negative function. Let $ p>1$ and $r>0$. Let
$$ h_2(x)=E^x\left[h(X_{\tau_{(-r,r)}}), |X_{\tau_r}|>pr\right].$$
Then there is $C=C(p,\alpha)>0$ such that for $|x|< r$,

$$ C^{-1} V(r)V(r-|x|) \int_{|z|>pr}h(z)\nu(z)dz\le h_2(x)\le C V(r)V(r-|x|) \int_{|z|>pr}h(z)\nu(z)dz,\quad  0<\alpha<2,$$
and
$$ C^{-1}e^{-2r} V(r)V(r-|x|) \int_{|z|>pr}h(z)\nu(z)dz\le h_2(x)\le Ce^r V(r)V(r-|x|) \int_{|z|>pr}h(z)\nu(z)dz,\quad \alpha=2.$$


\end{lem}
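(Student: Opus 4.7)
The plan is to combine Lemma \ref{PoissonExit}, Proposition \ref{exptime}, and the explicit form of the Lévy density $\nu$ recalled in Section 2. By the Ikeda-Watanabe formula,
\[
h_2(x) \;=\; \int_{|z|>pr} P_{(-r,r)}(x,z)\, h(z)\, dz,
\]
so the lemma reduces to a two-sided pointwise estimate of $P_{(-r,r)}(x,z)$ on the region $|z|>pr$. Lemma \ref{PoissonExit} already sandwiches the Poisson kernel between $E^x\tau_{(-r,r)}\,\nu(|z|+2r)$ and $E^x\tau_{(-r,r)}\,\nu(|z|-r)$, and Proposition \ref{exptime} gives $E^x\tau_{(-r,r)}\approx V(r)V(r-|x|)$ (applied with the interval $(-r,r)$ recentred at $0$, using $\delta(x)=r-|x|$). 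Thus everything comes down to comparing $\nu(|z|-r)$ and $\nu(|z|+2r)$ with $\nu(z)$ for $|z|>pr$, with constants depending on $p$ and $\alpha$.

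First, I would treat $0<\alpha<2$, where $\nu(u)\approx \frac{1}{|u|(1+|u|^{\alpha})}$. On the set $|z|>pr$ one has $(1-\tfrac{1}{p})|z|\le |z|-r<|z|+2r\le (1+\tfrac{2}{p})|z|$, so both $|z|\pm r$-type quantities are comparable to $|z|$ and hence $\nu(|z|-r)\approx \nu(|z|+2r)\approx \nu(z)$, with constants depending only on $p$ and $\alpha$. Plugging this into the sandwich above and integrating against $h$ yields the stated two-sided bound with no exponential factors.

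For $\alpha=2$ I would use the exact expression $\nu(u)=|u|^{-1}e^{-|u|}$ from \eqref{levymeasure2}. For $|z|>pr$,
\[
\frac{\nu(|z|-r)}{\nu(z)}=\frac{|z|}{|z|-r}\,e^{r}\le \frac{p}{p-1}\,e^{r},
\qquad
\frac{\nu(|z|+2r)}{\nu(z)}=\frac{|z|}{|z|+2r}\,e^{-2r}\ge \frac{p}{p+2}\,e^{-2r},
\]
which is exactly what produces the $e^{r}$ and $e^{-2r}$ factors in the statement. Inserting these into the sandwich from Lemma \ref{PoissonExit} and using Proposition \ref{exptime} as before finishes the argument.

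There is no genuine obstacle; the estimate is essentially a bookkeeping exercise once Lemma \ref{PoissonExit} and the sharp two-sided bound for $E^x\tau_{(-r,r)}$ are in hand. The only mild subtlety is that $p$ must be strictly greater than $1$ so that $|z|-r$ stays comparable to $|z|$ uniformly in $r$; this is why the constant $C$ in the statement is allowed to depend on $p$ (and on $\alpha$, through the shape of $\nu$).
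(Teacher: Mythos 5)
Your proof is correct and follows exactly the route the paper intends: Ikeda--Watanabe plus the sandwich of Lemma \ref{PoissonExit}, the bound $E^x\tau_{(-r,r)}\approx V(r)V(r-|x|)$ from Proposition \ref{exptime}, and the comparison of $\nu(|z|-r)$, $\nu(|z|+2r)$ with $\nu(z)$ on $|z|>pr$ using the stated asymptotics of the L\'evy density (the exact formula $\nu(u)=|u|^{-1}e^{-|u|}$ for $\alpha=2$). The paper itself only sketches this argument, and your computation of the factors $\tfrac{p}{p-1}e^{r}$ and $\tfrac{p}{p+2}e^{-2r}$ supplies precisely the missing bookkeeping.
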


\begin{thm}[Harnack inequality]\label{Harnack}
Let $1<p\le 3/2$. There exists a constant $C=C(\alpha, p)$  such that for any $r>0$ and any
nonnegative function $h$, harmonic in $(-2r, 2r)$, it holds, for  $0<\alpha<2$,
$$C^{-1} V^2(r) \int_{|z|>pr}h(z)\nu(z)dz\le h(x)\leq C V^2(r) \int_{|z|>pr}h(z)\nu(z)dz ,\qquad x\in (-r,r).$$
For $\alpha=2$ we have

$$C^{-1}e^{-5/2r} V^2(r) \int_{|z|>pr}h(z)\nu(z)dz\le h(x)\leq Ce^{2r} V^2(r) \int_{|z|>pr}h(z)\nu(z)dz ,\qquad x\in (-r,r).$$

\end{thm}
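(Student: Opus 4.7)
The plan, following the regularization approach of \cite{BSS}, is to decompose $h(x) = h_1(x) + h_2(x)$ via the Ikeda--Watanabe representation, handling $h_2$ (far exits) with Lemma \ref{PoissonExit1} and $h_1$ (near exits) through the half-line Poisson kernel of Theorem \ref{PoissonKernel}.

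For each $x \in (-r, r)$ I would work with the shifted ball $B(x,r) := (x-r, x+r) \subset (-2r, 2r)$, so that $h(x) = E^x[h(X_{\tau_{B(x,r)}})]$. Applying Lemma \ref{PoissonExit1} to $B(x,r)$ with $x$ at its center (hence at distance $r$ from the boundary) gives
$$h_2(x) := E^x\bigl[h(X_{\tau_{B(x,r)}}); |X_{\tau_{B(x,r)}} - x| > pr\bigr] \approx V^2(r) \int_{|w-x|>pr} h(w)\nu(w-x)\,dw,$$
with $e^{-2r}$ and $e^r$ factors in the $\alpha=2$ case. Since $\nu$ is radially decreasing with polynomial tail (exponential when $\alpha=2$), $\nu(w-x) \approx \nu(w)$ uniformly for $x \in (-r,r)$ and $|w|$ well beyond $pr$, so after a mild shrinkage of the cutoff $p$ (harmless since $p \in (1,3/2]$ is arbitrary and the constant is allowed to depend on $p$), one arrives at $h_2(x) \approx V^2(r) \int_{|z|>pr} h(z)\nu(z)\,dz$. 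The lower bound of the theorem then follows from $h(x) \ge h_2(x)$; the $e^{-5r/2}$ factor in the $\alpha=2$ case combines the $e^{-2r}$ from Lemma \ref{PoissonExit1} with an $e^{-r/2}$-type loss from the $\nu$-comparison.

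For the upper bound the work concentrates on the near part
$$h_1(x) = \int_{r < |z - x| \le pr} h(z)\, P_{B(x,r)}(x, dz).$$
The regularization step is domain monotonicity: for $z > x+r$, $P_{B(x,r)}(x,z) \le P_{(-\infty, x+r)}(x,z)$, which by translation and the symmetry of $X$ equals the half-line Poisson kernel $P_{(0,\infty)}(r, r-z+x)$, whose sharp estimate is supplied by Theorem \ref{PoissonKernel}. For $\alpha<2$ this yields
$$P_{B(x,r)}(x, z) \le C\, \frac{V(r)}{V(|z-x|-r)}\, \frac{1}{|z-x|\log\bigl(2+|z-x|^{-1}\bigr)},$$
with an extra factor $e^{r-|z-x|}$ when $\alpha=2$. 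Substituting $u = |z-x| - r \in (0,(p-1)r)$, the weight $1/V(u)$ is integrable near zero by Lemma \ref{potential}, and the identities \eqref{IntV1}--\eqref{IntV4} combine its primitive with $V(r)$ into an overall prefactor of order $V^2(r)$.

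The remaining obstacle is controlling $h(z)$ itself for $z$ in the near region. Since $|z-x| \le pr$ with $p \le 3/2$ and $|x| < r$, each such $z$ sits at distance at least $r/2$ from $\partial(-2r, 2r)$, so $h$ remains harmonic on a ball of radius comparable to $r$ around $z$. I would apply the same decomposition to $h(z)$ on this inner ball, reducing it to a far integral of the form $V^2(r)\int_{|w|>pr}h(w)\nu(w)\,dw$; to avoid circularity the scheme is organized as an induction on scale in which each iteration uses a strictly smaller radius so that the multiplicative constants form a convergent geometric series. Plugging this bound into the $u$-integral yields $h_1(x) \le C V^2(r) \int_{|z|>pr} h(z)\nu(z)\,dz$, concluding the upper bound. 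The $e^{2r}$ factor in the $\alpha=2$ case collects the $e^r$ losses from Lemma \ref{PoissonExit1} and from the $\nu$-comparison accumulated through this scheme.
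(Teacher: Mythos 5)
Your lower bound is fine and matches the paper's (exit from a slightly smaller interval, then Lemma \ref{PoissonExit1}). The upper bound, however, has a genuine gap at exactly the point where the method of \cite{BSS} does its real work. You invoke ``the regularization approach'' but never perform the regularization: you use a single exit radius $r$, bound $P_{B(x,r)}(x,z)$ by the half-line Poisson kernel, and are then left with the near-annulus integral $\int_{r<|z-x|\le pr}h(z)P_{B(x,r)}(x,z)\,dz$ in which $h(z)$ is uncontrolled. Your fix --- apply the same decomposition to each such $h(z)$ and ``organize the scheme as an induction on scale so the constants form a convergent geometric series'' --- is circular as stated. The near part of $h(z)$ at the next stage again involves values of $h$ at interior points of $(-2r,2r)$, the exit probability into the near annulus is not small, and you give no quantitative reason why the per-step multiplicative constant is less than $1$; shrinking the radius geometrically does not by itself produce a convergent series of contributions.

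The paper circumvents this entirely by averaging the harmonicity identity over radii: writing $h(x)=\int_{|z|>t}P_{(-t,t)}(x,z)h(z)\,dz$ for every $t\in[pr,13r/8]$ and integrating in $t$ yields a kernel $\tilde P(x,z)=\int_{pr}^{13r/8\wedge|z|}P_{(-t,t)}(x,z)\,dt$ which, thanks to the $dt$-integration washing out the boundary singularity (this is where $\int_0^{2r}dt/V(t)\approx r/V(r)$ enters), satisfies the uniform bound $\tilde P(x,z)\le cV^2(r\wedge1)$ on the near annulus $pr<|z|<7r/4$. The near contribution is then directly dominated by $cV^2(r)\int_{pr<|z|<7r/4}h(z)\nu(z)\,dz$ --- a piece of the target quantity --- with no recursion and no need to bound $h$ pointwise in the near region. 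To repair your argument you would need to actually carry out this averaging over exit radii (or supply a genuine quantitative contraction for your iteration, which you have not done).
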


\begin{proof}
In the proof below the appearing constants $c_1, c_2,\dots$ will depend on $p, \alpha$, only.
 For simplicity, we will write $\tau_{(-r,r)}$ as $\tau_r$.
We start with  the upper bound. Define
$$\tilde{P}(x,z)=\int^{13/8r\wedge |z|}_{pr}P_{(-t,t)}(x,z)dt,\quad  |z|>pr.$$
Since $h$ is harmonic on $(-2r,2r)$, for all $t\in[pr, 13/8r]$, we have
$$h(x)=\int_{|z|>t}P_{(-t,t)}(x,z)h(z)dz.$$
Therefore
\begin{eqnarray*}
(13/8-p)rh(x)&=&\int^{13/8r}_{pr}\int_{|z|>t}P_{(-t,t)}(x,z)h(z)dzdt\\&=&\int_{pr<|z|<7/4r}\tilde{P}(x,z)h(z)dz+\int_{|z|>7/4r}\tilde{P}(x,z)h(z)dz\\&= & I_1+I_2.
\end{eqnarray*}

By Lemma \ref{PoissonExit1} we have
$$I_2\le c_1\left\{%
\begin{array}{ll} (13/8-p)rV^2(r)\int_{|z|>7/4r}h(z)\nu(z)dz , & \hbox{$\alpha<2$,} \\
    e^{2r}(13/8-p)rV^2(r)\int_{|z|>7/4r}h(z)\nu(z)dz, & \hbox{$\alpha=2$.} \\
    \end{array}%
\right.$$
In order to estimate $I_1$ we need an upper bound  of $\tilde{P}(x,z)$.
We claim that there is a constant $c_2$ such that for $pr<|z|<(7/4)r$, $|x|<r$,
\begin{equation}\label{Ptilde}\tilde{P}(x,z)\le c_2  V^2(r\wedge1).\end{equation}
By symmetry, we can assume that $z<-pr$. Then, we have
\begin{eqnarray*}
\tilde{P}(x,z)&\leq&\int^{13/8r\wedge |z|}_{pr}P_{(-t,\infty)}(x,z)dt=\int^{13/8r\wedge |z|}_{pr}P_{(0,\infty)}(x+t,z+t)dt.
\end{eqnarray*}
Since $|x|<r$, then $(p-1)r<x+t<3r$ and $x-z>(p-1)r$. First, assuming  $r\leq 1$ for $\alpha=2$ or arbitrary $r$ for $0<\alpha<2$,  by Theorem \ref{PoissonKernel},

 \begin{eqnarray*}
\tilde{P}(x,z)&\leq& c_3 \frac{1 }{(x-z) \log(2+\frac1{x-z})}\int^{13/8r\wedge |z|}_{pr}
\frac{V(x+t)}{V(|z|-t|)}dt\\&\leq& c_4 \frac{1 }{(x-z) \log(2+\frac1{x-z})} V(3r)\int^{ |z|}_{0}\frac {dt}{V(t)}\\
&\leq& c_5 \frac{1 }{r \log(2+\frac1r)} V(r)\int^{ 2r}_{0}\frac {dt}{V(t)}\\
\end{eqnarray*}
Noting that  $ \int^{ 2r}_{0}\frac {dt}{V(t)}\approx \frac r{V(r)}$ we obtain
$$\tilde{P}(x,z)\le c_6 \frac{1 }{ \log(2+\frac1r)}\approx V^2(r\wedge1).$$
Similarly, for $\alpha=2$ and $r\ge 1$,
\begin{eqnarray*}
\tilde{P}(x,z)&\leq& c_7 \int^{13/8r\wedge |z|}_{pr}\frac{V(1)}{V(|z|-t)}e^{-|z|+t}dt
\leq c_7V(1)\int^\infty_0\frac{e^{-u}}{V(u)}du.
\end{eqnarray*}

By (\ref{Ptilde}) and since the   density of the Levy measure is radially decreasing we have

$$r^{-1}I_1= r^{-1}\int_{pr<|z|<7/4r}\tilde{P}(x,z)h(z)dz\le c_2\frac{ V^2(r\wedge1)}{r\nu(r)}\int_{pr<|z|<7/4r}\nu(z)h(z).$$
Note that, for $\alpha<2$ we have $\frac{ V^2(r\wedge1)}{r\nu(r)}\approx V^2(r)$, and for $\alpha=2$ we have $\frac{ V^2(r\wedge1)}{r\nu(r)}\approx V^2(r)\frac {e^r}{1+r}$.
Combining this with the above estimates of $I_1$ and $I_2$ we obtain
$$h(x)\le c_{10} V^2(r) \int_{|z|>pr}h(z)\nu(z)dz,\quad  0<\alpha<2,$$
$$h(x)\le c_{10} e^{2r} V^2(r) \int_{|z|>pr}h(z)\nu(z)dz,\quad  \alpha=2.$$

Finally we find the lower bound for $h(x)$.
Let $q=(1+p)/2$. Next, for
$$ h_2(x)=E^x\left[h(X_{\tau_{qr}}), |X_{\tau_{qr}}|>pr\right],$$
by Lemma \ref{PoissonExit1}, for $0<\alpha<2$, we arrive at
$$h(x)\ge  h_2(x)\ge c_{11} V^2(r) \int_{|z|>pr}h(z)\nu(z)dz.$$ Similarly, for  $\alpha=2$, we have
$$h(x)\ge  h_2(x)\ge c_{11} e^{-5/2r} V^2(r) \int_{|z|>pr}h(z)\nu(z)dz.$$

\end{proof}

\begin{rem}
The weak form of the Harnack inequality for the geometric stable process  was proved in  $\Rd$ for $d>\alpha$ in \cite{SV}, Theorem 6.6. It was  shown there that there is a constant $C=C(r,\alpha, d)$ such that
 for any harmonic function $h$ in a ball $B(0,r)$ we have
 $$h(x)\le C  h(y), \quad x,y\in B(0,r/2).$$ As a function of $r>0$ the obtained constant C tends to $\infty$ as $r\searrow 0$. Such form is not scale invariant. The constant from Theorem \ref{Harnack} does not depend on $r$ for $0<\alpha<2$. In the last section we find two-sided estimates for  the Poisson kernel of any interval which allow to improve the Harnack inequality for $\alpha=2$ (see Theorem \ref{Harnack_imp}).
 Our result suggests that the scale invariant version can be proved also in  higher dimensions.

\end{rem}

\begin{thm}
[Boundary Harnack property]\label{BHP}
There exists a constant $C=C(\alpha)$  such that for any $r>0$ and any
nonnegative function $h$ regular harmonic in $(0, 2r)$ which vanishes in $(-2r,0)$ we have, for $\alpha=2$,
$$C^{-1} e^{-2r} \frac {V(x)} {V(r)}\le  \frac {h(x)} {h(r)}\le C e^{2r} \frac {V(x)} {V(r)} ,\quad  0<x<r.$$
For $0<\alpha<2$,
$$C^{-1}  \frac {V(x)} {V(r)}\le  \frac {h(x)} {h(r)}\le C  \frac {V(x)} {V(r)} ,\quad  0<x<r.$$

\end{thm}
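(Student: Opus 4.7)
The approach is to reduce the boundary Harnack principle to a pointwise comparison of Poisson kernels by means of the regular harmonicity of $h$, then verify that comparison using the Green function and Poisson kernel estimates established in the preceding sections, together with the exit time estimates of Proposition \ref{exptime}.

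Since $h\ge 0$ is regular harmonic on $(0,2r)$ and vanishes on $(-2r,0)$, exits landing in $(-2r,0)$ contribute nothing, so
$$h(y) = \int_{|z|\ge 2r} h(z)\, P_{(0,2r)}(y,z)\, dz, \qquad y\in (0,2r).$$
Evaluating this at $y=x$ and $y=r$ reduces the theorem to the pointwise comparison
$$P_{(0,2r)}(x,z) \approx \frac{V(x)}{V(r)}\, P_{(0,2r)}(r,z), \qquad 0<x<r,\ |z|\ge 2r,$$
with the extra $e^{\pm 2r}$ correction when $\alpha=2$, coming from the exponential decay of $\nu$ in formula (\ref{levymeasure2}).

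I would attack this comparison through the Ikeda--Watanabe representation $P_{(0,2r)}(y,z)=\int_0^{2r}G_{(0,2r)}(y,w)\nu(z-w)\,dw$, split according to the size of $|z|$. In the regime $|z|\ge 4r$ the L\'evy density satisfies $\nu(z-w)\approx \nu(z)$ uniformly for $w\in(0,2r)$ (with an $e^{\pm 2r}$ adjustment when $\alpha=2$), so $P_{(0,2r)}(y,z)\approx \nu(z)E^y\tau_{(0,2r)}$; Proposition \ref{exptime} then gives $E^x\tau_{(0,2r)}\approx V(x)V(r)$ and $E^r\tau_{(0,2r)}\approx V(r)^2$, producing the desired ratio $V(x)/V(r)$ at once. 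For the complementary boundary zone $2r\le|z|<4r$ a finer analysis is needed: the upper bound on $P_{(0,2r)}(x,z)$ is obtained from $G_{(0,2r)}(x,w)\le G_{(0,\infty)}(x,w)$ together with Lemma \ref{Greenformula} and the monotonicity of $V'$, which extract the factor $V(x)$ after splitting the $w$-integral into pieces near $0$, near $z$, and in between; the matching lower bound on $P_{(0,2r)}(r,z)$ comes from Lemma \ref{potential_lower} and the known boundary behaviour of the stable Green function. The transition between the half-line estimate and the interval estimate is controlled by Lemma \ref{exit_upper}, since $P^x(\tau_{(0,2r)}<\tau)\le V(x)/V(r)$ bounds the correction incurred when passing from $G_{(0,\infty)}$ to $G_{(0,2r)}$.

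The principal obstacle is the boundary zone $2r\le|z|<4r$: the Poisson kernel has a boundary singularity there, the simple factorization $P_{(0,2r)}(y,z)\approx \nu(z)E^y\tau_{(0,2r)}$ fails, and the boundary behaviour of $G_{(0,2r)}(x,\cdot)$ must be tracked more carefully through the estimates of Theorem \ref{Greenhalf} and Theorem \ref{PoissonKernel}. Once the pointwise Poisson kernel comparison is established on the whole of $\{|z|\ge 2r\}$, integration against $h(z)\,dz$ and division by $h(r)$ yield both directions of Theorem \ref{BHP} simultaneously.
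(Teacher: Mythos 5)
Your reduction to the pointwise comparison $P_{(0,2r)}(x,z)\approx \frac{V(x)}{V(r)}P_{(0,2r)}(r,z)$ is legitimate, and your treatment of the far zone is essentially the paper's: for $z\le -2r$ or $z\ge 4r$ one has $\nu(z-w)\approx\nu(z)$ uniformly in $w\in(0,2r)$ (up to $e^{\pm 2r}$ when $\alpha=2$), so the Poisson kernel factorizes as $\nu(z)E^y\tau$ and Proposition \ref{exptime} finishes; this is exactly Lemma \ref{PoissonExit1}. The gap is where you yourself locate it, in the zone $2r\le z<4r$, and your proposed repair does not close it. The comparison there is true, but it is itself a boundary Harnack statement (in $x$, for the harmonic functions $x\mapsto P_{(0,2r)}(x,z)$ with pole near the endpoint $2r$), and the crude sandwich you suggest cannot prove it: for $w$ close to $2r$ the upper bound $G_{(0,2r)}(x,w)\le G_{(0,\infty)}(x,w)\approx V(x)V(r)/(r\log(2+r^{-1}))$ does not decay as $w\uparrow 2r$, whereas the true $G_{(0,2r)}(x,w)$ vanishes like $V(2r-w)$; integrating against $\nu(z-w)\approx (z-w)^{-1}$ then produces an upper bound of order $\log\frac{r}{z-2r}$ where the correct size is of order $\log^{1/2}\frac{r}{z-2r}$, so the upper bound on $P_{(0,2r)}(x,z)$ and the lower bound on $P_{(0,2r)}(r,z)$ obtained from Lemma \ref{potential_lower} differ by a factor that is unbounded as $z\downarrow 2r$. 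Note also that you may not import Theorem \ref{Poissoninterval} to settle this zone: its proof uses the boundary Harnack principle, so that route is circular. Lemma \ref{exit_upper} controls the passage from $G_{(0,\infty)}$ to $G_{(0,2r)}$ only when both arguments stay away from the right endpoint (this is Lemma \ref{GRGinfty}), not for $w$ near $2r$.

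The paper avoids this zone entirely by stopping the process at a strictly smaller interval: it splits $h=h_1+h_2$ according to whether the exit position is far ($|X_\tau|>3r/2$, handled exactly as in your far zone via Lemma \ref{PoissonExit1}, giving $h_2(x)\approx h(r)V(x)/V(r)$ and in particular the lower bound since $h\ge h_2$) or lands in $[r,3r/2)$, which is still \emph{inside} $(0,2r)$; there the already proved scale-invariant Harnack inequality (Theorem \ref{Harnack}) gives $h(z)\le Ch(r)$, and the bound $P^x(\tau_{(0,\rho)}<\tau)\le V(x)/V(\rho)$ of Lemma \ref{exit_upper} yields $h_1(x)\le Ch(r)V(x)/V(r)$. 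If you want to salvage your scheme, replace the exit from $(0,2r)$ by an exit from such an intermediate interval so that the problematic near-boundary exit points fall in the harmonicity region, where Harnack rather than a sharp boundary Poisson kernel estimate does the work.
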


\begin{proof} We provide the proof for the case $0<\alpha<2$, only.
Let
$ h_2(x)=E^x\left[h(X_{\tau_{r}}), |X_{\tau_{r}}|>3/2r\right]$.
Note that by the Harnack inequality and Lemma \ref{PoissonExit1} we have  $ h_2(r/2)\approx h(r/2)\approx h(r)$. Moreover, by Lemma \ref{PoissonExit1}, we have
$$\frac {h_2(x)} {h_2(r)}\approx \frac {V(x)} {V(r)}.$$
Hence,
$$h_2(x)\approx h(r) \frac {V(x)} {V(r)}.$$
Next, by the Harnack inequality
$$ h_1(x)=E^x\left[h(X_{\tau_{r}}), r\le X_{\tau_{r}}<3/2r\right]\le C h(r)P^x(r<X_{\tau_{r}}<3/2r)\le C h(r) \frac {V(x)} {V(r)}. $$
This implies that
$$h(x)=h_1(x)+h_2(x)\approx h(r) \frac {V(x)} {V(r)} ,\quad  0<x<r.$$

\end{proof}

\section{Green function and Poisson kernel of the interval}

This section is devoted to extension of  the results of Section 4 to intervals. We show optimal estimates of the Green functions and Poisson kernels for intervals taking into account the size of intervals. Note that by passing to infinity with the length of intervals we recover the estimates from Section 4. This does not mean that the results of Section 4 can be obtained from the current section.  In fact, we strongly use the estimates for half-lines, showing that for some choice of variables and interval lengths the Green functions and Poisson kernels are comparable for intervals and intervals.

\begin{lem}\label{GRGinfty}

A) There exists a constant $a\leq 1/2$ such that, for $0<x,y\leq aR$,
$$G_{(0,R)}(x,y)\geq 1/2 G_{(0,\infty)}(x,y).$$

B) For any $0<a <1/2$ there is a constant $b<a/2$ such that, for $R\leq 4$, and $a/2R<x<y<(1-a/2)R$,
$$G_{(0,R)}(x,y)\geq 1/2 G_{(0,\infty)}(x,y),$$
if $|x-y|\leq bR$.
\end{lem}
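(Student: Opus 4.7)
The natural tool is the strong Markov property. Applying it at $\tau_{(0,R)}$ and using that $G_{(0,\infty)}(z,y)=0$ for $z\le 0$, one obtains the decomposition
\[
G_{(0,\infty)}(x,y)\;=\;G_{(0,R)}(x,y)+J(x,y),\qquad J(x,y):=E^{x}\!\left[G_{(0,\infty)}(X_{\tau_{(0,R)}},y);\,X_{\tau_{(0,R)}}\ge R\right].
\]
Both parts of the lemma reduce to proving $J(x,y)\le\frac{1}{2} G_{(0,\infty)}(x,y)$ under the respective hypotheses; the plan is in fact to show the stronger statement that $J(x,y)/G_{(0,\infty)}(x,y)$ can be made as small as we wish by shrinking $a$ (resp.\ $b$), so that $1/2$ is achieved once the parameter is small enough.

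To estimate $J$, I would invoke the Ikeda--Watanabe formula together with the trivial majorization $G_{(0,R)}(x,w)\le G_{(0,\infty)}(x,w)$:
\[
J(x,y)\;\le\;\int_{R}^{\infty}G_{(0,\infty)}(z,y)\left(\int_0^{R}G_{(0,\infty)}(x,w)\,\nu(z-w)\,dw\right)dz.
\]
Both Green-function factors and $\nu$ are then controlled by Theorem~\ref{Greenhalf} (or Remark~\ref{Greenhalflineasymp}) and Lemma~\ref{potential}, while the inner integral is handled with the help of Remark~\ref{GreenExit}, namely $\int_0^R G_{(0,\infty)}(x,w)\,dw\le V(x)V(R)$, and the radial monotonicity of~$\nu$.

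For Part~A, since $x,y\le aR\le R/2$, every $z\ge R$ satisfies $|z-y|\ge(1-a)R\ge R/2$, so $G_{(0,\infty)}(z,y)$ lies in a well-separated regime. I would split the inner integral at $w=R/2$: on $(0,R/2)$ we have $\nu(z-w)\le\nu(z/2)$, while on $(R/2,R)$ the Case~1 estimate from the proof of Theorem~\ref{Greenhalf} gives $G_{(0,\infty)}(x,w)\approx V(x)V(w)/w$ (recall $2x\le w$). Tracking the renewal-function and logarithm factors via Lemma~\ref{potential}, one arrives at $J(x,y)\le\eta(a)\,G_{(0,\infty)}(x,y)$ with $\eta(a)\to 0$ as $a\to 0^{+}$, and any $a$ with $\eta(a)\le 1/2$ works.

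Part~B is analogous, but now $aR/2<x<y<(1-a/2)R$ and $|x-y|\le bR<aR/2$, so $z-y\ge aR/2\ge 2|x-y|$ for every $z\ge R$. Hence $G_{(0,\infty)}(z,y)$ stays in a moderate regime, whereas $G_{(0,\infty)}(x,y)$ is bounded below by the near-diagonal factor $|x-y|^{-1}\log^{-2}(2+|x-y|^{-1})$, which diverges as $b\to 0$ thanks to $R\le 4$. The same Ikeda--Watanabe estimate then shows that $J(x,y)$ is of order $b$ times this near-diagonal factor, so choosing $b=b(a)<a/2$ small enough gives the claim. The main obstacle in both parts is the case analysis induced by Theorem~\ref{Greenhalf}, whose estimates break into distinct regimes according to whether $\alpha<1$, $\alpha=1$, $\alpha\in(1,2)$ or $\alpha=2$, and within each to the relative sizes of $x$, $y$, $|x-y|$ and $1$; a patient bookkeeping of logarithmic factors is required to verify that a single threshold for $a$ (resp.\ $b$) works uniformly across all regimes.
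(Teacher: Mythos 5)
Your decomposition $G_{(0,\infty)}=G_{(0,R)}+J$ with $J(x,y)=E^{x}[G_{(0,\infty)}(X_{\tau_{(0,R)}},y);X_{\tau_{(0,R)}}\ge R]$ is exactly the paper's starting point, but the way you propose to estimate $J$ contains a genuine gap. The majorization $G_{(0,R)}(x,w)\le G_{(0,\infty)}(x,w)$ inside the Ikeda--Watanabe density is too lossy near $w=R$: there $G_{(0,R)}(x,w)$ vanishes like $V(R-w)$ while $G_{(0,\infty)}(x,w)\approx V(x)V(R)/R$ stays bounded away from zero, and the singularity of $\nu$ at the origin concentrates the exit distribution precisely on $w\approx R$, $z\approx R$. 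Concretely, restricting your double integral to $w\in(R-1,R)$, $z\in(R,R+1)$ already gives a contribution $\gtrsim \frac{V(x)V(R)}{R}\,G_{(0,\infty)}(R,y)$, whereas the true value of $J$ is $\le \frac{V(x)}{V(R)}\,G_{(0,\infty)}(R,y)$; the two differ by the factor $V^{2}(R)/R\approx R^{\alpha-1}$ (up to logarithms), which is unbounded in $R$ for $\alpha>1$. Taking e.g. $x=1$, $y=aR$, your bound on $J/G_{(0,\infty)}(x,y)$ is $\gtrsim a\,R^{\alpha-1}$, so no single $a$ works for all $R$ in Part~A when $\alpha\in(1,2]$. (Part~B survives because $R\le 4$ keeps this loss bounded while $G_{(0,\infty)}(x,y)\to\infty$ as $b\to 0$, and Part~A would also go through for $\alpha\le 1$, but the lemma must hold for all $\alpha$.)

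The ingredient you are missing is the sharp exit-probability bound. The paper observes that $z\mapsto G_{(0,\infty)}(z,y)$ is decreasing on $(y,\infty)$, so on the event $\{X_{\tau_{(0,R)}}\ge R\}$ one has $G_{(0,\infty)}(X_{\tau_{(0,R)}},y)\le G_{(0,\infty)}(R,y)$ (note $y<R$), whence
\begin{equation*}
J(x,y)\le G_{(0,\infty)}(R,y)\,P^{x}\bigl(\tau_{(0,R)}<\tau\bigr)\le G_{(0,\infty)}(R,y)\,\frac{V(x)}{V(R)}
\end{equation*}
by Lemma~\ref{exit_upper}. This keeps the exact exit law (no substitution of Green functions inside Ikeda--Watanabe) and reduces both parts to comparing the single quantity $\frac{V(x)}{V(R)}G_{(0,\infty)}(R,y)$ with $G_{(0,\infty)}(x,y)$ via Theorem~\ref{Greenhalf}, yielding ratios of order $a\log(2+1/a)$ in Part~A and $\frac{b}{a}(1+\log\frac{a}{b})^{2}$ in Part~B. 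You should replace your Ikeda--Watanabe step by this monotonicity-plus-Lemma~\ref{exit_upper} argument; the remaining case analysis you describe is then essentially what the paper does.
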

\begin{proof} Throughout the whole proof we assume that $0<x<y$ and $a<1/2$.
Denote $\tau_R=\tau_{(0,R)}$ and observe that
\begin{eqnarray*}
G_{(0,R)}(x,y)&=&G_{(0,\infty)}(x,y)-E^xG_{(0,\infty)}(X_{\tau_{R}},y).
\end{eqnarray*}
Note that $G_{(0,\infty)}(z,y)$ is decreasing on $(y,\infty)$ as a function of $z$, which together with Lemma \ref{exit_upper} implies
\begin{equation}\label{fraction0}
E^xG_{(0,\infty)}(X_{\tau_{R}},y)\leq  G_{(0,\infty)}(R,y)\frac {V(x)}{V(R)}.\end{equation}

Observe that for $x<y$ we have
\begin{equation}\label{V}
\frac {V(x)V(y)} {1\wedge\frac{V(x)V(y)}{V^2(|y-x|)}}\le V^2(y).
\end{equation}



Suppose that  $x,y\le 2\wedge aR$. Then by Remark \ref{Greenhalflineasymp},

$$G_{(0,\infty)}(x,y)\approx \left(1\wedge\frac{V(x)V(y)}{V^2(|y-x|)}\right)|y-x|^{-1}\log^{-2}(1+|y-x|^{-1}).$$
 By (\ref{case1}), we have  $ G_{(0,\infty)}(R,y)\approx \frac {V(y)V(R)}{R\log(2+R^{-1})}$. Applying (\ref{V}) we obtain the following bound


\begin{eqnarray*}\frac{\frac{V(x)}{V(R)}{G}_{(0,\infty)}(R,y)}{G_{(0,\infty)}(x,y)}&\le& c  \frac{V^2(y)|y-x|\log^{2}(2+\frac{1}{|y-x|})}{R\log(2+R^{-1})}\\&\le&
 c  \frac{V^2(aR\wedge1) (aR\wedge1 )\log^{2}(2+\frac{1}{aR\wedge1})}{R\log(2+R^{-1})}.\end{eqnarray*}
%
%
%
%
Next,     by Lemma \ref{potential}, we infer that

$${V^2(aR\wedge1) \approx \log^{-1}(2+\frac{1}{aR\wedge1})},$$
which proves that

\begin{eqnarray}\frac{\frac{V(x)}{V(R)}{G}_{(0,\infty)}(R,y)}{G_{(0,\infty)}(x,y)}&\le&
 c  \frac{ (aR\wedge1 )\log(2+\frac{1}{aR\wedge1})}{R\log(2+R^{-1})}\nonumber\\
 &\le& c a \log(2+\frac{1}{a}).\label{fraction1}\end{eqnarray}


 Assume now that $x<1<2<y<aR$ or $1<x<y<aR$ . If $x<1<2<y<aR$, due to (\ref{case1}),  $G_{(0,\infty)}(x,y)\approx \frac{ V(x) V(y)}y$.  If $1<x<y<aR$ then

 $$G_{(0,\infty)}(x,y)\ge c\hat{G}^{(\alpha)}_{(0,\infty)}(x,y)\geq c \frac{V(x)V(y)}{y}.$$

By (\ref{case1}) we have  $G_{(0,\infty)}(y,R)\approx \frac{ V(y) V(R)}R$ which together with $G_{(0,\infty)}(x,y)\ge c\frac{V(x)V(y)}{y}$
 implies

\begin{equation}\label{fraction2}\frac{G_{(0,\infty)}(R,y)\frac {V(x)}{V(R)}}  { G_{(0,\infty)}(x,y)}\le c \frac {y}{R}\le c a.  \end{equation}

Combining (\ref{fraction0}, \ref{fraction1}, \ref{fraction2}) we infer that

$$E^xG_{(0,\infty)}(X_{\tau_{R}},y)\leq c a \log(2+\frac{1}{a}) G_{(0,\infty)}(x,y)\le (1/2)G_{(0,\infty)}(x,y)$$
for sufficiently small $a$, which completes the proof of the first part of the Lemma.

Now we proceed with the proof of  part $(B)$. Let $R\leq 4$ and $a/2R<x<y<(1-a/2)R$. Assume that $|x-y|\leq bR $. Let us observe that $V^2(y-x)\leq V^2(bR)\leq V^2(a/2R)\leq V(x)V(y)$. Then by Remark \ref{Greenhalflineasymp} we have
\begin{eqnarray*}
\frac{\frac{V(x)}{V(R)}G_{(0,\infty)}(R,y)}{G_{(0,\infty)}(x,y)}&\approx&\frac{\frac{V(x)}{V(R)}\left(1\wedge\frac{V(R)V(y)}{V^2(R-y)}\right)\frac{1}{(R-y)\log^2(1+(R-y)^{-1})}}{\left(1\wedge\frac{V(x)V(y)}{V^2(y-x)}\right)\frac{1}{(y-x)\log^2(1+(y-x)^{-1})}}\\
&\leq& \frac{\frac{1}{(R-y)\log^2(1+(R-y)^{-1})}}{\frac{1}{(y-x)\log^2(1+(y-x)^{-1})}}\leq c\frac{bR\log^2\left(1+\frac{1}{bR}\right)}{aR\log^2\left(1+\frac{1}{aR}\right)}\\&\leq&c \frac{b}{a}\left(1+\log\frac{a}{b}\right)^2.
\end{eqnarray*}
 Hence $$E^xG_{(0,\infty)}(X_{\tau_{R}},y)\le (1/2)G_{(0,\infty)}(x,y)$$
for sufficiently small $b$.

\end{proof}

Standard arguments imply the estimates of the Green function of the interval $(0, R), \ R>0$ if $R$ is bounded by a fixed positive number $R_0$. In the theorem below we choose $R_0=4$ as an upper bound for $R$, however we could chose any positive number at the expense of the comparability constant.
\begin{thm}\label{GreenSmallInterval}
Let $R< 4$ then we have,
$$G_{(0,R)}(x,y)\approx \left(1\wedge\frac{V(\delta_R(x))V(\delta_R(y))}{V^2(|y-x|)}\right)|y-x|^{-1}\log^{-2}(1+|y-x|^{-1})
    .$$
		\end{thm}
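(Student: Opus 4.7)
The plan is to prove matching upper and lower bounds on $G_{(0,R)}(x,y)$ for $R\le 4$, exploiting the half-line estimates (Remark \ref{Greenhalflineasymp}) and the comparison Lemma \ref{GRGinfty}.

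For the upper bound, the starting point is domain monotonicity combined with reflection across $R/2$: since $(0,R)\subset(0,\infty)$ and $(0,R)\subset(-\infty,R)$,
\[
G_{(0,R)}(x,y) \le \min\bigl\{G_{(0,\infty)}(x,y),\; G_{(0,\infty)}(R-x,R-y)\bigr\}.
\]
Because $R\le 4$, Remark \ref{Greenhalflineasymp} matches each of the two half-line terms with the desired form, but featuring $V(x)V(y)$ and $V(R-x)V(R-y)$ respectively. When $x,y$ lie on the same side of $R/2$, one of the two expressions already equals $V(\delta_R(x))V(\delta_R(y))$, so the bound is immediate. The delicate case is $x\le R/2 \le y$, where $V(\delta_R(x))V(\delta_R(y))=V(x)V(R-y)$ is strictly smaller than either of the two half-line bounds. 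Here I would invoke the strong Markov property at $T=\tau_{(0,R/2)}$. Since the process cannot reach $y\in(R/2,R)$ while confined to $(0,R/2)$, one has the identity
\[
G_{(0,R)}(x,y) = E^x\bigl[G_{(0,R)}(X_T,y);\, T<\tau_R\bigr],
\]
with $X_T\in[R/2,R)$ on the event $T<\tau_R$. The integrand is then in the "same-side" regime, estimated by the bound involving $V(R-X_T)V(R-y)$. Integrating against the distribution of $X_T$, which via Ikeda--Watanabe for $(0,R/2)$ and Lemma \ref{exit_upper} supplies a factor comparable to $V(x)/V(R)$, produces the missing $V(\delta_R(x))$.

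For the lower bound I would apply Lemma \ref{GRGinfty} directly in its regime of validity. Part (A) handles $0<x,y\le aR$ and, by reflection about $R/2$, also $x,y\ge(1-a)R$; in both subcases Lemma \ref{GRGinfty} gives $G_{(0,R)}(x,y)\ge \tfrac12 G_{(0,\infty)}(x,y)$, and Remark \ref{Greenhalflineasymp} matches this to the claimed form since $\delta_R(\cdot)$ coincides with the distance to the near endpoint. Part (B) handles $aR/2<x<y<(1-a/2)R$ with $|x-y|\le bR$; here $\delta_R(x),\delta_R(y)\ge aR/2\ge|x-y|$ so the factor $1\wedge V(\delta_R(x))V(\delta_R(y))/V^2(|x-y|)$ equals $1$ on the target side, and $G_{(0,\infty)}(x,y)\approx |x-y|^{-1}\log^{-2}(1+|x-y|^{-1})$ matches on the nose. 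For configurations not directly covered (one point close to a boundary, the other in the interior or near the opposite boundary), I would propagate the bounds via the boundary Harnack principle (Theorem \ref{BHP}) and the scale-invariant Harnack inequality (Theorem \ref{Harnack}); for instance, if $x$ is near $0$ and $y$ at moderate distance from $\partial(0,R)$, then $z\mapsto G_{(0,R)}(z,y)$ is regular harmonic in a small left-interval around $x$ and vanishes on the negative axis, so Theorem \ref{BHP} yields $G_{(0,R)}(x,y)\approx V(x)\,G_{(0,R)}(x_0,y)/V(x_0)$ for a reference $x_0$ that lies in Case (A) or (B).

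The main obstacle is the upper bound in the opposite-boundaries case, where neither half-line bound is sharp and one must extract the correct factor $V(\delta_R(x))V(\delta_R(y))$ rather than $V(x)V(y)$ or $V(R-x)V(R-y)$ through the first-passage decomposition and an Ikeda--Watanabe computation on $(0,R/2)$. All other cases reduce cleanly to Remark \ref{Greenhalflineasymp} and Lemma \ref{GRGinfty}, possibly with a single application of Theorem \ref{BHP} to bridge to interior reference points.
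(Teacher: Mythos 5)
Your skeleton is essentially the paper's: Lemma \ref{GRGinfty} combined with domain monotonicity and Remark \ref{Greenhalflineasymp} handles the near-diagonal and same-boundary configurations, and the boundary Harnack principle propagates the estimate when one or both points sit near (possibly opposite) endpoints. The one place you diverge is the upper bound in the opposite-boundaries case, which you single out as the main obstacle and propose to attack by a strong Markov decomposition at $\tau_{(0,R/2)}$ plus an Ikeda--Watanabe computation. This detour is both unnecessary and, as sketched, incomplete. It is unnecessary because Theorem \ref{BHP} applied to $z\mapsto G_{(0,R)}(z,y)$ (regular harmonic in $(0,aR)$ when $y>aR$, vanishing on the negative half-line, with the $e^{\pm 2r}$ factors harmless since $R<4$) gives the \emph{two-sided} comparison $G_{(0,R)}(x,y)\approx V(x)G_{(0,R)}(aR/2,y)/V(aR/2)$, so the upper bound comes for free together with the lower bound you already planned to obtain this way; this is exactly what the paper does. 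It is incomplete because in the identity $G_{(0,R)}(x,y)=E^x[G_{(0,R)}(X_T,y);\,T<\tau_R]$ the integrand is not uniformly controlled: when $y$ is close to $R/2$, the exit position $X_T$ lands near $y$ with non-negligible probability and $G_{(0,R)}(X_T,y)$ blows up logarithmically there, so you cannot simply multiply a pointwise bound by $P^x(T<\tau)\le V(x)/V(R/2)$ from Lemma \ref{exit_upper}; you would need the full joint estimate of the exit distribution, i.e.\ essentially the Poisson kernel computation of Theorem \ref{PoissonKernel} redone for $(0,R/2)$. One further small point: the configuration where both points are at distance $\gtrsim aR$ from the boundary but $|x-y|>bR$ is covered by neither part of Lemma \ref{GRGinfty} nor by Theorem \ref{BHP}; there the paper chains the scale-invariant Harnack inequality from $(x,y)$ to $(y-bR,y)$, and you should make that step explicit rather than leaving it under the blanket phrase ``propagate the bounds.''
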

\begin{proof}
If $x,y<aR$ then, by Theorem \ref{Greenhalf} and  Lemma \ref{GRGinfty} we
get $$G_{(0,R)}(x,y)\approx G_{(0,\infty)}(x,y)\approx  \left(1\wedge\frac{V(x)V(y)}{V^2(|y-x|)}\right)|y-x|^{-1}\log^{-2}(1+|y-x|^{-1}).$$
By symmetry we have, for $x,y>(1-a)R$,
$$G_{(0,R)}(x,y)\approx\left(1\wedge\frac{V(R-x)V(R-y)}{V^2(|y-x|)}\right)|y-x|^{-1}\log^{-2}(1+|y-x|^{-1}).$$
Let $a/2R<x<y<(1-a/2)R$. If $|x-y|\leq bR$  then again, by Lemma \ref{GRGinfty} and Theorem \ref{Greenhalf},
$$G_{(0,R)}(x,y)\approx |y-x|^{-1}\log^{-2}(1+|y-x|^{-1}).$$
If $R>|x-y|> bR$,  the Harnack inequality implies
$$G_{(0,R)}(x,y)\approx G_{(0,R)}(y-bR, y)\approx  |y-x|^{-1}\log^{-2}(1+|y-x|^{-1}).$$
For $x<a/2R$ and $y>aR$ we use the boundary Harnack principle to get
$$G_{(0,R)}(x,y)\approx \frac{V(x)}{V(a/2R)}G_{(0,R)}(a/2R,y).$$
If $y>(1-a/2)R$ we again use the boundary Harnack principle
$$G_{(0,R)}(x,y)\approx \frac{V(x)}{V(a/2R)}G_{(0,R)}(a/2R,(1-a/2)R)\frac{V(y)}{V((1-a/2)R)}.$$
Hence $$G_{(0,R)}(x,y)\approx \frac{V(\delta_R(x))V(\delta_R(y))}{V^2(|y-x|)}|y-x|^{-1}\log^{-2}(1+|y-x|^{-1}).$$
\end{proof}

To extend the above  uniform bound to large intervals we  define a function $\hat{G}_{(0,R)}^{(\alpha)}(x,y),\ x,y \in (0,R)$, such that
\begin{equation*}
\hat{G}_{(0,R)}^{(\alpha)}(x,y)=
\left\{%
\begin{array}{ll}
        \min\left\{\frac{1}{|x-y|^{1-\alpha}},
    \frac{V(\delta_R(x))V(\delta_R(y))}{|x-y|}\right\}, & \hbox{$\alpha<1$,} \\
    \ln\left(1+\frac{V(\delta_R(x))V(\delta_R(y))}{|x-y|}\right), & \hbox{$\alpha=1$,} \\
        \min\left\{\frac{V(\delta_R(x))V(\delta_R(y))}{(\delta_R(x)\delta_R(y))^{1/2}},
    \frac{V(\delta_R(x))V(\delta_R(y))}{|x-y|}\right\}, & \hbox{$1<\alpha<2$,} \\
		\frac{(V(x)V(R-y))\wedge (V(R-x)V(y))}{R},&\hbox{$\alpha=2$.}
\end{array}%
\right.
\end{equation*}
\begin{thm}\label{GreenLargeInterval}
Let $R\ge 4$ and $x\le y$ then we have for $|x-y|\le 1$,
$$
G_{(0,R)}(x,y)\approx \min\{G_{(0,\infty)}(x,y),
G_{(0,\infty)}(R-x,R-y) \} $$ and for $|x-y|> 1$
$$
G_{(0,R)}(x,y)\approx \hat{G}_{(0,R)}^{(\alpha)}(x,y). $$
\end{thm}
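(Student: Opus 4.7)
The plan is to perform a case analysis on the positions of $x$ and $y$ inside $(0, R)$, using as building blocks the half-line estimates from Theorem \ref{Greenhalf} and Remark \ref{Greenhalflineasymp}, together with Lemma \ref{GRGinfty}, the Harnack inequality (Theorem \ref{Harnack}), and the boundary Harnack principle (Theorem \ref{BHP}). The overall scheme follows the proof of Theorem \ref{GreenSmallInterval}, but the regime $|x-y|>1$ calls for a more delicate treatment adapted to each range of $\alpha$.

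The upper bound in the $|x-y|\le 1$ statement is immediate from domain monotonicity: since $(0,R)\subset(0,\infty)$ we have $G_{(0,R)}(x,y)\le G_{(0,\infty)}(x,y)$, and the reflection $z\mapsto R-z$ (an isometry of the symmetric process) yields $G_{(0,R)}(x,y)\le G_{(0,\infty)}(R-x,R-y)$. For the matching lower bound, fix $a$ as in Lemma \ref{GRGinfty}(A). If $x,y\le aR$, Lemma \ref{GRGinfty}(A) gives $G_{(0,R)}(x,y)\ge \frac{1}{2}G_{(0,\infty)}(x,y)$, and Theorem \ref{Greenhalf} together with the monotonicity of $V$ shows that in this range $G_{(0,\infty)}(x,y)$ is itself the smaller of the two half-line Green functions (up to constants). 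The symmetric range $x,y\ge (1-a)R$ is handled by reflection. For configurations with $x$ or $y$ in the interior strip $(aR,(1-a)R)$, the combination of Lemma \ref{GRGinfty}(B) and Theorem \ref{Harnack} transfers the lower bound from a nearby interior point, where $G_{(0,R)}(x,y)$ is already comparable to $G_{(0,\infty)}(x,y)$.

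For $|x-y|>1$ the goal is $G_{(0,R)}(x,y)\approx \hat{G}_{(0,R)}^{(\alpha)}(x,y)$. I would split into subcases according to whether $\delta_R(x)$ and $\delta_R(y)$ are each small ($\le 1$) or large ($>1$). When both are large, Lemma \ref{GRGinfty}(A) reduces the estimate to a comparison with the stable Green function $G_{(0,R)}^{(\alpha)}(x,y)$, which matches $\hat{G}_{(0,R)}^{(\alpha)}(x,y)$ via Remark \ref{Greenhalflineasymp} and the equivalence (\ref{GcomphatG}); Theorem \ref{Harnack} then covers configurations with $|x-y|$ comparable to $R$. When $\delta_R(x)=x\le 1$, the function $z\mapsto G_{(0,R)}(z,y)$ is regular harmonic on a subinterval $(0,2r)$ with $2r<y$ and vanishes on $(-2r,0)$, so Theorem \ref{BHP} produces a factor $V(x)/V(r)$. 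An analogous factor $V(R-y)$ arises when $R-y\le 1$, applied at the other endpoint, possibly iteratively. Matching the resulting factored expression against the closed form of $\hat{G}_{(0,R)}^{(\alpha)}(x,y)$ completes the argument.

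The main obstacle is the $\alpha$-dependent bookkeeping in the definition of $\hat{G}_{(0,R)}^{(\alpha)}$: the algebraic form changes at $\alpha=1$ and $\alpha=2$, and for $\alpha=2$ the role of the stable Green function must be played by the explicit Brownian expression (with $V$ replacing $V^{(2)}$). A further subtlety is smooth matching of the two regimes at the transition $|x-y|=1$, together with the need to apply the boundary Harnack principle iteratively at both endpoints when each of $x$ and $R-y$ is small relative to $R$; one must check that the factors $V(x)/V(r)$ produced by Theorem \ref{BHP} combine correctly with the interior estimate coming from the half-line comparison to reproduce the closed form of $\hat{G}_{(0,R)}^{(\alpha)}$.
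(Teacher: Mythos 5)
Your overall architecture (domain monotonicity and symmetry for the upper bounds, half-line estimates plus the boundary Harnack principle at the endpoints) is the right one, but two steps would fail as written. First, for $|x-y|\le 1$ with $x,y$ in the interior strip you invoke Lemma \ref{GRGinfty}(B); that lemma is stated and proved only for $R\le 4$ (its proof uses the boundary form of $G_{(0,\infty)}$ from Remark \ref{Greenhalflineasymp}, valid only for arguments less than $4$), so it is unavailable in the present setting $R\ge 4$, and the Harnack inequality cannot substitute for it because it compares values at points a fixed distance apart and therefore cannot reproduce the singularity of $G_{(0,R)}(x,y)$ as $|x-y|\to 0$. The paper's fix is domain monotonicity down to the window $((x-2)\vee 0,(x+2)\wedge R)$, whose length is at most $4$, followed by Theorem \ref{GreenSmallInterval}; one then checks directly that the resulting expression is comparable to $G_{(0,\infty)}(x,y)$ for $x\le R/2$ and uses reflection otherwise.

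Second, for $|x-y|>1$ your lower bound covers only $x,y\le aR$ (via Lemma \ref{GRGinfty}(A)), the reflected range, and pairs with $|x-y|$ comparable to $R$ (via Harnack); this leaves uncovered the interior pairs with $1<|x-y|\ll R$, where $\hat G^{(\alpha)}_{(0,R)}(x,y)$ still depends nontrivially on $|x-y|$ and hence cannot be reached by a bounded Harnack chain from a known configuration. The missing ingredient is Lemma \ref{potential_lower}: $G_{(0,R)}(x,y)\ge G^{(\alpha)}_{(0,R)}(x,y)$, which, combined with the sharp stable-interval estimates recalled before \eqref{GcomphatG} and the comparison $V\approx V^{(\alpha)}$ on $[1/2,\infty)$, gives the two-sided bound $G_{(0,R)}(x,y)\approx\hat G^{(\alpha)}_{(0,R)}(x,y)$ for all $x,y\ge 1/2$ at once; the boundary Harnack principle then handles $x<1/2$ and $R-y<1/2$ essentially as you describe. (Note also that the paper does not carry out the $\alpha=2$ case here but refers to the method of \cite{GR3}.)
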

\begin{proof}
For $\alpha=2$ we can use similar methods to the proof of Theorem 6.4 in \cite{GR3}.
Therefore we assume that $\alpha<2$. By symmetry we have $G_{(0,R)}(x,y)=G_{(0,R)}(R-x,R-y)$ and we can assume that $x\leq y$. Hence $G_{(0,R)}(x,y)\leq \min\{G_{(0,\infty)}(x,y),
G_{(0,\infty)}(R-x,R-y) \}$. Let $|x-y|\leq 1$, and $x\leq R/2$. Then $\delta_R(y)\geq y/2$ and by Theorem \ref{GreenSmallInterval} we infer
\begin{eqnarray*}G_{(0,R)}(x,y)&\geq& G_{((x-2)\vee 0,(x+2)\vee R)}(x,y)\\&\approx& \left(1\wedge\frac{V(x\wedge2)V(y\wedge2)}{V^2(|x-y|)}\right)\frac{1}{|x-y|\log^2(1+\frac{1}{|x-y|})}.\end{eqnarray*}
Hence, Remark \ref{Greenhalflineasymp}, for $x\leq 1$, and Theorem \ref{Greenhalf}, Lemma \ref{potential_lower} and (\ref{GcomphatG}), for $x>1$, imply
$$G_{(0,R)}(x,y)\geq c G_{(0,\infty)}(x,y).$$
For $x>R/2$, we use symmetry to get $$G_{(0,R)}(x,y)\geq c G_{(0,\infty)}(R-x,R-y),$$
which proves, for $|x-y|\leq 1$,
$$ G_{(0,R)}(x,y)\approx \min\{G_{(0,\infty)}(x,y),
G_{(0,\infty)}(R-x,R-y).$$
Assume that $|x-y|>1$. Let us observe that, for $x,y\leq 3/4R$, we have $\hat{G}^{(\alpha)}_{(0,R)}(x,y)\approx \hat{G}^{(\alpha)}_{(0,\infty)}(x,y)$. Hence, by Remark \ref{Greenhalflineasymp}
$$G_{(0,R)}(x,y)\leq c \hat{G}_{(0,R)}^{(\alpha)}(x,y).$$
Lemma \ref{potential_lower} implies, for $x,y\geq1/2$,
\begin{equation}\label{GlarR1}G_{(0,R)}(x,y)\approx \hat{G}_{(0,R)}^{(\alpha)}(x,y).\end{equation}
If $x<1/2$ we use the boundary Harnack principle to get the above estimate. By symmetry, (\ref{GlarR1}) is true, for $x,y\geq R/4$, as well.
For $x<R/4$ and $y>3/4 R$ the boundary Harnack principle implies
$$G_{(0,R)}(x,y)\approx \frac{V(x)}{V(R/4)}G_{(0,R)}(R/4,3/4R)\frac{V(R-y)}{V(3/4R)}\approx  \hat{G}_{(0,R)}^{(\alpha)}(x,y).$$
\end{proof}

 Now, we prove estimates for the Poisson kernel of the interval $(0,R)$. By symmetry, $P_{(0,R)}(x,z)=P_{(0,R)}(R-x,R-z)$. Therefore it is enough to prove estimates for $z<0$ and $x\in(0,R)$.

\begin{thm}\label{Poissoninterval}Assume that $z<0<x<R$.

 For $0<\alpha \le 2$
and $x,|z|\leq 2\wedge R$  we have

$$P_{(0,R)}(x,z)\approx \frac{V(x)} {V(|z|)}\frac{V(R-x) }{V(R+|z|)}\frac1{(x-z) \log(2+\frac1{x-z})}. $$

For $0<\alpha<2$, when  $x> 2$ or $|z|> 2\wedge R$, we have
$$P_{(0,R)}(x,z)\approx\frac{V(x)V(R-x)}{V(|z|)V(|z|+R)}\frac{1}{x-z}.$$



For $\alpha=2$,
$$P_{(0,R)}(x,z)\approx \left\{
\begin{array}{ll}e^{-|z|}\frac{V(x\wedge1)V(R-x)}{RV(|z|)}, & \hbox{$R\ge 4$, $x>2$ or $|z|> 2$,} \\
   e^{-|z|}\frac{V(x)V(R-x) }{|z|}, & \hbox{$R\le 4$, $|z|\ge R $}. \\
    \end{array}%
\right.$$



\end{thm}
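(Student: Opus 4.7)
The plan is to apply the Ikeda--Watanabe formula
\[
P_{(0,R)}(x,z)=\int_0^R G_{(0,R)}(x,y)\,\nu(y-z)\,dy,
\]
feed in the Green function estimates of Theorems \ref{GreenSmallInterval} and \ref{GreenLargeInterval}, and use the L\'evy density asymptotics from Section 2. The factor $V(R-x)/V(R+|z|)$ which distinguishes the interval answer from the half-line answer of Theorem \ref{PoissonKernel} should emerge precisely from the $V(\delta_R(\,\cdot\,))$ factor in the interval Green function, while the right-endpoint piece $V(R-x)$ is pinned down using the boundary Harnack principle (Theorem \ref{BHP}).

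For the small interval regime ($R\le 4$, $x,|z|\le 2\wedge R$) one has $y-z\le R+|z|\le 6$, so $\nu(y-z)\approx 1/(y-z)$ uniformly in $\alpha\in(0,2]$. Applying Theorem \ref{GreenSmallInterval} then yields
\[
P_{(0,R)}(x,z)\approx \int_0^R\!\Bigl(1\wedge\tfrac{V(\delta_R(x))V(\delta_R(y))}{V^2(|x-y|)}\Bigr)\frac{1}{|x-y|\log^{2}(1+|x-y|^{-1})}\frac{dy}{y-z}.
\]
I would then run the same three-zone decomposition used to prove Theorem \ref{PoissonKernel} (integrals $I_1, I_2, I_3$ on $y<x/2$, $|y-x|<x/2$, and $y>3x/2$), but now carrying $V(R-y)$ through every step. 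After integrating and collecting factors via the computations \eqref{PoissonR1}--\eqref{PoissonR3}, the output is exactly the half-line estimate multiplied by the boundary correction $V(R-x)/V(R+|z|)$. The subcase $\alpha=2$, $R\le 4$, $|z|\ge R$ is easier: $\nu(y-z)=e^{-(y-z)}/(y-z)\approx e^{-|z|}/|z|$ is essentially constant on $(0,R)$, so
\[
P_{(0,R)}(x,z)\approx \frac{e^{-|z|}}{|z|}\int_0^R G_{(0,R)}(x,y)\,dy=\frac{e^{-|z|}}{|z|}E^x\tau_{(0,R)}\approx \frac{e^{-|z|}}{|z|}V(x)V(R-x),
\]
where Proposition \ref{exptime} and the identity $V(\delta_R(x))V(R)\approx V(x)V(R-x)$ (valid since one of $x$, $R-x$ is $\ge R/2$) give the stated form.

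For large intervals ($R\ge 4$) I would first handle $\alpha<2$ with $x>2$ or $|z|>2$: Theorem \ref{GreenLargeInterval} gives $G_{(0,R)}(x,y)\approx \hat G^{(\alpha)}_{(0,R)}(x,y)$ on the bulk, while $\nu(y-z)\approx |y-z|^{-1-\alpha}$ for $|y-z|>1$ and $\nu(y-z)\approx 1/|y-z|$ for $|y-z|\le 1$. Following Cases 2--4 of the proof of Lemma \ref{PoissonHat}, but now treating $y$ on each half of the interval separately and invoking the symmetry $P_{(0,R)}(x,z)=P_{(0,R)}(R-x,R-z)$ to get the right-endpoint analogue, one obtains the upper bound $V(x)V(R-x)/[V(|z|)V(|z|+R)(x-z)]$. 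For the lower bound, applying the boundary Harnack principle (Theorem \ref{BHP}) to the regular harmonic function $x\mapsto P_{(0,R)}(x,z)$ reduces the question to evaluation at interior reference points $x_0\in\{R/4, 3R/4\}$, where one can compare with the half-line Poisson kernel of Theorem \ref{PoissonKernel}. The case $\alpha=2$, $R\ge 4$ uses the same strategy: the exponential decay of $\nu$ localises the integral near $y=0$ (where the left-boundary half-line behaviour applies), and the $V(R-x)/R$ factor is extracted from the boundary Harnack principle near $y=R$.

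The main obstacle is the careful book-keeping of the ratios $V(\delta_R(x))/V(x)$, $V(R+|z|)/V(|z|)$, and $V(\delta_R(y))/V(y)$, each of which can be either $\approx 1$ or far from $1$ depending on whether the argument lies in the boundary or interior regime; matching the final expression across all interlocking cases requires persistent use of monotonicity and subadditivity of $V$ together with Lemma \ref{potential}. A secondary difficulty, specific to $\alpha=2$, is the lack of scale invariance: the exponential prefactors $e^{-|z|}$ and $e^{-(y-z)}$ must be tracked through every split, and the threshold between the "half-line-like'' and "short interval'' formulas (encoded by $R\gtrless 4$, $|z|\gtrless R$) has to be reconciled with the values of $V$ near the origin, where $V(x)\to 1$ and $V'(x)$ behaves like $x^{-1}\log^{-3/2}(1/x)$.
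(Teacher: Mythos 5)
Your proposal is correct and follows the same overall architecture as the paper: Ikeda--Watanabe plus the Green function estimates of Theorems \ref{GreenSmallInterval} and \ref{GreenLargeInterval}, a Lemma \ref{PoissonHat}-style case analysis for large $R$, the observation that $\nu(y-z)$ is essentially constant on $(0,R)$ when $|z|\ge R$ (so that $P_{(0,R)}(x,z)\approx \nu(z)E^x\tau_{(0,R)}$ via Proposition \ref{exptime}), symmetry $P_{(0,R)}(x,z)=P_{(0,R)}(R-x,R-z)$, and the boundary Harnack principle to produce the $V(R-x)$ factor. The one tactical difference is in the core small-interval case: you propose to re-run the three-zone computation $I_1,I_2,I_3$ of Theorem \ref{PoissonKernel} carrying the extra factor $V(\delta_R(y))$ throughout, whereas the paper avoids this entirely by noting (i) the trivial upper bound $P_{(0,R)}(x,z)\le P_{(0,\infty)}(x,z)$ by domain monotonicity, and (ii) that by Remark \ref{Poissonasymp} the half-line integral is concentrated on $y\le 7R/8$, where $V(\delta_R(y))\approx V(y)$, so that $P_{(0,R)}(x,z)\approx P_{(0,\infty)}(x,z)$ for $x\le R/2$; the regime $x>R/2$ is then reached purely by BHP in the $x$-variable. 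Your route is more computational (and for $x>R/2$ you would in any case need to reduce to $x\le R/2$ first, since $\delta_R(x)=R-x$ changes the zone structure — you implicitly acknowledge this by invoking BHP for the right-endpoint factor), while the paper's comparison-with-the-half-line argument recycles Theorem \ref{PoissonKernel} wholesale; both yield the stated bounds.
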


\begin{proof} We present arguments only for $\alpha=2$,  since  the proof is similar to the proof of Theorem \ref{PoissonKernel}. Moreover, for  intervals of length not bigger than $R_0=4$ (the upper bound $4$ can be replaced by any $R_0$ at the expense of the comparability constant), the proof below is suitable for all $\alpha$'s, provided that $|z|\le R$.

We will use Theorems \ref{GreenSmallInterval} and \ref{GreenLargeInterval}, therefore first we will prove estimates for $R\leq4$ and next for $R> 4$.

We start with $R\leq 4$. Assume additionally that $-R/2<z<0$. Clearly 
$$P_{(0,R)}(x,z)
\leq P_{(0,\infty)}(x,z).$$
Note that $V(R-y)\approx V(y)$, for $R/2\leq y\leq 7/8R$. Therefore by Theorem \ref{GreenSmallInterval} and Remark \ref{Poissonasymp}, for $x\leq R/2$, we have
\begin{equation}\label{PoissonInt2}
P_{(0,R)}(x,z)\geq c\int^{7/8R}_0\left(1\wedge\frac{V(x)V(y)}{V^2(|y-x|)}\right)\frac{1}{|y-x|\log^{2}(1+|y-x|^{-1})}\frac{dy}{y-z}\geq c P_{(0,\infty)}(x,z),
\end{equation}
yielding \begin{equation}\label{PoissonInt3}P_{(0,R)}(x,z)\approx P_{(0,\infty)}(x,z).\end{equation}
For $x>R/2$, we have $x-z\approx R\approx R/2-z$ and $V(x)\approx V(R/2)$, hence by the boundary Harnack principle,  (\ref{PoissonInt3}) and Theorem \ref{PoissonKernel},
\begin{eqnarray*}
P_{(0,R)}(x,z)&\approx&P_{(0,R)}(R/2,z)\frac{V(R-x)}{V(R/2)}\\
&\approx&\frac{V(R)}{(x-z)\log(2+\frac1{x-z})}\frac{V(R-x)}{V(R/2)}\\
&\approx&\frac{V(R-x)}{(x-z)\log(2+\frac1{x-z})}.
\end{eqnarray*}
The last comparability, (\ref{PoissonInt3}) and Theorem \ref{PoissonKernel}  imply that

$$P_{(0,R)}(x,z)\approx \frac{V(x)} {V(|z|)}\frac{V(R-x) }{V(R+|z|)}\frac1{(x-z) \log(2+\frac1{x-z})}, \quad -R/2<z<0<x<R\le 4. $$
%
%
%
For $z<-R/2$, we have $\nu(|z|)\approx \nu(z+3/2R)$. Hence, by Lemma \ref{PoissonExit} and Proposition \ref{exptime} we get, for $z\leq-R/2$,  \begin{equation*}\label{PoissonInt1}P_{(0,R)}(x,z)\approx E^x\tau_{(0,R)}\nu(z)\approx {V(x)V(R-x)}{\nu(z)}.\end{equation*}
This ends the proof for $R\le 4$.

Assume that $R\geq 4$. If $-1<z<0<x<1$ then by (\ref{PoissonInt3}),
$$CP_{(0,\infty)}(x,z)\leq P_{(0,3)}(x,z)\leq P_{(0,R)}(x,z)\leq P_{(0,\infty)}(x,z),$$
which, by Theorem \ref{PoissonKernel},  yields
$$P_{(0,R)}(x,z)\approx \frac{V(x)} {V(|z|)}\frac{V(R-x) }{V(R+|z|)}\frac1{(x-z) \log(2+\frac1{x-z})}, \quad -1<z<0<x<1. $$
For $x\vee|z|\geq 1$ and $x\leq R/2$ we use the same arguments like in the proof  Lemma \ref{PoissonHat} to get
$$P_{(0,R)}(x,z)\geq c
       e^z\frac{V(x\wedge1)}{V(|z|)}\approx P_{(0,\infty)}(x,z).$$
 %
%
Hence, $$P_{(0,R)}(x,z)\approx P_{(0,\infty)}(x,z)\approx   e^{-|z|}\frac{V(x\wedge1)V(R-x)}{RV(|z|)}   .$$


Now, assume that $x> R/2$. Denote $W(x,z)=\int^R_0\hat{G}^{(2)}(x,y)\nu(y-z)dy$, then by (\ref{IntV3})
\begin{eqnarray*}
W(x,z)&=&\int^x_0\frac{V(R-x)V(y)}{R}\nu(y-z)dy+\int^R_x\frac{V(x)V(R-y)}{R}\nu(y-z)dy\\
&=&\frac{e^z}{R}\left(V(R-x)\int^x_0\frac{V(y)e^{-y}}{y-z}dy+V(x)\int^{R-x}_0\frac{V(y)e^{-R+y}}{R-y-z}dy \right)\\
&\approx&\frac{e^z}{R}\left(V(R-x)\frac{1}{V(|z|)}+V(x)e^{-x}\frac{(1\wedge(R-x))V(R-x)}{R-z} \right)\\
&\approx&e^z\frac{V(R-x)}{RV(|z|)}.
\end{eqnarray*}

Moreover
$$\int^{R\wedge(x+1)}_{x-1}\left(1\wedge\frac{V(R-x)V(R-y)}{V^2(|x-y|)}\right)\frac{\nu(y-z)dy}{|x-y|\log^2(1+|x-y|^{-1})}\leq c P_{(0,\infty)}(R-x,R-z)\leq c W(x,z).$$
Hence, by Theorem \ref{GreenLargeInterval} $$P_{(0,R)}(x,z)\approx e^z\frac{V(R-x)}{RV(|z|)}\approx   e^{-|z|}\frac{V(x\wedge1)V(R-x)}{RV(|z|)}.$$\end{proof}
The next result is an improvement of the Harnack inequality for $\alpha=2$, that was proved in the previous section.
\begin{thm}\label{Harnack_imp}
There exists a constant $C=C(\alpha)$  such that for any $r>0$ and any
nonnegative function $h$, harmonic in $(-2r, 2r)$, it holds, for  $0<\alpha\leq2$,
$$h(x)\leq C h(y) ,\qquad x,y\in (-r,r).$$
\end{thm}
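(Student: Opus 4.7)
The plan is to bootstrap Theorem~\ref{Harnack} using the sharp Poisson kernel estimates from Theorem~\ref{Poissoninterval}, which have only become available in the present section. For $0 < \alpha < 2$, Theorem~\ref{Harnack} (applied with, say, $p = 5/4$) already yields $h(x) \le C^2 h(y)$ with a constant independent of $r$. For $\alpha = 2$ and $0 < r \le 2$, the exponentials $e^{2r}, e^{5r/2}$ in Theorem~\ref{Harnack} are bounded by $e^9$, so the inequality holds with a universal constant. Hence the only case that needs a genuinely new argument is $\alpha = 2$ and $r > 2$, which I treat below.

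Fix $\alpha = 2$ and $r > 2$, and set $s = 3r/2$, so that $[-s, s] \subset (-2r, 2r)$. Since $h$ is harmonic in $(-2r, 2r)$ and the geometric $2$-stable process is a pure jump Lévy process, for every $x \in (-s, s)$,
\[
 h(x) = E^x h(X_{\tau_{(-s, s)}}) = \int_{|z| > s} P_{(-s, s)}(x, z)\, h(z)\, dz .
\]
It is therefore enough to show that
\[
 P_{(-s, s)}(x, z) \approx P_{(-s, s)}(y, z), \qquad x, y \in (-r, r),\ |z| > s ,
\]
with constants depending only on $\alpha$; the desired inequality then follows by integrating against the nonnegative function $h$.

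To establish the ratio bound, translate $(-s, s)$ to $(0, R)$ with $R = 2s = 3r \ge 6$. Under this shift, $x, y \in (-r, r)$ become $x', y' \in (r/2,\, 5r/2)$, and by symmetry we may assume $z' < 0$ with $|z'| > s = 3r/2 > 3$. Since $R \ge 4$ and $|z'| > 2$, we are in the first branch of the $\alpha = 2$ case of Theorem~\ref{Poissoninterval}, which gives
\[
 P_{(0,R)}(x', z') \approx e^{-|z'|}\, \frac{V(x' \wedge 1)\, V(R - x')}{R\, V(|z'|)} .
\]
Because $r > 2$, we have $x', y' \ge r/2 > 1$, so $V(x' \wedge 1) = V(y' \wedge 1) = V(1)$. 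The remaining $x'$-dependent factor $V(R - x')$ takes values in $[V(r/2),\, V(5r/2)]$, and subadditivity plus monotonicity of $V$ yield $V(5r/2) \le 5\, V(r/2)$; hence $V(R - x') \approx V(R - y')$ with a universal constant. Everything else in the formula cancels when we form the ratio $P_{(0, R)}(x', z') / P_{(0, R)}(y', z')$, which is therefore bounded above and below by constants depending only on $\alpha$.

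The main subtlety is arranging the parameter regime to fall into the clean, separable branch of Theorem~\ref{Poissoninterval}. The choices $s = 3r/2$ and threshold $r > 2$ are tuned precisely so that $R = 3r \ge 4$, $|z'| > 2$, and $x', y' \ge 1$ all hold simultaneously; this avoids the formula involving the logarithmic factor (which would still yield the result but require more careful bookkeeping of $(x' - z')$ against $(y' - z')$). With these separations in place, the ratio argument reduces to comparing two evaluations of $V$ on a bounded-ratio interval, which is immediate.
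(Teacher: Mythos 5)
Your overall strategy is exactly the paper's: reduce to $\alpha=2$ with $r$ large, represent $h$ by the Poisson kernel of $(-3r/2,3r/2)$, and show that this kernel is comparable for all starting points in $(-r,r)$ via Theorem \ref{Poissoninterval}. There is, however, a concrete error in your verification that you land in the branch of Theorem \ref{Poissoninterval} that you want. After translating $(-s,s)$ to $(0,R)$ by $w\mapsto w+s$, a point $z<-s$ becomes $z'=z+s<0$ with $|z'|=|z|-s$, which is merely positive and can be arbitrarily small; your assertion that $|z'|>s=3r/2>3$ is false. Consequently ``$|z'|>2$'' cannot be used to place yourself in the branch ``$R\ge 4$, $x>2$ or $|z|>2$''. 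The alternative hypothesis $x'>2$ also fails in your regime: you only assume $r>2$, so $x'$ can be as small as $r/2$, i.e.\ barely above $1$ when $2<r\le 4$. In that window, with $x'\le 2$ and $|z'|\le 2$, you fall into the logarithmic branch $x,|z|\le 2\wedge R$ of Theorem \ref{Poissoninterval} --- or worse, $x'$ and $y'$ may land in different branches --- and the clean cancellation argument does not apply as written.

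The repair is exactly the paper's choice of threshold: handle $r\le 4$ by the bounded-exponential observation (the factors $e^{2r}$, $e^{5r/2}$ from Theorem \ref{Harnack} are still universally bounded there) and run the Poisson-kernel argument only for $r>4$, where $x',y'>r/2>2$ guarantees the branch condition through the first alternative $x>2$, regardless of how small $|z'|$ is. With that change the rest of your argument is correct and matches the paper: $V(x'\wedge 1)=V(1)$, $V(R-x')\approx V(R-y')$ by monotonicity and subadditivity since both arguments lie in $(r/2,5r/2)$, the $z'$-dependent factors cancel in the ratio, and integrating against $h$ gives $h(x)\approx h(y)$.
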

\begin{proof}
By Theorem \ref{Harnack} it is enough to prove the scale invariant Harnack inequality for $\alpha=2$ when $r> 4$.
We use Theorem \ref{Poissoninterval} to get   $$P_{(-3/2r,3/2r)}(x,z)\approx  e^{-(|z|-3/2r)}\frac{1}{V(|z|-3/2r)} \approx P_{(-3/2r,3/2r)}(0,z),\quad  |x|<r, |z|>3r/2,$$
which yields
\begin{eqnarray*}h(x)&=&\int_{|z|>3/2r}P_{(-3/2r,3/2r)}(x,z)h(z)dz\approx \int_{|z|>3/2r}P_{(0,R)}(0,z)h(z)dz\\
&=&h(0).\end{eqnarray*}
Hence,  $h(x)\approx h(y)$, for any $x,y \in (-r,r)$.
\end{proof}

\end{document}